\newcommand{\inlineitem}[1][]{%
\ifnum\enit@type=\tw@
    {\descriptionlabel{#1}}
  \hspace{\labelsep}%
\else
  \ifnum\enit@type=\z@
       \refstepcounter{\@listctr}\fi
    \quad\@itemlabel\hspace{\labelsep}%
\fi}
\title[Line bundles on Contractions of $\overline{\rm{M}}_{0,n}$ via Coinvariant Divisors]{Line bundles on Contractions of $\overline{\rm{M}}_{0,n}$\\ via Coinvariant Divisors}
\subjclass[2020]{14H10, 17B69 (primary), 81R10 (secondary)}
\keywords{Moduli of curves, affine Lie algebras}
\author{Daebeom Choi}
\address{Department of Mathematics\\
    University of Pennsylvania\\
    Philadelphia, PA 19104-6395}
\email{dbchoi@sas.upenn.edu}
\date{\today}
\theoremstyle{definition}
\newtheorem{thm}{Theorem}[section]
\newtheorem{lem}[thm]{Lemma}
\newtheorem{prop}[thm]{Proposition}
\newtheorem{qes}[thm]{Question}
\newtheorem{defn}[thm]{Definition}
\newtheorem{eg}[thm]{Example}
\newtheorem{conj}[thm]{Conjecture}
\newtheorem{cor}[thm]{Corollary}
\newtheorem{rmk}[thm]{Remark}
\newcommand{\R}{\mathbb{R}}
\newcommand{\C}{\mathbb{C}}
\newcommand{\Z}{\mathbb{Z}}
\newcommand{\N}{\mathbb{N}}
\newcommand{\Q}{\mathbb{Q}}
\newcommand{\rank}{\text{rank}}
\newcommand{\M}[1]{\overline{\rm{M}}_{0,#1}}
\newcommand{\NE}[1]{\overline{\text{NE}}_1(#1)}
\DeclareSymbolFont{yhlargesymbols}{OMX}{yhex}{m}{n}
\DeclareMathAccent{\widetriangle}{\mathord}{yhlargesymbols}{"E6} 
\begin{document}

\begin{abstract}
    Using representations of affine Lie algebras, we describe line bundles on a broad class of contractions of $\M{n}$, the moduli space of stable $n$-pointed rational curves, and show a variant of the cone and contraction theorem for these morphisms. These include the celebrated constructions of Kapranov, Keel, and Knudsen. Our main result suggests that while many so-called F-curves are not $K_X$-negative, they exhibit behavior similar to $K_X$-negative curves. This reveals a distinguished property of Knudsen's construction $f_{\text{Knu}}:\M{n}\to \M{n-1}\times_{\M{n-2}}\M{n-1}$, allowing for the classification of all possible factorizations of $f_{\text{Knu}}$, as well as further applications.
\end{abstract}

\maketitle

\section{Introduction}\label{sec:intro}
The Deligne-Mumford moduli space $\overline{\rm{M}}_{g,n}$ of stable $n$-pointed curves of genus $g$ is a well-studied object in algebraic geometry, but its geometry remains a considerable challenge. Even in the case of $g=0$, where $\M{n}$ is smooth and rational, the problem remains difficult.

To study $\M{n}$, it is useful to construct it from a simpler variety $Y$ and to study the associated contraction $f_Y : \M{n} \to Y$. Three important examples of this are constructions $f_{\text{Kap}}:\M{n}\to \mathbb{P}^{n-3}$ considered by Kapranov in \cite{Ka92,Ka93}, $f_{\text{Knu}} : \M{n} \to \M{n-1} \times_{\M{n-2}} \M{n-1}$, defined by Knudsen in \cite{Knu83} and $f_{\text{Keel}}:\M{n}\to \M{n-1}\times \M{4}$, described by Keel in \cite{Ke92}. There exists a common generalization, $f_{S,T}:\M{n}\to \M{S}\times_{\M{S\cap T}}\M{T}$, of the last two. These are reviewed in \cref{subsec:cont}.

Here, we obtain new, generalizable results about such contractions of $\M{n}$ through the use of first Chern classes of vector bundles of coinvariants, derived from representations of affine Lie algebras, i.e. coinvariant divisors (cf. \cref{subsec:voa} and \cref{rmk:cdiv}). We focus on Mori theoretic aspects of $\M{n}$. 

Mori theory classifies morphisms $f:X\to Y$ through its relative cone of curves $\NE{f}$. In particular, if $\NE{f}$ is generated by $K_X$-negative curves, then $\text{Pic}(Y)$ is characterized by the subgroup of $\text{Pic}(X)$ that intersect trivially with $\NE{f}$ (cf. \cite[Lemma 3.2.5]{KMM87}). The main theorem of this paper proves that this holds for $f_{\text{Kap}}$, $f_{\text{Keel}}$, and $f_{\text{Knu}}$, and for the generalizations $f_{S,T}$.

\begin{thm}\label{thm:charcont}
    Let $f=f_{S,T}$ or $f_{\text{Kap}}$, and let $F$ be the set of F-curves contracted by them. Then a line bundle on $\M{n}$ is a pullback of a line bundle along $f$ if and only if its intersection with $F$ is trivial.
\end{thm}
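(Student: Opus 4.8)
The plan is to prove the two implications separately, with all the difficulty in the reverse direction. For the forward direction, suppose $L\cong f^{\ast}M$ for some $M\in\mathrm{Pic}(Y)$. Each $C\in F$ is contracted, so $f_{\ast}[C]=0$, and the projection formula gives $L\cdot C=f^{\ast}M\cdot C=M\cdot f_{\ast}[C]=0$; thus every pullback is trivial on $F$. Conversely, let $L$ be a line bundle with $L\cdot C=0$ for all $C\in F$. I would first reduce the desired conclusion to a numerical statement about the relative cone. Since $f$ is a contraction with $f_{\ast}\mathcal{O}_{\M n}=\mathcal{O}_Y$ and connected, rationally connected fibers, every numerically trivial line bundle on a fiber is trivial, so by the seesaw principle $L$ is a pullback as soon as it is numerically trivial on each fiber, i.e. as soon as $L\cdot C=0$ for \emph{all} curves $C$ contracted by $f$, not merely those in $F$. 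Hence the theorem reduces to the key claim that a divisor class trivial on every F-curve in $F$ is automatically trivial on the whole relative cone $\NE{f}$; equivalently, that the contracted F-curves in $F$ span the same subspace of $N_1(\M n)$ as $\NE{f}$ itself.

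This key claim is exactly where the failure of $K_X$-negativity becomes the central obstacle. The relevant face of $\NE{\M n}$ is not a $K_X$-negative extremal face, so \cite[Lemma 3.2.5]{KMM87} does not apply and the cone theorem cannot be invoked to guarantee that the curves in $F$ account for every contracted curve. The substitute I would use is the coinvariant divisors of \cref{subsec:voa}: these are nef and semiample, the morphism attached to such a divisor contracts precisely the curves on which it vanishes, and its intersection with an F-curve is given by an explicit combinatorial (fusion-rule) formula. The strategy is to produce, for the contraction $f$, coinvariant classes that vanish exactly along $F$ and are strictly positive on every other F-curve, so that they serve as supporting functions cutting out the face $\NE{f}$ along $F$. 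A coinvariant divisor then plays the role that $-K_X$ plays in the classical cone-and-contraction theorem --- this is the precise sense in which these non-$K_X$-negative F-curves ``behave like $K_X$-negative curves'' --- and establishing the requisite vanishing and positivity is the heart of the proof.

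Concretely, I would carry this out case by case using the descriptions in \cref{subsec:cont}. For $f_{S,T}:\M n\to\M S\times_{\M{S\cap T}}\M T$ the contracted F-curves are read off from the combinatorics of the pair $(S,T)$, and for $f_{\text{Kap}}:\M n\to\mathbb{P}^{n-3}$ they are the classical exceptional ones. In each case the argument has two layers: a rank count showing that the contracted F-curves span a subspace of $N_1(\M n)$ of dimension equal to the relative Picard number $\rho(\M n)-\rho(Y)$, together with the coinvariant-divisor certificate of the previous paragraph showing that no additional classes are contracted. Because $f^{\ast}$ is injective with image annihilating $\NE{f}$, matching these dimensions forces the subspace equality $F^{\perp}=f^{\ast}N^1(Y)$, and hence $L\cdot C=0$ for all contracted $C$; the seesaw reduction of the first paragraph then yields $L\cong f^{\ast}(f_{\ast}L)$. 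I expect the main obstacle to be precisely the construction and combinatorial evaluation of these supporting coinvariant divisors for each of $f_{S,T}$ and $f_{\text{Kap}}$, since this is the step at which $K_X$-negativity is unavailable and must be replaced.
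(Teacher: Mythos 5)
Your forward direction is fine, but the reverse direction has two genuine gaps. First, the ``seesaw'' reduction --- that $L$ is a pullback as soon as it is numerically trivial on every contracted curve --- is precisely the step that fails for contractions of non-$K_X$-negative faces; this is the content of \cite[Remark 3.2.6]{KMM87}, which the paper's introduction singles out as the phenomenon the theorem goes beyond. Numerical triviality on set-theoretic fibers does not control $L$ on formal neighborhoods of fibers (classically one needs relative Kawamata--Viehweg vanishing, i.e.\ $K_X$-negativity, for that), and for $f_{S,T}$ the target $\M{S}\times_{\M{S\cap T}}\M{T}$ is not $\mathbb{Q}$-factorial, so one cannot even form $f^\ast f_\ast L$ naively. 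The paper sidesteps descent entirely: it proves (\cref{thm:charKap}, \cref{thm:chargen}) that the group $F^\perp\subseteq\mathrm{Pic}(\M{n})$ equals $\Z\psi_n$ (for $f_{\text{Kap}}$), respectively the image of $\pi^{S,\ast}+\pi^{T,\ast}$ (for $f_{S,T}$), and these are \emph{manifestly} pullbacks --- $\psi_n=f_{\text{Kap}}^\ast\mathcal{O}(1)$ and $\pi^{S,\ast}L_1+\pi^{T,\ast}L_2=f_{S,T}^\ast i^\ast(L_1\boxtimes L_2)$ --- so no descent lemma is ever invoked.

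Second, your ``key claim'' (that $F$ spans the same subspace of $N_1$ as $\NE{f}$, certified by a ``rank count'') is the entire mathematical content of the theorem, and the mechanism you propose cannot establish it. Nef coinvariant divisors vanishing on $F$ and positive on the other F-curves show that $F$ lies on a face of the F-nef-dual cone, but they give no lower bound on $\dim\,\mathrm{span}(F)$, equivalently no upper bound on $\dim F^\perp$; note the paper explicitly remarks that determining $\NE{f_{\text{Kap}}}$ is ``only slightly less difficult than proving the F-conjecture,'' and the spanning statement for $F_{\text{Kap}}$ (\cref{cor:curveKap}) is deduced \emph{from} \cref{thm:charKap}, not used to prove it. The actual computation of $F^\perp$ is the linear algebra in the $\mathfrak{sl}_2$ level-$1$ coinvariant basis of \cref{thm:basis}: the coefficient recursions in the proof of \cref{thm:charKap}, and the group-ring argument of \cref{lem:linalg} feeding into \cref{lem:Qknu} and \cref{thm:chargenQ}. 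This relies on the factorization and propagation-of-vacua formulas making the restriction maps $s_i^\ast$ and $\pi_i^\ast$ diagonal in that basis --- a use of coinvariant divisors quite different from the ``nef supporting function'' role you assign them (that role does appear in the paper, but for \cref{thm:Knumori}, not here). Finally, even granting the rational statement, the theorem concerns integral line bundles, and passing from $\mathrm{Pic}_\Q$ to $\mathrm{Pic}$ requires the torsion-freeness arguments of \cref{lem:QtoZ}, \cref{thm:dual}, and the cokernel analysis in \cref{thm:charproj} and \cref{thm:chargen}, none of which your outline addresses.
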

The notion of F-curves and the explicit set of F-curves contracted by these morphisms are described in \cref{subsec:fconj} and \cref{subsec:cont}. Note also that \cref{thm:charcont} holds in arbitrary characteristic.

Note that some instances of \cref{thm:charcont} already appear in the literature. The statements for $f_{\text{Kap}}$ and $f_{\text{Keel}}$ are special cases of a theorem by V. Alexeev (see \cite[Proposition 4.6]{Fak12}). Moreover, in characteristic zero, the case of $f_{\text{Knu}}$ can be proved using the contraction theorem. However, our method offers certain advantages over existing approaches, which we will discuss in \cref{subsec:nec}. 
\cref{thm:charcont} follows from a combination of \cref{thm:charKap}, \cref{thm:charKnu}, and \cref{thm:chargen}. \cref{thm:charcont} is related to the semiampleness of certain line bundles, as explained in \cref{prop:nefsem}.

As a first application, it yields a concrete description of $\text{Pic}\left(\M{S} \times_{\M{S \cap T}} \M{T} \right)$, revealing a distinguished and non-obvious property of the double fiber product (cf. \cref{eg:triple}). As another application, \cref{thm:charcont} provides new insight into the relative cone of curves, as shown in \cref{thm:Knuind} for the case of $f_{\text{Knu}}$. In particular, this facilitates the classification of those contractions of $\M{n}$ whose relative cone of curves is contained in $\NE{f_{\text{Knu}}}$, as presented in \cref{thm:Knumori} and \cref{thm:Knugeo}, thereby refining Knudsen’s construction. A key feature of $f_{\text{Knu}}$ is that $\NE{f_{\text{Knu}}}$ is simplicial (\cref{prop:kkne}) and supports a full exact sequence (\cref{thm:charKnu}). These properties underpin the arguments in \cref{subsec:Mori} and also support the inductive approach of \cref{subsec:app} (cf. \cref{rmk:Knu}).

A detailed explanation of the connection demands additional context and background on Mori theory. A crucial tool in Mori theory is the closed cone of curves. The F-conjecture asserts that $\NE{\overline{\rm{M}}_{g,n}}$ is the convex hull of finitely many special curves, called F-curves. We refer to \cref{subsec:fconj}  for a detailed description of the F-conjecture. Some progress has been made on the F-conjecture \cite{KM13, GKM02, GF03, Gib09, Fe15, MS19, Fe20}. To illustrate the importance of $\M{n}$, we note that by \cite[Theorem 0.7]{GKM02} the F-conjecture for $\M{n}$ would imply its validity for $\overline{\rm{M}}_{g,m}$ for $g+m\le n$.

The cone of curves encodes information about  contractions of a space. The rigidity theorem asserts that any contraction $f$ is characterized by its relative closed cone of curves $\NE{f}$. Thus, if the F-conjecture is true, any contraction of $\M{n}$ is determined by the F-curves it contracts. Furthermore, by the contraction theorem, for a projective smooth variety $X$, and an extremal face $F$ of $\NE{X}$ consisting of $K_X$-negative curves, there exists a contraction $f: X \to Y$ onto a projective variety $Y$, where $\NE{f} = F$. Moreover, the image of the inclusion $f^\ast:\text{Pic}(Y)\to \text{Pic}(X)$ is $\NE{f}^\perp$, i.e., the line bundles intersecting $\NE{f}$ trivially by \cite[Lemma 3.2.5]{KMM87}. However, this description is not guaranteed if $F$ contains $K_X$-nonnegative curves, as discussed in \cite[Remark 3.2.6]{KMM87}.


When applied to this setting, \cref{thm:charcont} appears to capture certain phenomena that may go beyond the scope of \cite[Lemma 3.2.5]{KMM87}. First, we only need F-curves, rather than the entire relative cone of curves to obtain the conclusion. In particular, \cref{thm:charcont} may be considered as evidence for the F-conjecture. Secondly, as previously noted, \cref{thm:charcont} reveals that $f_{\text{Kap}}$, $f_{\text{Keel}}$, $f_{\text{Knu}}$ and the more general $f_{S,T}$ satisfy this description even though their cones of contracted curves contain $K_{\M{n}}$-non-negative curves. Notably, among the F-curves, only those of the form $F(I,J,K,L)$ with $|I|=|J|=|K|=1$ are $K_{\M{n}}$-negative.

Moreover, \cref{thm:Knumori} shows the existence of contractions corresponding to every extremal face of $\NE{f_{\text{Knu}}}$, although there are $K_{\M{n}}$-trivial curves in $\NE{f_{\text{Knu}}}$. Hence, \cref{thm:charcont} and \cref{thm:Knumori} suggest that even though many curves in $\M{n}$ are not $K_{\M{n}}$-negative, they behave as if they were. This observation aligns with the F-conjecture. According to the cone theorem, the $K_X$-negative part of $\NE{X}$ is locally finitely generated, with extremal rays spanned by rational curves. The F-conjecture proposes that the same holds true for $\M{n}$, even though most of its curves are not $K_X$-negative.

The main tool in the proof of \cref{thm:charcont} is the use of coinvariant divisors. As will be explained in \cref{subsec:voa}, coinvariant divisors are a special class of line bundles on $\overline{\rm{M}}_{g,n}$ derived from modules over affine Lie algebras. These divisors behave functorially with respect to tautological maps, for instance satisfying propagation of vacua \cite[Proposition 2.2.3]{TUY89} (cf. \cref{thm:propvac}) and the factorization theorem \cite[Theorem 6.2.6]{TUY89} (cf. \cref{thm:factor}), making calculations easier. Furthermore, $\mathfrak{sl}_2$, level $1$ coinvariant divisors form a basis of $\text{Pic}\left(\M{n}\right)_\Q$ by \cite[Lemma 2.5]{Fak12} (cf. \cref{thm:basis}). Proving \cref{thm:charcont} involves solving linear equations in $\text{Pic}\left(\M{n}\right)_\Q$. The fact that the basis of $\text{Pic}\left(\overline{\rm{M}}_{0,n}\right)_\Q$ consists of coinvariant divisors makes such linear equations easier to solve since their restrictions to boundary strata can be easily calculated by applying propagation of vacua and factorization. There are other bases, such as \cite[Lemma 2]{GF03}, but proving \cref{thm:charcont} using them is a very challenging task since such bases are not stable under pullback along projection and clutching maps. Note that although the proof uses coinvariant divisors, which are currently only defined over $\C$, the theorem also holds over any base field since it is solely a statement on the Chow ring of $\M{n}$. See Remark \ref{rmk:charp} for details.

Now, we list some further consequences of \cref{thm:charcont}. One interesting corollary is the exact sequence \cref{thm:charKnu}, which reveals a new inductive structure on line bundles on $\overline{\rm{M}}_{0,n}$. For example, it can be used to investigate generators of the Picard group (\cref{thm:CDint}). Moreover, we have the following.

\begin{cor}\label{thm:extray}
    The $\psi$-classes, along with any of their pullbacks by projection maps, span extremal rays of the nef cone. Moreover, such divisors span extremal rays of the F-nef cone.
\end{cor}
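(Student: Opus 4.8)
The plan is to realize each such divisor as the pullback of $\mathcal{O}(1)$ under a Kapranov map, possibly precomposed with a projection, and then to run an extremality argument against the F-curves contracted by that morphism, using \cref{thm:charcont} to pin down the relevant orthogonal space. I would begin with a single $\psi$-class $\psi_i$ on $\M{n}$. Recall that $\psi_i = f_{\text{Kap}}^\ast \mathcal{O}(1)$ for the Kapranov map based at the $i$-th point, so $\psi_i$ is nef; let $F$ denote the set of F-curves it contracts. Since $\mathbb{P}^{n-3}$ has Picard rank one, $f_{\text{Kap}}^\ast \text{Pic}(\mathbb{P}^{n-3})_\Q = \Q\,\psi_i$, and by \cref{thm:charcont} this coincides with $F^\perp = \{D : D\cdot C = 0 \text{ for all } C \in F\}$. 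Now if $\psi_i = A + B$ with $A,B$ in the F-nef cone, then for each $C \in F$ we have $\psi_i \cdot C = 0$ while $A\cdot C, B\cdot C \ge 0$, forcing $A\cdot C = B\cdot C = 0$; hence $A,B \in F^\perp = \Q\,\psi_i$. As the F-nef cone meets the line $\Q\,\psi_i$ in the ray $\R_{\ge 0}\,\psi_i$, it follows that $A,B \in \R_{\ge 0}\,\psi_i$, so $\psi_i$ spans an extremal ray of the F-nef cone.

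For a pullback $\pi^\ast \psi_j$ along a projection $\pi : \M{n} \to \M{m}$, I would reduce to this base case. Writing $\pi^\ast \psi_j = A + B$ with $A,B$ F-nef, I first use that each F-curve $C$ contracted by $\pi$ satisfies $\pi^\ast\psi_j \cdot C = 0$, hence $A\cdot C = B\cdot C = 0$; since a projection onto $\M{m}$ is the instance $f_{S,S}$ of $f_{S,T}$ for $S$ the retained markings, \cref{thm:charcont} yields $A = \pi^\ast A'$ and $B = \pi^\ast B'$. Because $\pi$ admits sections and every F-curve on $\M{m}$ lifts to an F-curve on $\M{n}$ mapping to it with positive degree, both $A'$ and $B'$ are F-nef on $\M{m}$, and $\psi_j = A' + B'$ by injectivity of $\pi^\ast$. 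The base case then forces $A', B' \in \R_{\ge 0}\,\psi_j$, whence $A,B \in \R_{\ge 0}\,\pi^\ast\psi_j$. Finally, since the nef cone is contained in the F-nef cone and all these divisors are nef, extremality in the F-nef cone immediately implies extremality in the nef cone, disposing of the first assertion.

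The main obstacle I anticipate is organizational rather than deep. One must verify that the projection maps are genuinely covered by \cref{thm:charcont}, i.e.\ that a forgetful map arises as a legitimate $f_{S,T}$, and that both the contracted F-curves and the F-nef property descend and lift correctly along $\pi$. The key enabling input, that the orthogonal space $F^\perp$ is exactly one-dimensional, is immediate from \cref{thm:charcont} together with $\rho(\mathbb{P}^{n-3}) = 1$, so the only real care needed is the bookkeeping of which F-curves are contracted and the check that F-nefness is preserved under pushforward along a map admitting a section.
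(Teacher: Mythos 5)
Your proposal is correct and follows essentially the same route as the paper: the base case uses \cref{thm:charKap} (i.e.\ \cref{thm:charcont} for $f_{\text{Kap}}$) to identify $F_{\text{Kap}}^\perp$ with the line spanned by $\psi_i$, and the pullback case uses \cref{thm:chargen} for the projection together with the fact that F-curves on $\M{S}$ lift to F-curves on $\M{n}$, exactly as in the paper's \cref{cor:psiext} and \cref{cor:genext}. The only (immaterial) difference is that the paper phrases the reduction step by lifting a finite set of curves that cut out the extremal ray, whereas you decompose $\pi^\ast\psi_j = A+B$ and descend the summands; these are dual formulations of the same extremality argument.
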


\begin{cor}\label{thm:Knuind}
    $F_{\text{Knu}}$, described in \cref{subsec:cont}, is linearly independent, and $\NE{f_{\text{Knu}}}$ is the simplicial cone generated by $F_{\text{Knu}}$. In particular, the F-conjecture holds for the subspace generated by $F_{\text{Knu}}$.
\end{cor}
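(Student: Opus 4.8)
The plan is to derive the statement from the exact sequence of \cref{thm:charKnu} together with the simpliciality of $\NE{f_{\text{Knu}}}$ recorded in \cref{prop:kkne}. Throughout I work rationally, identifying $N^1(\M{n})_\Q$ with $\text{Pic}(\M{n})_\Q$ and writing $Y=\M{n-1}\times_{\M{n-2}}\M{n-1}$. Since every curve in $F_{\text{Knu}}$ is contracted by $f_{\text{Knu}}$ according to its description in \cref{subsec:cont}, we have $F_{\text{Knu}}\subseteq \NE{f_{\text{Knu}}}$, hence the inclusion of cones $\text{Cone}(F_{\text{Knu}})\subseteq \NE{f_{\text{Knu}}}$; the work is to promote this to an equality and to record linear independence.

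First I would establish linear independence, which I expect to fall out of \cref{thm:charKnu} essentially for free. That result exhibits $f_{\text{Knu}}^\ast\text{Pic}(Y)_\Q$ as the kernel of the intersection map $\text{Pic}(\M{n})_\Q\to \Q^{F_{\text{Knu}}}$, $L\mapsto (L\cdot C)_{C\in F_{\text{Knu}}}$, and the exact sequence being \emph{full} includes surjectivity of this map. Surjectivity forces the classes in $F_{\text{Knu}}$ to be linearly independent in $N_1(\M{n})_\Q$: a nontrivial relation $\sum_{C} a_C\,C = 0$ would constrain the image to the proper subspace $\{(x_C):\sum_C a_C x_C = 0\}$, contradicting surjectivity. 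Dualizing the equality $\NE{f_{\text{Knu}}}^\perp = F_{\text{Knu}}^\perp$, which follows from \cref{thm:charKnu} together with the always-valid inclusions $f_{\text{Knu}}^\ast\text{Pic}(Y)_\Q\subseteq \NE{f_{\text{Knu}}}^\perp\subseteq F_{\text{Knu}}^\perp$ (the second because $F_{\text{Knu}}\subseteq \NE{f_{\text{Knu}}}$), gives $\text{span}(F_{\text{Knu}}) = \text{span}(\NE{f_{\text{Knu}}})$. In particular $|F_{\text{Knu}}|=\dim\text{span}(F_{\text{Knu}})$ equals the relative Picard number $\rho(\M{n}/Y)=\dim\NE{f_{\text{Knu}}}=:m$.

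For the cone equality I would combine the previous paragraph with \cref{prop:kkne}. Now $\text{Cone}(F_{\text{Knu}})$ and $\NE{f_{\text{Knu}}}$ are both simplicial cones of the same dimension $m$ inside their common span, with $\text{Cone}(F_{\text{Knu}})\subseteq \NE{f_{\text{Knu}}}$. To conclude that they coincide it suffices to check that each of the $m$ extremal rays of the simplicial cone $\NE{f_{\text{Knu}}}$ is generated by a class in $F_{\text{Knu}}$; this is exactly the information packaged in the description of $F_{\text{Knu}}$ and in the simpliciality argument of \cref{prop:kkne}, where the contracted extremal curves are identified. Finally, the F-conjecture for the subspace $\text{span}(F_{\text{Knu}})$ is then formal: since $f_{\text{Knu}}$ is a contraction, $\NE{f_{\text{Knu}}}$ is the face of $\NE{\M{n}}$ exposed by $f_{\text{Knu}}^\ast H$ for $H$ ample on $Y$, so $\NE{\M{n}}\cap \text{span}(F_{\text{Knu}}) = \NE{f_{\text{Knu}}} = \text{Cone}(F_{\text{Knu}})$, which is precisely the assertion that the cone of curves, restricted to this subspace, is the convex hull of F-curves.

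The main obstacle is this last promotion, from an inclusion of equidimensional simplicial cones to their equality: linear independence and equality of spans alone do not force two nested simplicial cones to agree, so one genuinely needs to know that the extremal rays of $\NE{f_{\text{Knu}}}$ are spanned by F-curves in $F_{\text{Knu}}$ rather than by other contracted classes. I expect this to be handled by the explicit geometry of the Knudsen contraction, namely the matching of $|F_{\text{Knu}}|$ with $\rho(\M{n}/Y)$ and the identification of the contracted extremal curves in \cref{subsec:cont} and \cref{prop:kkne}, with all requisite intersection numbers among the F-curves computed via propagation of vacua and factorization exactly as in the proof of \cref{thm:charcont}.
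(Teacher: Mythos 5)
Your first paragraph (linear independence) is correct and matches the paper: surjectivity of the intersection pairing $c\colon \text{Pic}(\M{n})\to\Z^{F_{\text{Knu}}}$ from \cref{thm:charKnu} immediately forces the classes in $F_{\text{Knu}}$ to be linearly independent; this is the dual-basis argument the paper records as the ``alternative proof'' of \cref{cor:Knuind} via \cref{cor:knusln} (the paper's primary proof is a dimension count: $\mathrm{codim}\,\mathrm{im}(\pi_{n-1}^\ast+\pi_n^\ast)=2^{n-3}-1=|F_{\text{Knu}}|$). Your identification of the spans, and the final reduction of the F-conjecture statement to the cone equality via the face $\NE{\M{n}}\cap (f_{\text{Knu}}^\ast H)^\perp=\NE{f_{\text{Knu}}}$, are also fine.

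The cone equality, however, is where your argument has a genuine gap, and you essentially admit it in your last paragraph. You cannot invoke \cref{prop:kkne} here without circularity: \cref{prop:kkne} \emph{is} the statement that $\NE{f_{\text{Knu}}}$ is the simplicial cone on $F_{\text{Knu}}$, i.e. the very claim being proven; it does not independently supply a ``simplicial cone of the same dimension'' whose extremal rays you could then try to match with $F_{\text{Knu}}$. And the tools you do use — surjectivity of $c$ and equality of spans — are insufficient: they produce a dual basis $\{\mathcal{L}_C\}$ in $\text{Pic}(\M{n})_\Q$ with no positivity, so for $\gamma\in\NE{f_{\text{Knu}}}$ written (uniquely, by linear independence and the exact sequence for $A_1$) as $\gamma=\sum_C a_C\,C$, you have no control on the signs of $a_C=\mathcal{L}_C\cdot\gamma$. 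The missing idea is precisely \cref{cor:knusln}: for each $C\in F_{\text{Knu}}$ one constructs, as an $\mathfrak{sl}_m$ level~$1$ coinvariant divisor (with weights chosen via \cref{lem:partition}), a \emph{base point free}, hence nef, line bundle $\mathcal{L}_C$ with $\mathcal{L}_C\cdot C=1$ and $\mathcal{L}_C\cdot D=0$ for all other $D\in F_{\text{Knu}}$. Nefness then gives $a_C=\mathcal{L}_C\cdot\gamma\ge 0$ for every $\gamma\in\NE{\M{n}}\cap A_1(f_{\text{Knu}})$, which is exactly the promotion from inclusion to equality of cones. Your proposed substitute — ``explicit geometry of the Knudsen contraction'' and intersection numbers among F-curves — would not work directly: computing $\NE{f_{\text{Knu}}}$ from the curve side requires controlling limits of effective classes, which is the difficulty the paper explicitly dualizes away by producing these nef divisors.
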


Note that a direct proof of \cref{thm:Knuind} is difficult, as it involves handling limits of sums of effective curves, which are generally hard to control. Instead, we take a dual approach by studying the cone of nef line bundles. This perspective allows us to construct many elements using coinvariant divisors, as demonstrated in \cref{cor:knusln}.

\cref{thm:extray} and \cref{thm:Knuind} provide an improvement of similar results about extremal rays and linear independence, such as \cite[Theorem B, Proposition 4.1]{AGSS12} and \cite[Theorem 2.1, Theorem 6.1]{AGS14}. First, those results are about symmetrized F-curves and divisors, while symmetry is not assumed here. Second, the divisors in \cref{thm:extray} are tautological.
\cref{thm:extray} and \cref{thm:Knuind} follow from \cref{cor:Knuind}, \cref{cor:psiext}, and \cref{cor:genext}.

It's worth noting that this method of proof of \cref{thm:charcont} is not limited to $f_{S,T}$'s. This approach has been successfully applied to two distinct constructions, Kapranov's and $f_{S,T}$'s, suggesting its potential for characterizing line bundles on more general contractions of $\M{n}$. This versatility can be attributed to the simple nature of the $\mathfrak{sl}_2$ level 1 coinvariant divisors described earlier.

Note that we can prove some cases of main theorems using classical methods instead of coinvariant divisors. However, these methods cannot prove the full statement of \cref{thm:charcont} or \cref{thm:Knuind}. Moreover, coinvariant divisors offer a new algebraic perspective even when classical methods are applicable. We will discuss this in detail in \cref{subsec:nec}.

\cref{thm:Knuind} demonstrates that the relative closed cone of curves $\NE{f_{\text{Knu}}}$ is just the simplicial cone on $F_{\text{Knu}}$. Consequently, it is natural to ask whether describing the Mori theory of $\M{n}$ for $\NE{f_{\text{Knu}}}$, specifically associating contractions to each face, is applicable. Indeed, this is feasible by employing the coinvariant divisors.

\begin{thm}\label{thm:Knumori}
    We can associate:
    \begin{description}
        \item[Object] For any subset $A\subseteq F_{\text{Knu}}$, a projective variety $\overline{\mathrm{M}}_{0,n}^{\text{Knu}}(A)$.
        \item[Morphism] For any two subsets $A,B\subseteq F_{\text{Knu}}$ such that $A\subseteq B$, a birational contraction 
        \[f_{A,B}:\overline{\mathrm{M}}_{0,n}^{\text{Knu}}(A)\to \overline{\mathrm{M}}_{0,n}^{\text{Knu}}(B).\]
    \end{description}
    These associations satisfy:
    \begin{enumerate}
        \item $\overline{\mathrm{M}}_{0,n}^{\text{Knu}}(\emptyset)=\overline{\mathrm{M}}_{0,n}$, $\overline{\mathrm{M}}_{0,n}^{\text{Knu}}(F_{\text{Knu}})=\M{n}\times_{\M{n-1}}\M{n}$.
        \item $f_{A,B}$ is transitive, i.e. $f_{B,C}\circ f_{A,B}=f_{A,C}$.
        \item $f_{\emptyset, F_{\text{Knu}}}=f_{\text{Knu}}$.
        \item $\NE{f_{\emptyset, A}}=\NE{A}$, and $f_{\emptyset, A}$ is the unique contraction of $\overline{\mathrm{M}}_{0,n}$ with this property.
        \item $f_{\emptyset, A}^\ast\text{Pic}\left(\overline{\mathrm{M}}_{0,n}^{\text{Knu}}(A) \right)$ is the set of line bundles on $\overline{\mathrm{M}}_{0,n}$ which intersect with curves in $A$ trivially. In particular, $\text{Pic}\left(\overline{\mathrm{M}}_{0,n}^{\text{Knu}}(A)\right)$ is a free abelian group of rank $2^{n-1}-\binom{n}{2}-|A|-1$.
    \end{enumerate}
\end{thm}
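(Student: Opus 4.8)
The plan is to realize each face of the simplicial cone $\NE{f_{\text{Knu}}}$ as the relative cone of curves of a contraction cut out by an explicit semiample coinvariant divisor, and then to organize these contractions by the rigidity lemma. Write $F_{\text{Knu}}=\{C_1,\dots,C_r\}$; by \cref{thm:Knuind} and \cref{prop:kkne} these are linearly independent and $\NE{f_{\text{Knu}}}$ is the simplicial cone they span. For $A\subseteq F_{\text{Knu}}$ let $\NE{A}$ be the subcone generated by $A$. Since $\NE{f_{\text{Knu}}}$ is the relative cone of $f_{\text{Knu}}$ it is an extremal face of $\NE{\M{n}}$, and $\NE{A}$ is a face of a simplicial cone; as a face of a face is again a face, $\NE{A}$ is an extremal face of the whole cone of curves.

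First I would produce the supporting divisor. Using the coinvariant-divisor constructions of \cref{cor:knusln} together with the semiampleness of \cref{prop:nefsem}, I would exhibit for each $i$ a nef, semiample line bundle $N_i$ on $\M{n}$ with $N_i\cdot C_i>0$ and $N_i\cdot C_j=0$ for $j\neq i$; the relevant intersection numbers are computable by propagation of vacua and factorization. Fixing an ample $H$ on the target of $f_{\text{Knu}}$, set
\[
D_A \;=\; f_{\text{Knu}}^{\ast}H \;+\!\!\sum_{C_i\in F_{\text{Knu}}\setminus A}\!\! N_i ,
\]
which is semiample as a sum of semiample bundles. A class $C=\sum_k t_k C_k\in\NE{f_{\text{Knu}}}$ (so $t_k\ge 0$) has $D_A\cdot C=\sum_{C_i\notin A}t_i\,(N_i\cdot C_i)$, vanishing exactly when $t_i=0$ for all $C_i\notin A$, i.e. exactly on $\NE{A}$; and any $C\notin\NE{f_{\text{Knu}}}$ has $f_{\text{Knu}}^{\ast}H\cdot C>0$. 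Hence $D_A^{\perp}\cap\NE{\M{n}}=\NE{A}$. I then define $\overline{\mathrm{M}}_{0,n}^{\text{Knu}}(A)$ and $f_{\emptyset,A}$ to be the target and morphism of the semiample fibration of $D_A$: a contraction onto a normal projective variety contracting precisely the curves meeting $D_A$ trivially, so $\NE{f_{\emptyset,A}}=\NE{A}$. The rigidity lemma shows this is independent of the choices of $N_i$ and $H$ and gives the uniqueness in (4). Specializing, $A=\emptyset$ makes $D_A$ ample, so $f_{\emptyset,\emptyset}=\mathrm{id}$, giving (1); for $A=F_{\text{Knu}}$ we have $\NE{f_{\emptyset,F_{\text{Knu}}}}=\NE{f_{\text{Knu}}}$, so uniqueness identifies $f_{\emptyset,F_{\text{Knu}}}$ with $f_{\text{Knu}}$ and its target with the double fiber product, giving (3) and the rest of (1).

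For the morphisms, $A\subseteq B$ gives $\NE{A}\subseteq\NE{B}$, so every curve contracted by $f_{\emptyset,A}$ is contracted by $f_{\emptyset,B}$; the universal property of the contraction $f_{\emptyset,A}$ yields a unique $f_{A,B}$ with $f_{\emptyset,B}=f_{A,B}\circ f_{\emptyset,A}$, and uniqueness of such factorizations forces $f_{B,C}\circ f_{A,B}=f_{A,C}$, which is (2). Since $f_{\text{Knu}}=f_{\emptyset,F_{\text{Knu}}}$ is birational (as recalled in \cref{subsec:cont}) and factors through $f_{\emptyset,A}$, which has connected fibers, $f_{\emptyset,A}$ is generically finite of degree one, hence birational; consequently each $f_{A,B}$ is a birational contraction, completing (2)--(4).

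The main obstacle is (5). The inclusion $f_{\emptyset,A}^{\ast}\mathrm{Pic}(\overline{\mathrm{M}}_{0,n}^{\text{Knu}}(A))\subseteq\{L:L\cdot C=0\ \forall C\in A\}=:\mathrm{Pic}_A$ is immediate. For the reverse inclusion I would use the full exact sequence of \cref{thm:charKnu}: the divisors $N_i$ split it, giving $\mathrm{Pic}_A=f_{\text{Knu}}^{\ast}\mathrm{Pic}\oplus\bigoplus_{C_i\notin A}\Z N_i$. A class $f_{\text{Knu}}^{\ast}P$ descends because $f_{\text{Knu}}=f_{A,F_{\text{Knu}}}\circ f_{\emptyset,A}$, so everything reduces to descending each $N_i$ ($C_i\notin A$) along $f_{\emptyset,A}$. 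Now $N_i$ is numerically trivial on $\NE{A}=\NE{f_{\emptyset,A}}$, hence on every curve in a fiber; since the fibers are rational one expects $N_i$ to be trivial on each fiber and thus to descend as $f_{\emptyset,A}^{\ast}(f_{\emptyset,A\ast}N_i)$. The genuine difficulty is that, unlike the classical setting, the contracted curves need not be $K_{\M{n}}$-negative, so \cite[Lemma 3.2.5]{KMM87} does not apply and the passage from numerical triviality to integral descent (controlling torsion and non-reduced fibers) is not automatic; I would instead argue with coinvariant divisors, solving the defining linear equations in $\mathrm{Pic}(\M{n})_{\Q}$ exactly as in the proof of \cref{thm:charcont}. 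Freeness and the rank $2^{n-1}-\binom{n}{2}-|A|-1$ then follow since $\mathrm{Pic}(\M{n})$ is free of rank $2^{n-1}-\binom{n}{2}-1$ and the $|A|$ conditions $L\cdot C=0$ are independent by \cref{thm:Knuind}. I expect this descent step, rather than the cone-theoretic bookkeeping, to be the crux.
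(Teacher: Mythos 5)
Your overall architecture---using the base point free divisors of \cref{cor:knusln} to support each face $\NE{A}$, realizing $\overline{\mathrm{M}}_{0,n}^{\text{Knu}}(A)$ as the contraction attached to a semiample divisor, and organizing objects and morphisms by the rigidity lemma---matches the paper's proof of \cref{thm:secknumori}, which packages the same ingredients slightly differently: it takes the Stein factorization of $f_{\text{Knu}}\times\prod_{C\in A^c}\phi_{\mathcal{L}_C}$ mapping into $\left(\M{n-1}\times_{\M{n-2}}\M{n-1}\right)\times\prod_{C\in A^c}\mathbb{P}^{N_C}$, rather than a single divisor $D_A$. By the uniqueness you both prove, the two constructions agree. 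Two small remarks: \cref{prop:nefsem} is not the source of semiampleness of the $N_i$ (and invoking it here would be circular, since its hypothesis is essentially the conclusion of part (5)); \cref{cor:knusln} already gives base point freeness directly. Your computation $D_A^\perp\cap\NE{\M{n}}=\NE{A}$ and the deduction of (1)--(4) from it are correct.

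The genuine gap is in (5), and you have located it but not closed it. Proving that each $N_i$ with $C_i\notin A$ lies in $f_{\emptyset,A}^\ast\,\text{Pic}\left(\overline{\mathrm{M}}_{0,n}^{\text{Knu}}(A)\right)$ is a statement about descent along a specific morphism, not a numerical statement, so ``solving the defining linear equations in $\text{Pic}(\M{n})_\Q$ as in \cref{thm:charcont}'' cannot settle it: those equations only identify the group $\mathrm{Pic}_A$ of numerical candidates, which is exactly the inclusion you already have for free. The correct fix is already in your toolkit and requires no new input: $N_i$ is base point free, hence (after Stein factorization) defines its own contraction $\phi_i\colon\M{n}\to Y_i$ with $N_i=\phi_i^\ast\mathcal{O}_{Y_i}(1)$; since $N_i\cdot C_j=0$ for all $j\ne i$, the morphism $\phi_i$ contracts every curve in $\NE{A}=\NE{f_{\emptyset,A}}$, so by the rigidity lemma $\phi_i$ factors through $f_{\emptyset,A}$ and therefore $N_i\in f_{\emptyset,A}^\ast\,\text{Pic}$. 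This is precisely what the paper builds in from the start: because the target is constructed inside a product having $\mathbb{P}^{N_C}$ as a factor for each $C\in A^c$, the classes in $f_{\text{Knu}}^\ast\,\text{Pic}$ and each $\mathcal{L}_C$ tautologically descend, and \cref{thm:charKnu} together with \cref{cor:knusln} shows these generate all of $\mathrm{Pic}_A$---which is the same splitting $\mathrm{Pic}_A=f_{\text{Knu}}^\ast\,\text{Pic}\oplus\bigoplus_{C_i\notin A}\Z N_i$ you wrote down. With that one argument inserted, your proof is complete.
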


Note that \cref{thm:Knumori} holds for any characteristic. \cref{thm:Knumori} will be explained in \cref{subsec:Mori}. Since any subset $A\subseteq F_{\text{Knu}}$ corresponds to a face of the cone $\NE{F_{\text{Knu}}}=\NE{f_{\text{Knu}}}$, \cref{thm:Knumori} completely classifies the contraction of $\M{n}$ whose relative closed cone of curves is contained in $\NE{f_{\text{Knu}}}$. There is a question, as posed for instance in \cite[Question 1.0.2]{Fe15}, about whether every nef line bundle on $\M{n}$ is semiample. This question is related to the disproved Mori dream space conjecture \cite{HK00, CT15}. \cref{thm:Knumori} provides supporting evidence for this conjecture, as clarified by \cref{prop:nefsem}.

The role of coinvariant divisors in the proof of \cref{thm:Knumori} is to provide base point free line bundles that contract the specific F-curves we are interested in. Owing to the global generation of coinvariant divisors, as established in \cite{Fak12} (cf. \cref{thm:glogen}), these divisors are base point free. Furthermore, by applying the factorization theorem (\cref{thm:factor}), we can construct such divisors to have the desired intersections with specified F-curves. Note that there are other ways to prove \cref{thm:Knumori} without using coinvariant divisors. However, these methods cannot supersede the use of coinvariant divisors. We will discuss this in \cref{subsec:nec}.

The main point of \cref{thm:Knumori} is to prove the projectivity of varieties $\overline{\mathrm{M}}_{0,n}^{\text{Knu}}(A)$. Indeed, it is possible to construct such varieties even without the coinvariant divisors, as we will do in \cref{subsec:Mori}. However, constructing an ample line bundle on $\overline{\mathrm{M}}_{0,n}^{\text{Knu}}(A)$ is challenging, as explained in \cref{rmk:proj}, \cref{eg:segre} and \cref{eg:hironaka}. Nevertheless, as mentioned in the previous paragraph, coinvariant divisors that are carefully constructed can provide an ample line bundle on $\overline{\mathrm{M}}_{0,n}^{\text{Knu}}(A)$. We explore this in more detail in \cref{subsec:Mori}, especially in \cref{eg:hironaka}.

As mentioned, there is another method for constructing $\overline{\mathrm{M}}_{0,n}^{\text{Knu}}(A)$, that involves gluing together open subvarieties of $\M{n}$ and $\M{n}\times_{\M{n-1}}\M{n}$. Utilizing this method of construction, we can establish several theorems concerning the regularity of $\overline{\mathrm{M}}_{0,n}^{\text{Knu}}(A)$.

\begin{prop}\label{thm:Knugeo}
    For any $A\subseteq F_{\text{Knu}}$,
    \begin{enumerate}
        \item $\overline{\mathrm{M}}_{0,n}^{\text{Knu}}(A)$ is a local complete intersection.
        \item $\overline{\mathrm{M}}_{0,n}^{\text{Knu}}(A)$ is smooth if and only if $A\subseteq F_{\text{Knu}}^{1}$. If $A\not\subseteq F_{\text{Knu}}^{1}$, then $\overline{\mathrm{M}}_{0,n}^{\text{Knu}}(A)$ is not even $\mathbb{Q}$-factorial.
        \item $f_{A, A\cup [C]}$ is a divisorial contraction if and only if $C\in F_{\text{Knu}}^{1}$. Otherwise, this is a small contraction.
    \end{enumerate}
\end{prop}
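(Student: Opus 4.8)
The plan is to reduce all three assertions to a local computation at a single contracted F-curve. Using the gluing construction of $\overline{\mathrm{M}}_{0,n}^{\text{Knu}}(A)$ described above, the contraction $f_{\emptyset,A}$ is an isomorphism over $\M{n}\setminus\bigcup_{C\in A}C$, so $\overline{\mathrm{M}}_{0,n}^{\text{Knu}}(A)$ can fail to be regular only along the images of the curves of $A$. The curves in $F_{\text{Knu}}$ are moreover pairwise disjoint: writing $f_{\text{Knu}}=(\pi_n,\pi_{n-1})$ for the two maps forgetting the last two marked points, the image $f_{\text{Knu}}(F(I,J,\{n-1\},\{n\}))$ has a combinatorial type determined by the partition $\{I,J\}$, so distinct curves of $F_{\text{Knu}}$ have distinct images and, $f_{\text{Knu}}$ being a morphism that is an isomorphism away from $F_{\text{Knu}}$, cannot meet. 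Consequently the local structure of $\overline{\mathrm{M}}_{0,n}^{\text{Knu}}(A)$ near the image of $C\in A$ agrees with that of $\overline{\mathrm{M}}_{0,n}^{\text{Knu}}([C])$, and everything reduces to the two cases $C\in F_{\text{Knu}}^{1}$ and $C\notin F_{\text{Knu}}^{1}$.

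For the single-curve analysis I would first compute $N_{C/\M{n}}$ for $C=F(I,J,\{n-1\},\{n\})\cong\M{4}\cong\mathbb{P}^1$. A general member of $C$ has a central component carrying the markings $n-1$ and $n$ together with the tails $I$ and $J$; it therefore has exactly one node when one of $I,J$ is a singleton (the case $C\in F_{\text{Knu}}^{1}$) and exactly two nodes otherwise. Each node contributes a summand $\mathcal{O}(\delta)|_C$ of degree $C\cdot\delta=-1$ by the F-curve intersection formula, while the directions deforming the tails restrict trivially to $C$; thus $N_{C/\M{n}}\cong\mathcal{O}(-1)\oplus\mathcal{O}^{\oplus(n-5)}$ when $C\in F_{\text{Knu}}^{1}$ and $N_{C/\M{n}}\cong\mathcal{O}(-1)^{\oplus 2}\oplus\mathcal{O}^{\oplus(n-6)}$ otherwise, which by adjunction recovers $K_{\M{n}}\cdot C=-1$, respectively $0$. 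To make the contraction itself explicit I would use the local coordinates of the fiber product: near a node the forgetful map $\M{n-1}\to\M{n-2}$ has the standard form $t=xy$, so $\M{n-1}\times_{\M{n-2}}\M{n-1}$ is locally $\{x_1y_1=x_2y_2\}$ at a point where both factors are nodal and is smooth where only one factor is nodal. Hence contracting $C\in F_{\text{Knu}}^{1}$ is locally the blow-down of a smooth centre, with exceptional divisor the codimension-one family of such lines, whereas contracting $C\notin F_{\text{Knu}}^{1}$ is locally the small resolution of $\{x_1y_1=x_2y_2\}\times\mathbb{A}^{n-6}$, whose exceptional locus is the $(n-5)$-dimensional family of contracted lines, of codimension two.

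These models yield the three assertions. For (1), the smooth open piece of $\overline{\mathrm{M}}_{0,n}^{\text{Knu}}(A)$ coming from $\M{n}$ is a local complete intersection, and the complementary open piece is an open subvariety of $\M{n-1}\times_{\M{n-2}}\M{n-1}$, which is a local complete intersection because the universal curve $\M{n-1}\to\M{n-2}$ is a flat local complete intersection morphism and this property is preserved under base change; since being a local complete intersection is a local condition, the glued space inherits it. For (2), if $A\subseteq F_{\text{Knu}}^{1}$ every contracted curve has the smooth single-node model and $\overline{\mathrm{M}}_{0,n}^{\text{Knu}}(A)$ is smooth, whereas if $A\not\subseteq F_{\text{Knu}}^{1}$ I pick $C\in A\setminus F_{\text{Knu}}^{1}$ and set $f:=f_{\emptyset,A}$, which is small near $C$; choosing a divisor $D$ on the smooth space $\M{n}$ with $D\cdot C\neq 0$, were $f_\ast D$ $\mathbb{Q}$-Cartier near $f(C)$ then $f^\ast f_\ast D=D$ in codimension one would give $D\cdot C=f^\ast f_\ast D\cdot C=f_\ast D\cdot f_\ast C=0$ by the projection formula (since $f_\ast C=0$), a contradiction, so $\overline{\mathrm{M}}_{0,n}^{\text{Knu}}(A)$ is not $\mathbb{Q}$-factorial. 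For (3), since $C$ is disjoint from the curves of $A$ the space $\overline{\mathrm{M}}_{0,n}^{\text{Knu}}(A)$ is smooth along $C$, so $f_{A,A\cup[C]}$ is precisely the single-curve contraction above, divisorial exactly when $C\in F_{\text{Knu}}^{1}$ and small otherwise.

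The main obstacle I anticipate is upgrading the splitting type of the normal bundle to an honest identification of the analytic-local contraction, since the normal bundle alone does not determine the morphism; I would secure this through the explicit node-smoothing coordinates $t=xy$ of the fiber product, checking that the image point $f_{\text{Knu}}(C)$ is a both-nodal (hence ordinary double) point of $\M{n-1}\times_{\M{n-2}}\M{n-1}$ exactly when $C\notin F_{\text{Knu}}^{1}$. A secondary point is that the failure of $\mathbb{Q}$-factoriality is a global statement rather than a merely formal-local one; the projection-formula argument above is designed to cross this gap, and it relies on the contraction being genuinely small near $C$, for which the disjointness of the $F_{\text{Knu}}$-curves is essential.
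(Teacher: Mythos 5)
Your proposal is essentially correct and its skeleton matches the paper's proof of \cref{prop:strknumori}: both arguments rest on the fact that the loci $\mathrm{M}_C$ swept out by the curves of $F_{\text{Knu}}$ are pairwise disjoint, on the gluing description of $\overline{\mathrm{M}}_{0,n}^{\text{Knu}}(A)$ from $U_A$ and $f_{\text{Knu}}^{-1}(U_{A^c})$, and on the local structure of $\M{n-1}\times_{\M{n-2}}\M{n-1}$ along the $\mathrm{N}_C$'s. Where you genuinely diverge is in two places, and both of your alternatives are sound. For the lci property you base-change the universal curve (a flat lci morphism) rather than pulling back along the diagonal of $\M{n-2}\times\M{n-2}$ as in Step 1 of \cref{thm:gencont}; these are interchangeable. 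More substantively, for the failure of $\Q$-factoriality the paper runs a global rank count, comparing $\mathrm{rank}\,\mathrm{Pic}$ (which drops by $|A|$ by \cref{thm:secknumori}(5)) with $\mathrm{rank}\,\mathrm{A}_1$ (which drops by at most $|A\cap F_{\text{Knu}}^{1}|$), whereas you use the standard local argument that a contraction of a smooth variety which is small over a neighborhood of $f(C)$ cannot have $\Q$-factorial target, via $f^\ast f_\ast D = D$ in codimension one and the projection formula. Your argument is more local and self-contained; the paper's has the advantage of not needing to localize the smallness, but it consumes \cref{thm:secknumori}(5) as an input. Your local model $\{x_1y_1=x_2y_2\}$ at points where both added points sit at the same node is exactly Keel's description, which the paper simply cites.

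Two caveats. First, your whole argument presupposes the identification of $\overline{\mathrm{M}}_{0,n}^{\text{Knu}}(A)$ (defined in \cref{thm:secknumori} via Stein factorization) with the glued space $U_A\coprod_U f_{\text{Knu}}^{-1}(U_{A^c})$; in the paper this identification is itself the first half of the proof of \cref{prop:strknumori} (one must show $f_{\emptyset,A}$ contracts $\mathrm{M}_C$ to $\mathrm{N}_C$ exactly for $C\in A$ and is injective elsewhere, then compare with the glued space via Zariski's Main Theorem). If you are permitted to quote that description you are fine, but as written it is an unproven input. Second, your statement that ``a general member of $C$ has exactly one node (resp.\ two nodes)'' is false for the F-curve $C$ itself, whose general point has $n-4$ nodes; what you are describing is the general fiber of $\mathrm{M}_C\to\mathrm{N}_C$, i.e.\ the generic contracted curve. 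That is the right object for your normal-bundle heuristic, but the smoothness assertion in (2) must hold at \emph{every} point of $\mathrm{N}_C$, not just the generic one; this is covered because along all of $\mathrm{N}_C$ (for $C\notin F_{\text{Knu}}^{1}$) both added points of the fiber product lie at a common node, so the $\{x_1y_1=x_2y_2\}$ model applies everywhere there — a point worth making explicit.
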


See \cref{subsec:Mori} for the definition of $F_{\text{Knu}}^{1}$. This theorem will be presented in \cref{prop:strknumori}. In particular, $\overline{\mathrm{M}}_{0,n}^{\text{Knu}}\left(F_{\text{Knu}}^{1}\right)$ is the minimal resolution of $\M{n-1}\times_{\M{n-2}}\M{n-1}$ among contractions of $\M{n}$. Despite these observations, several questions remain about $\overline{\mathrm{M}}_{0,n}^{\text{Knu}}(A)$. For instance, we seek for a modular description of these varieties. 

\subsection{Structure of the paper}\label{subsec:struc}
In \cref{sec:prelim}, we provide an overview of essential preliminaries for this paper. We introduce key concepts such as the F-conjecture, the sheaf of coinvariants, and coinvariant divisors. \cref{subsec:cont} delves into important contractions of $\M{n}$, including $f_{\text{Knu}}$, $f_{\text{Keel}}$, $f_{\text{Kap}}$, and the more general $f_{S,T}$. \cref{subsec:proof} is dedicated to proving \cref{thm:charcont}. In \cref{sec:cons}, we derive several corollaries of the main theorem. Specifically, \cref{subsec:Mori} deals with the Mori theory of $\M{n}$ with respect to $\NE{f_{\text{Knu}}}$ and includes proofs of \cref{thm:Knumori} and \cref{thm:Knugeo}. Finally, in \cref{sec:conclu}, we discuss the advantages of using coinvariant divisors in this work and explore some applications.

\subsection{Notation}\label{subsec:nota}
For any variety $X$, let $\text{A}_d(X)$ be the Chow group of $d$-cycles modulo numerical equivalence, and $\NE{X}$ be the closure of the cone of effective curves on $X$, both with $\Q$-coefficients. We denote $\text{A}_d(X,\Z)$ as the integral Chow group of $d$-cycles. If $f:X\to Y$ is a morphism, then let $\text{A}_1(f)$ be the kernel of $f_\ast:\text{A}_1(X)\to \text{A}_1(Y)$ and $\NE{f}:=\NE{X}\cap \text{A}_1(f)$. If $S$ is a subset of $\text{A}_1(X)$, then we will denote $\text{A}_1(S)$ as the $\Q$-vector subspace of $\text{A}_1(X)$ spanned by $S$, and $\NE{S}=\NE{X}\cap \text{A}_1(S)$. 

For $1 \le i \le n$, we define $\pi_i \colon \M{n} \to \M{n-1}$ to be the projection map that forgets the $i$th marked point. Note that $\pi_n \colon \M{n} \to \M{n-1}$ can be identified with the universal curve over $\M{n-1}$. For $1 \le i \le n-1$, we denote by $s_i \colon \M{n-1} \to \M{n}$ the $i$th section of the universal curve. If $S \subset \{1, \ldots, n\}$, we denote by $\pi_S \colon \M{n} \to \M{|S^c|}$ (resp. $\pi^S \colon \M{n} \to \M{|S|}$) the projection map that forgets the points indexed by $S$ (resp. the points not indexed by $S$). 

$[n]$ is the set $\left\{1,2,\cdots, n\right\}$. For any subset $I \subseteq [n]$ with $|I|, |I^c| \ge 2$, we denote by $\Delta_{0,I}$ the subvariety of $\M{n}$ consisting of stable curves with a node that separates the curve into two components, one containing $I$ and the other containing $I^c$. We define $\delta_{0,I}$ to be the divisor class corresponding to $\Delta_{0,I}$.

\section*{Acknowledgement}

The author would like to thank Angela Gibney for introducing the subject, her continued support, as well as for many helpful discussions. We are grateful to Brendan Hassett for explaining \cref{eg:segre}. We also appreciate Han-Bom Moon and Najmuddin Fakhruddin for their helpful comments on a preliminary version of the manuscript. Among all this useful feedback, Han-bom Moon provided additional examples of non-projective contractions of $\M{n}$ (cf. \cref{eg:segre}), and Najmuddin Fakhruddin provided an alternative proof of \cref{thm:charKap} and \cref{cor:charkeel} using classical methods (cf. \cref{subsec:nec}). Additionally, the author is indebted to Yeongjong Kim for communicating the proof of \cref{lem:linalg}.

\section{Preliminaries}\label{sec:prelim}

\subsection{F-curves and the F-conjecture}\label{subsec:fconj}
There is a natural stratification on $\overline{\rm{M}}_{g,n}$ defined as follows: $\rm{M}_{g,n}\subseteq\overline{\rm{M}}_{g,n}$ is an open subset, whose complement is the union of boundary divisors. The boundary divisors are images of clutching maps $\overline{\rm{M}}_{g_1,n_1}\times \overline{\rm{M}}_{g_2,n_2}\to \overline{\rm{M}}_{g,n}$ or  $\overline{\rm{M}}_{g-1,n+2}\to \overline{\rm{M}}_{g,n}$. Since the domain of clutching maps is again described in terms of moduli spaces of curves, this procedure can be iterated by repeatedly taking the open stratum of the domain that parametrizes smooth curves, and taking its complement. This defines a natural stratification on $\overline{\rm{M}}_{g,n}$, whose codimension $r$ stratum parametrizes stable curves with $r$ nodes. We refer to \cite[Section 2]{AC09} for the details of this stratification. The \textbf{boundary strata} are the closures of open strata of this stratification. Each irreducible component of the one-dimensional boundary stratum is called an \textbf{F-curve}. These strata are irreducible components of the closed subvariety of $\overline{\rm{M}}_{g,n}$ that parametrize stable curves with at least $3g-4+n$ nodes. There exist natural maps $\overline{\rm{M}}_{0,4}\to \overline{\rm{M}}_{g,n}$ and $\overline{\rm{M}}_{1,1}\to \overline{\rm{M}}_{g,n}$, as described in the next paragraph in the $g=0$ case, whose images are precisely the F-curves, each of which is isomorphic to $\mathbb{P}^1$.

\begin{conj}\label{conj:Fconj}(The F-conjecture)
    $\NE{\overline{\rm{M}}_{g,n}}$ is generated by the cycle class of F-curves. Equivalently, a line bundle on $\overline{\rm{M}}_{g,n}$ is nef if and only if its intersection number with each F-curve is nonnegative.
\end{conj}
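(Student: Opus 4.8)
The statement is the celebrated F-conjecture, which is open in general, so rather than a complete argument I can only lay out the structure a proof would take and isolate the essential difficulty. The first step is purely formal: the two formulations are equivalent by convex duality. Since the finitely many F-curves are effective, one always has $\text{Cone}(F)\subseteq\NE{\overline{\rm{M}}_{g,n}}$, and a finitely generated cone is closed, so the nef cone $\text{Nef}(\overline{\rm{M}}_{g,n})=\NE{\overline{\rm{M}}_{g,n}}^\vee$ is automatically contained in the F-nef cone $\text{Cone}(F)^\vee=\{D : D\cdot F\ge 0 \text{ for every F-curve } F\}$. Dualizing again and using that $\NE{\overline{\rm{M}}_{g,n}}$ is closed, the equality $\text{Nef}=\text{Cone}(F)^\vee$ of the second formulation is equivalent to $\NE{\overline{\rm{M}}_{g,n}}=\text{Cone}(F)$ of the first. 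Thus the entire content reduces to the single inclusion $\text{Cone}(F)^\vee\subseteq\text{Nef}$: every F-nef line bundle is actually nef.

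Next I would reduce to genus zero. By \cite[Theorem 0.7]{GKM02} the F-conjecture for $\M{n}$ implies it for $\overline{\rm{M}}_{g,m}$ whenever $g+m\le n$, so it suffices to treat $\M{n}$. Here the natural plan is induction on $n$: given an F-nef divisor $D$, one restricts it to each boundary divisor $\Delta_{0,I}$, which is a product of smaller spaces $\M{\bullet}$ where the inductive hypothesis applies, and then argues that an F-nef class all of whose boundary restrictions are nef must itself be nef. The delicate point is precisely the passage from ``nef on every boundary divisor'' to ``nef,'' that is, controlling curves that meet the interior $\rm{M}_{0,n}$; this is exactly where the existing proofs succeed only for small $n$.

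The place where this paper's method enters is a dual attack on $\text{Cone}(F)^\vee\subseteq\text{Nef}$ by exhibiting enough genuinely nef divisors. Coinvariant divisors are globally generated, hence base-point free and nef (\cref{thm:glogen}), and by \cref{thm:basis} the $\mathfrak{sl}_2$ level-one coinvariant divisors already span $\text{Pic}(\M{n})_\Q$, while propagation of vacua and factorization make their intersection numbers against F-curves computable. The idea is to produce, for each F-nef class, a positive combination of coinvariant divisors having the same intersection numbers with all F-curves, which would certify nefness of that class. The paper executes exactly this on a distinguished subspace in \cref{thm:Knuind}, proving the F-conjecture for the span of $F_{\text{Knu}}$.

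The hard part is the step that keeps the full conjecture out of reach: constructing a supply of nef (ideally coinvariant) divisors rich enough to dominate the \emph{entire} F-nef cone, rather than a subspace or the relative cone of a single contraction. Equivalently, one must rule out extremal rays of $\NE{\M{n}}$ not spanned by F-curves, and for general $n$ no known family of coinvariant divisors is yet shown to detect all of them. I would therefore read the present results as establishing the conjecture precisely on those sub-cones where such a dominating supply can be produced.
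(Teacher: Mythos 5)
This statement is a conjecture, not a theorem of the paper: the paper offers no proof of it, and it remains open, so there is no argument of mine to compare yours against. Your treatment is appropriate and accurate as far as it goes. The only genuinely provable content of the statement --- the equivalence of the two formulations --- you handle correctly by cone duality (note that the inclusion $\mathrm{Nef}\subseteq\mathrm{Cone}(F)^\vee$ needs only that F-curves are effective; the closedness of the finitely generated cone is what you need for the biduality step, and you invoke it in the right place). Your reduction to genus zero via \cite[Theorem 0.7]{GKM02}, and your account of where this paper's coinvariant-divisor method actually lands --- namely the F-conjecture restricted to the span of $F_{\text{Knu}}$ as in \cref{thm:Knuind}, rather than the full cone --- both match what the paper claims. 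Since no proof exists to take a ``same'' or ``different'' route from, the only correct answer is the one you gave: identify the open inclusion $\mathrm{Cone}(F)^\vee\subseteq\mathrm{Nef}$ as the unresolved content and locate the partial results relative to it.
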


The F-conjecture was first posed implicitly as a question in \cite[Question 1.1]{KM13} and then explicitly stated and investigated as a conjecture in \cite[Conjecture 0.2]{GKM02}.

As this paper primarily concerns $\M{n}$, we provide an explicit description of F-curves on $\M{n}$. For the general case, we refer to \cite[Section 2]{GKM02}. Let $I, J, K, L$ be a partition of $\left\{1,\cdots, n\right\}$, with none of them being empty. Fix stable rational curves $C_I, C_J, C_K, C_L$, each with $|I|+1,|J|+1,|K|+1,|L|+1$ marked points. Define $F(I,J,K,L)$ as the image of the map $\overline{\mathrm{M}}_{0,4}\to \overline{\mathrm{M}}_{0,n}$ obtained by attaching $C_I, C_J, C_K, C_L$ to the corresponding points. If we choose $C_I, C_J, C_K, C_L$ to be stable pointed rational curves with the maximal number of nodes, then $F(I,J,K,L)$ is an F-curve, and every F-curve can be represented in this form. Note that while $F(I,J,K,L)$ depends on the choice of  $C_I, C_J, C_K, C_L$, its class in the Chow group $\text{A}_1\left(\overline{\mathrm{M}}_{0,n}\right)$ is independent of the choice of specific curves. Therefore, in this paper, we use the notation $F(I,J,K,L)$ without specifying the choices $C_I, C_J, C_K, C_L$. It is important to note that, although the F-conjecture remains open, F-curves generate the Chow group $\text{A}_1(\overline{\mathrm{M}}_{0,n})$.

\subsection{Coinvariant divisors}\label{subsec:voa} 

In this section, we provide a brief introduction to the theory of conformal blocks. While the theory has recently been extended to a more general setting—namely, vertex operator algebras, as developed in \cite{BFM91, FBZ04, NT05, DGT21, DGT22a, DGT22b}—we restrict our attention to the case of Lie algebras, which is the only setting relevant to this paper. The main reference for this section is \cite{TUY89} and our exposition primarily follows \cite{Fak12}.

Let $\mathfrak{g}$ be a simple Lie algebra. The corresponding affine Lie algebra is defined by
\[
\hat{\mathfrak{g}} = \left(\mathfrak{g} \otimes \mathbb{C}(\!(z)\!) \right) \oplus \mathbb{C}c.
\]
Fix $l \in \mathbb{N}$, which we refer to as the \textbf{level}. Define $P_l$ to be the set of dominant integral weights $\lambda$ of $\mathfrak{g}$ such that $\lambda(\theta) \le l$, where $\theta$ is the highest root. For any $\lambda \in P_l$, there exists an irreducible module $W^{\lambda}$ of $\hat{\mathfrak{g}}$, constructed as the irreducible quotient of a Verma module corresponding to $\lambda$, where $c$ acts by multiplication by $l$. Let $S_l$ be the set of such modules.

Given a collection of weights $\lambda_1, \ldots, \lambda_n \in P_l$—equivalently, modules $W^{\lambda_1}, \ldots, W^{\lambda_n}\in S_l$—there exists a corresponding vector bundle $\mathbb{V}_{g,n}(\mathfrak{g},l, W^{\lambda_1}, \ldots, W^{\lambda_n})$ on $\overline{\mathcal{M}}_{g,n}$, known as the \textbf{sheaf of coinvariants}. Its first Chern class, or determinant line bundle, $\mathbb{D}_{g,n}(\mathfrak{g},l,W^{\lambda_1}, \ldots, W^{\lambda_n})$ is called the \textbf{coinvariant divisor}. These were first rigorously defined in \cite{TUY89}. There is a vast body of literature on the study of coinvariant divisors and their relation to the geometry of $\M{n}$. Given the extensive volume, we list only some of the works that are directly related to this paper: \cite{Fak12, AGSS12, GG12, AGS14}.

\begin{rmk}\label{rmk:cdiv}
    In earlier documents, coinvariant divisors were often referred to as conformal block divisors, e.g. \cite{Fak12, BGM15, BGM16}. However, this terminology may be confusing, as they are the first Chern class of the sheaf of coinvariants, not of the sheaf of conformal blocks, which is the dual of the sheaf of coinvariants. Consequently, we adopt this more accurate term.
\end{rmk}

Following \cite{DGT21, DGT22a, DGT22b}, we introduce the following notation. We will write 
\[\mathbb{V}_{g,n}(\mathfrak{g}, l, W^{\lambda_1} \otimes \cdots \otimes W^{\lambda_n})\]
instead of $\mathbb{V}_{g,n}(\mathfrak{g}, l, W^{\lambda_1}, \ldots, W^{\lambda_n})$ to emphasize the role of the tensor product in the construction. Indeed, at each point, the fiber of this vector bundle is defined as a quotient of $W^{\lambda_1} \otimes \cdots \otimes W^{\lambda_n}$. For simplicity, we will use the abbreviation
\[
W^{\bullet} = W^{\lambda_1} \otimes \cdots \otimes W^{\lambda_n}
\]
when the weights $\lambda_1, \ldots, \lambda_n$ are clear from the context. The same notation also applies to coinvariant divisors.

We can define a tensor product between modules in $S_l$, and the resulting $\hat{\mathfrak{g}}$-module decomposes as a direct sum of modules in $S_l$. In particular, this endows $S_l$ with the structure of a fusion ring. This structure allows for the definition of fusion coefficients, dual modules, and related representation-theoretic concepts. Note that $0\in P_l$, and the corresponding module in $S_l$ plays the role of the trivial representation. A key fact we will use frequently in this paper is that if $\mathfrak{g}$ is of type A, D, or E, then $S_1$ forms a group under the tensor product. In the case where $\mathfrak{g} = \mathfrak{sl}_{m}$, the set $S_1$ is isomorphic to $\mathbb{Z}/m\mathbb{Z}$.

We now recall two fundamental theorems concerning the sheaf of coinvariants, which will be used repeatedly throughout this paper. These results are particularly important, as they imply that the sheaf of coinvariants—and consequently the coinvariant divisors—are well-behaved under tautological morphisms.

\begin{thm}\label{thm:propvac}(Propagation of Vacua, \cite[Proposition 2.2.3]{TUY89})
The following identity holds on $\overline{\mathcal{M}}_{g,n+1}$:
\[
\pi_{n+1}^\ast \mathbb{V}_{g,n}(\mathfrak{g}, l, W^\bullet) = \mathbb{V}_{g,n+1}(\mathfrak{g}, l, W^\bullet \otimes W^0),
\]
where $W^0$ denotes the $\hat{\mathfrak{g}}$-module corresponding to the zero weight. In particular,
\[
\pi_{n+1}^\ast \mathbb{D}_{g,n}(\mathfrak{g}, l, W^\bullet) = \mathbb{D}_{g,n+1}(\mathfrak{g}, l, W^\bullet \otimes W^0),
\]
\end{thm}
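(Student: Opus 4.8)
The plan is to prove the stronger fiberwise statement and then globalize. Recall that the fiber of $\mathbb{V}_{g,n}(\mathfrak{g},l,W^\bullet)$ over a pointed curve $(C;p_1,\dots,p_n)$ is the space of coinvariants $H(C;p_1,\dots,p_n;W^\bullet)=W^\bullet/\mathfrak{g}_C\cdot W^\bullet$, where $\mathfrak{g}_C=\mathfrak{g}\otimes\Gamma(C\setminus\{p_1,\dots,p_n\},\mathcal{O}_C)$ acts through its Laurent expansions at the marked points. Since $\pi_{n+1}$ forgets the last point, the fiber of $\pi_{n+1}^\ast\mathbb{V}_{g,n}(\mathfrak{g},l,W^\bullet)$ over $(C;p_1,\dots,p_{n+1})$ is exactly $H(C;p_1,\dots,p_n;W^\bullet)$, whereas the fiber of $\mathbb{V}_{g,n+1}(\mathfrak{g},l,W^\bullet\otimes W^0)$ there is $H(C;p_1,\dots,p_{n+1};W^\bullet\otimes W^0)$. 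So it suffices to produce a canonical isomorphism between these two spaces that is natural in the family; the determinant line bundle statement then follows by passing to first Chern classes.

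The candidate map is induced by $\iota\colon w\mapsto w\otimes v_0$, where $v_0\in W^0$ is the vacuum vector, characterized by $(\mathfrak{g}\otimes\mathbb{C}[[z]])\cdot v_0=0$. First I would check descent to coinvariants: for $X\in\mathfrak{g}_C$ the expansion at $p_{n+1}$ lies in $\mathfrak{g}\otimes\mathbb{C}[[z]]$, so $X\cdot v_0=0$ and $X\cdot(w\otimes v_0)=(X\cdot w)\otimes v_0$; hence $\mathfrak{g}_C\cdot W^\bullet$ maps into the relations of the larger coinvariant space and $\iota$ descends to $\bar\iota$. For surjectivity I would use that $W^0$ is generated from $v_0$ by negative modes $A\otimes z^{-m}$ with $m\ge 1$: since our application is to $g=0$, a global section $X\in\mathfrak{g}\otimes\Gamma(C\setminus\{p_1,\dots,p_{n+1}\},\mathcal{O}_C)$ whose expansion at $p_{n+1}$ has prescribed principal part $A z^{-m}$ exists trivially (in general it is provided by Riemann--Roch, allowing poles at all marked points). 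The relation $X\cdot(w\otimes u)\equiv 0$ then trades the action of the creation operator on the $W^0$ factor for an action on $W^\bullet$, and an induction on PBW degree in $W^0$ reduces every class to the form $w\otimes v_0$.

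The crux is injectivity of $\bar\iota$, and here I would pass to the dual picture of conformal blocks, namely $\mathfrak{g}_C$-invariant functionals on $W^\bullet$. Restriction along $\iota$ sends an invariant functional on $W^\bullet\otimes W^0$ to the functional $w\mapsto\tilde\Phi(w\otimes v_0)$ on $W^\bullet$, which is invariant by the same vacuum-annihilation computation and is dual to $\bar\iota$; thus injectivity of $\bar\iota$ is equivalent to surjectivity of this restriction, i.e. to the claim that every block $\Phi$ on $W^\bullet$ extends to a block $\tilde\Phi$ on $W^\bullet\otimes W^0$. The extension is forced by the Ward identity $\tilde\Phi(X\cdot(w\otimes u))=0$: on $w\otimes((A\otimes z^{-m})\cdot u)$ its value must equal $-\tilde\Phi((X\cdot w)\otimes u)$ up to the contribution of the regular part of $X$ at $p_{n+1}$, which lowers the PBW degree and is handled inductively. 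The real obstacle is to show this prescription is well defined — independent of the meromorphic $X$ realizing a given principal part, two choices differing by an element regular at $p_{n+1}$ whose contribution is killed by invariance — and consistent under the relations of $\hat{\mathfrak{g}}$, so that the recursion terminates without contradiction. This is precisely the step where the central extension and the level $l$ enter, so I would isolate it as a separate lemma.

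Finally, I would globalize. The assignment $w\mapsto w\otimes v_0$ is natural in the family $(C;p_\bullet)$, so the fiberwise maps assemble into a morphism of coherent sheaves $\pi_{n+1}^\ast\mathbb{V}_{g,n}(\mathfrak{g},l,W^\bullet)\to\mathbb{V}_{g,n+1}(\mathfrak{g},l,W^\bullet\otimes W^0)$. Being a fiberwise isomorphism — the same argument applying over the boundary through the normalization and factorization description of the sheaf of coinvariants — it is an isomorphism of vector bundles, and taking determinants yields $\pi_{n+1}^\ast\mathbb{D}_{g,n}(\mathfrak{g},l,W^\bullet)=\mathbb{D}_{g,n+1}(\mathfrak{g},l,W^\bullet\otimes W^0)$.
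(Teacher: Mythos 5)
The paper does not prove this statement: it is imported verbatim from \cite[Proposition 2.2.3]{TUY89} (see also Beauville's and Ueno's expositions), so there is no internal proof to compare against. Your outline is the standard argument for propagation of vacua --- the map $w\mapsto w\otimes v_0$ descends because the expansion at $p_{n+1}$ of a section regular there lands in $\mathfrak{g}\otimes\mathbb{C}[[z]]$, which kills $v_0$; surjectivity follows by trading creation operators on the $W^0$ factor for the action of global sections on $W^\bullet$ via the Ward identities and inducting on the PBW filtration; injectivity is dualized to the extension problem for conformal blocks. All of this is correct and is exactly how the result is proved in the literature.

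That said, as written this is a proof sketch rather than a proof, because the step you yourself identify as the crux --- well-definedness and consistency of the recursively defined extension $\tilde\Phi$, i.e.\ independence of the choice of meromorphic section realizing a given principal part and compatibility with the relations defining the integrable quotient $W^0$ (this is where the central extension, the level $l$, and the relation $(e_\theta\otimes z^{-1})^{l+1}v_0=0$ enter) --- is deferred to an unstated lemma and never carried out. A second gap is the globalization over the boundary of $\overline{\mathcal{M}}_{g,n+1}$: since $\pi_{n+1}$ involves stabilization, at a point where $p_{n+1}$ sits on a component contracted by the stabilization map the fiber of $\pi_{n+1}^\ast\mathbb{V}_{g,n}$ is the space of coinvariants on a \emph{different} curve, and identifying it with the coinvariants of $W^\bullet\otimes W^0$ on $C$ requires combining propagation with factorization at the new node and the computation of three-point blocks on $\mathbb{P}^1$ with a vacuum insertion; your parenthetical remark gestures at this but does not supply it. Neither issue indicates a wrong approach --- both are resolved in \cite{TUY89} --- but they are precisely the nontrivial content of the theorem, so the proposal should not be regarded as a self-contained proof.
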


\begin{thm}\label{thm:factor}(Factorization Theorem, \cite[Theorem 6.2.6]{TUY89})
Let $\xi\colon \overline{\mathcal{M}}_{g_1,n_1} \times \overline{\mathcal{M}}_{g_2,n_2} \to \overline{\mathcal{M}}_{g,n}$ be the clutching map. Then there exists a canonical isomorphism
\[
\xi^\ast \mathbb{V}_{g,n}(\mathfrak{g}, l, W^{\bullet}) \simeq \bigoplus_{M \in S_l} \pi_1^\ast \mathbb{V}_{g_1,n_1}(\mathfrak{g}, l, W^{\bullet} \otimes M) \otimes \pi_2^\ast \mathbb{V}_{g_2,n_2}(\mathfrak{g}, l, W^{\bullet} \otimes M'),
\]
where $M'$ denotes the dual module. In particular,

\begin{align*}
    &\xi^\ast \mathbb{D}_{g,n}(\mathfrak{g}, l, W^{\bullet}) \simeq \\&\bigotimes_{M \in S_l} \pi_1^\ast \mathbb{D}_{g_1,n_1}(\mathfrak{g}, l, W^{\bullet} \otimes M)^{\otimes \rank \mathbb{V}_{g_2,n_2}(\mathfrak{g}, l, W^{\bullet} \otimes M')} \otimes \pi_2^\ast \mathbb{D}_{g_2,n_2}(\mathfrak{g}, l, W^{\bullet} \otimes M')^{\otimes \rank \mathbb{D}_{g_1,n_1}(\mathfrak{g}, l, W^{\bullet} \otimes M)}.
\end{align*}

Similarly, for the clutching map $\xi_{\mathrm{irr}} \colon \overline{\mathcal{M}}_{g-1,n+2} \to \overline{\mathcal{M}}_{g,n}$, we have a canonical isomorphism
\[
\xi_{\mathrm{irr}}^\ast \mathbb{V}_{g,n}(\mathfrak{g}, l, W^{\bullet}) \simeq \bigoplus_{M \in S_l} \mathbb{V}_{g-1,n+2}(\mathfrak{g}, l, W^{\bullet} \otimes M \otimes M').
\]
In particular,
\[
\xi_{\mathrm{irr}}^\ast \mathbb{D}_{g,n}(\mathfrak{g}, l, W^{\bullet}) \simeq \bigotimes_{M \in S_l} \mathbb{D}_{g-1,n+2}(\mathfrak{g}, l, W^{\bullet} \otimes M \otimes M').
\]
\end{thm}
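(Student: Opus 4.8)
The plan is to establish the isomorphism at the level of the sheaves of coinvariants $\mathbb{V}$, from which the displayed identity for the determinant line bundles $\mathbb{D}$ follows formally: taking first Chern classes and using $c_1(A\otimes B)=\rank(B)\,c_1(A)+\rank(A)\,c_1(B)$ together with $c_1\left(\bigoplus_i V_i\right)=\sum_i c_1(V_i)$ is exactly what converts the direct sum of tensor products on the $\mathbb{V}$ side into the tensor product of determinants with rank exponents on the $\mathbb{D}$ side. So the genuine content is the vector bundle isomorphism, and since both sides are vector bundles on the boundary stratum it suffices to produce a canonical fiberwise isomorphism varying algebraically in the family.

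First I would fix a point of $\overline{\mathcal{M}}_{g_1,n_1}\times\overline{\mathcal{M}}_{g_2,n_2}$, giving pointed curves $C_1,C_2$, and let $C$ be the nodal curve produced by clutching, with normalization $\nu\colon C_1\sqcup C_2\to C$ gluing the distinguished point $x_+$ of $C_1$ to the distinguished point $x_-$ of $C_2$ into the node $q$. The fiber of $\xi^\ast\mathbb{V}_{g,n}(\mathfrak{g},l,W^\bullet)$ here is the space of coinvariants of $W^\bullet$ under $\mathfrak{g}\otimes\Gamma\left(C\setminus\{\text{marked points}\},\mathcal{O}_C\right)$. The key construction is \emph{sewing}: for each $M\in S_l$ I would place $M$ at $x_+$ and its dual $M'$ at $x_-$ and contract across the node using the canonical $\mathfrak{g}$-invariant pairing $M\otimes M'\to\mathbb{C}$, producing a map from the coinvariants of $W^\bullet\otimes M$ on $C_1$ and of $W^\bullet\otimes M'$ on $C_2$ into the coinvariants of $W^\bullet$ on $C$. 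Summing over all $M\in S_l$ reflects the fact that any integrable level-$l$ module may propagate through the node.

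The main obstacle is proving that this sewing map is an isomorphism. Injectivity (that the sum is direct) I would obtain from the action at the node: the contributions of distinct $M$ lie in different joint eigenspaces for the Cartan and grading operators acting at $q$, hence they cannot interfere. Surjectivity is the delicate half. Here the essential input is the decomposition of $\mathfrak{g}\otimes\Gamma\left(C\setminus\{\text{marked points}\},\mathcal{O}_C\right)$ relative to the node: a section regular on $C$ restricts to a pair of sections on $C_1,C_2$ whose expansions at $x_+,x_-$ match, and it is precisely this matching that the pairing $M\otimes M'\to\mathbb{C}$ encodes after passing to coinvariants. The genuinely hard analytic and representation-theoretic content of \cite{TUY89} enters in showing finite-dimensionality and that only the finitely many $M\in S_l$ contribute, so that the local expansions across the node can be resummed; this relies on the integrability of the modules $W^{\lambda_i}$ and the finiteness of the fusion ring $S_l$.

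Finally I would verify that the fiberwise sewing isomorphism respects the algebraic structure of the family, so it globalizes to an isomorphism of vector bundles over the boundary stratum, with \cref{thm:propvac} ensuring the labelling conventions agree on the two factors. The non-separating case $\xi_{\mathrm{irr}}$ is handled by the same sewing applied to a single curve in $\overline{\mathcal{M}}_{g-1,n+2}$ whose two extra points are glued: there is no product of factors, so one simply inserts $M\otimes M'$ at the glued pair and sums over $S_l$, and the determinant statement again follows by taking $c_1$.
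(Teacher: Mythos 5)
This statement is not proved in the paper: it is quoted verbatim as \cite[Theorem 6.2.6]{TUY89}, and the only argument the paper supplies is the remark immediately following it, namely that the assertions for $\mathbb{D}_{g,n}$ follow from those for $\mathbb{V}_{g,n}$ by basic properties of the first Chern class. Your deduction of the $\mathbb{D}$ statements — $c_1$ of a direct sum is the sum of $c_1$'s, and $c_1(A\otimes B)=\rank(B)c_1(A)+\rank(A)c_1(B)$ — is exactly that remark made explicit, so on the one part the paper actually argues, you agree with it. The rest of your proposal is a sketch of the proof of the cited theorem itself, which goes beyond what the paper attempts; it is consistent in outline with the standard sewing argument of \cite{TUY89}, and you are candid that the finite-dimensionality, the restriction to the finitely many $M\in S_l$, and the convergence of the local expansions are deferred to the reference.

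One point in your sketch would need repair if it were to stand as a proof. You propose to contract across the node using the pairing $M\otimes M'\to\mathbb{C}$ to get a map from the coinvariants on $C_1\sqcup C_2$ to the coinvariants on $C$. That pairing is invariant only under the constant subalgebra $\mathfrak{g}\subset\hat{\mathfrak{g}}$ acting at the node, not under the full action of $\mathfrak{g}\otimes\mathbb{C}(\!(z)\!)$, whereas a regular function on $C$ near the node restricts to a pair of local expansions at $x_+$ and $x_-$ that interact with all graded pieces of $M$ and $M'$. Consequently the naive contraction does not obviously descend to coinvariants. The construction in \cite{TUY89} instead works with the dual spaces of vacua and inserts the canonical element $\sum_i e_i\otimes e^i$ of a \emph{completed} tensor product $M\,\hat{\otimes}\,M'$, using integrability of the modules in $S_l$ to show the resulting functional is well defined; the coinvariant statement is then obtained by dualizing. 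Your injectivity argument via joint eigenspaces at the node and your treatment of $\xi_{\mathrm{irr}}$ are fine as heuristics, but they inherit the same issue. Since the paper treats the whole theorem as a black box, none of this affects the paper's logic; it only means your sketch should not be read as a self-contained alternative proof.
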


Note that the corresponding statements for  $\mathbb{D}_{g,n}(\mathfrak{g}, l, W^{\bullet})$ in \cref{thm:propvac} and \cref{thm:factor} follow from those for $\mathbb{V}_{g,n}(\mathfrak{g}, l, W^{\bullet})$, together with basic properties of the first Chern class.

Another interesting property is the global generation of the sheaf of coinvariants.

\begin{thm}\label{thm:glogen}(\cite[Lemma 2.5]{Fak12})
    $\mathbb{V}_{g,n}\left(\mathfrak{g}, l, W^{\bullet} \right)$ is globally generated. Hence, $\mathbb{D}_{g,n}\left(\mathfrak{g}, l, W^{\bullet} \right)$ is base point free.
\end{thm}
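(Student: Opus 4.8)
The statement has two parts, and the second follows formally from the first: if $\mathbb{V}_{g,n}(\mathfrak{g},l,W^\bullet)$ is a globally generated vector bundle of rank $r$, then a surjection $\mathcal{O}^{\oplus N}\twoheadrightarrow \mathbb{V}_{g,n}(\mathfrak{g},l,W^\bullet)$ induces, on $r$-th exterior powers, a surjection $\bigwedge^r\mathcal{O}^{\oplus N}\twoheadrightarrow \bigwedge^r \mathbb{V}_{g,n}(\mathfrak{g},l,W^\bullet)=\mathbb{D}_{g,n}(\mathfrak{g},l,W^\bullet)$ (the exterior power of a surjection of sheaves is a surjection); since the source is a trivial, hence globally generated, bundle, the determinant line bundle is globally generated, i.e. base point free. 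So the plan is to concentrate entirely on proving that the sheaf of coinvariants $\mathbb{V}_{g,n}(\mathfrak{g},l,W^\bullet)$ is globally generated, and to deduce the line-bundle assertion at the very end by this elementary exterior-power argument.

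To prove global generation, the strategy is to realize $\mathbb{V}_{g,n}(\mathfrak{g},l,W^\bullet)$ as a quotient of a sheaf that is manifestly globally generated. The natural candidate comes directly from the TUY construction \cite{TUY89}: by definition the fiber of $\mathbb{V}_{g,n}(\mathfrak{g},l,W^\bullet)$ over a pointed curve $(C,p_1,\dots,p_n)$ is the space of coinvariants of the fixed vector space $W^\bullet=W^{\lambda_1}\otimes\cdots\otimes W^{\lambda_n}$ by the action of the gauge algebra $\mathfrak{g}\otimes H^0(C\setminus\{p_i\},\mathcal{O})$. Thus each fiber is a quotient of the single vector space $W^\bullet$, which does not vary with the point. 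First I would make this uniform over the moduli space, exhibiting a surjection of sheaves $\mathcal{W}\twoheadrightarrow \mathbb{V}_{g,n}(\mathfrak{g},l,W^\bullet)$ whose source $\mathcal{W}$ has constant fiber $W^\bullet$, together with distinguished constant global sections $s_w$ attached to vectors $w\in W^\bullet$. Evaluating, the values $s_w(x)$ range over all of the fiber at every point $x$ because that fiber is a quotient of $W^\bullet$; hence the $s_w$ generate, and $\mathbb{V}_{g,n}(\mathfrak{g},l,W^\bullet)$ is globally generated. Along the way I would reduce to finitely many generators at each point using that $W^{\lambda_i}$ is generated over $\hat{\mathfrak{g}}$ by its finite-dimensional lowest-weight space, so that only boundedly many energy levels are needed.

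The main obstacle is precisely the passage from ``each fiber is a quotient of $W^\bullet$'' to ``there are enough global sections on $\overline{\mathcal{M}}_{g,n}$'', i.e. checking that the constant sections $s_w$ genuinely descend to sections of the sheaf on $\overline{\mathcal{M}}_{g,n}$. The Laurent-expansion formula defining the gauge action, and hence the identification of the fiber as a quotient of $W^\bullet$, depends on a choice of formal coordinate at each marked point; to obtain sections defined globally on $\overline{\mathcal{M}}_{g,n}$ one must use the coordinate-independent ($\mathrm{Aut}\,\mathcal{O}$-equivariant) formulation of the sheaf of coinvariants and verify that the chosen generating vectors transform compatibly, so that their classes glue across the moduli space. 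A related technical point is that $W^\bullet$ is infinite-dimensional, so $\mathcal{W}$ must be treated as a filtered union of its finite-dimensional energy-truncations, with the finiteness of $\operatorname{rank}\mathbb{V}_{g,n}(\mathfrak{g},l,W^\bullet)$ (a consequence of \cite{TUY89}) guaranteeing that finitely many truncations suffice at each point. Once these descent and finiteness issues are settled---this is where the real content lies, and where one leans on the foundational results of \cite{TUY89} and the treatment in \cite{Fak12}---the global-generation statement, and with it base point freeness of $\mathbb{D}_{g,n}(\mathfrak{g},l,W^\bullet)$, follows immediately.
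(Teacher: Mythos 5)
First, a remark on the comparison itself: the paper gives no proof of this statement — it is quoted directly from \cite[Lemma 2.5]{Fak12} — so the only meaningful comparison is with Fakhruddin's argument. The parts of your proposal that are solid are the reduction of the second sentence to the first (a quotient of a globally generated sheaf is globally generated, and your exterior-power argument for the determinant is correct and standard) and the overall strategy of exhibiting $\mathbb{V}_{g,n}(\mathfrak{g},l,W^\bullet)$ as a quotient of a globally generated sheaf, which is indeed Fakhruddin's strategy.

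The gap is exactly the step you flag and then defer: the ``constant global sections $s_w$'' do not exist. The sheaf $\mathcal{W}$ with fibre $W^\bullet$ is not the trivial sheaf $W^\bullet\otimes\mathcal{O}$ on $\overline{\mathcal{M}}_{g,n}$; in the coordinate-independent formulation it is the twist of $W^\bullet$ by the torsor of formal coordinates at the marked points, and $\mathrm{Aut}\,\mathcal{O}$ acts on each $W^{\lambda_i}$ through $L_0$ and all the positive Virasoro modes, moving individual vectors and mixing energy levels. Hence a fixed vector $w\in W^\bullet$ does not define a section of $\mathcal{W}$, the proposed supply of global sections evaporates, and writing ``one leans on \cite{TUY89} and \cite{Fak12}'' at precisely this point amounts to assuming the statement to be proved. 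What actually rescues the argument in \cite[Lemma 2.5]{Fak12} is a finite-dimensional reduction your sketch omits: in genus zero the composite $V_{\lambda_1}\otimes\cdots\otimes V_{\lambda_n}\hookrightarrow W^\bullet\twoheadrightarrow(\text{coinvariants})$ is already surjective on every fibre (a genus-$0$ fact from \cite{TUY89}, reflecting that $\Gamma(\mathbb{P}^1\setminus\{p_i\},\mathcal{O})$ supplies all the lowering operators), and on this lowest-energy subspace $\mathrm{Aut}\,\mathcal{O}$ acts only through a scalar character, so the quotient maps assemble into a genuine surjection of sheaves $\bigl(\bigotimes_i V_{\lambda_i}\bigr)\otimes\mathcal{O}\to\mathbb{V}$ on $\M{n}$, which is manifestly globally generated. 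This also exposes a second defect: your argument is insensitive to the genus, whereas the finite-dimensional surjectivity is special to $g=0$ (already $\operatorname{rank}\mathbb{V}_{1,1}^{m}(0)=m>1=\dim V_0$ by \cref{thm:lattice}); this is why \cite[Lemma 2.5]{Fak12} is a genus-zero statement and why the paper only ever invokes \cref{thm:glogen} on $\M{n}$, in \cref{cor:knusln} and \cref{thm:secknumori}.
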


As stated, the set of simple modules $S_1$ of level 1 over $\mathfrak{sl}_{m}$ forms an abelian group isomorphic to $\mathbb{Z}/m\mathbb{Z}$. Henceforth, we will denote them using nonnegative integers less than $m$. If $W^i$ corresponds to $0 \leq a_i <m$, we denote by $\mathbb{V}_{g,n}^{m}(a_\bullet)$ the vector bundle $\mathbb{V}_{g,n}(\mathfrak{sl}_{m}, 1, W^{\bullet})$ and $\mathbb{D}_{g,n}^{m}(a_\bullet)$ the divisor $\mathbb{D}_{g,n}(\mathfrak{sl}_{m}, 1, W^{\bullet})$, where
\[ (a_\bullet)=(a_1,\cdots, a_n).\]
The $\mathbb{D}_{g,n}^{m}(a_\bullet)$'s are referred to as the \textbf{type A, level 1 coinvariant divisors}. This family of coinvariant divisors has been extensively studied and is comparatively well understood. Now, we will state some important properties of type A coinvariant divisors that will be utilized in this paper, noting that many of them originate from Fakhruddin's fundamental paper \cite{Fak12}.

\begin{thm}\label{thm:lattice}(\cite[Section 5.2]{DGT22b})
    We have
    \[ \text{rank }\mathbb{V}_{g,n}^{m}(a_\bullet)=\begin{cases}
        m^g &\text{if }\sum a_i=0\bmod m\\
        0 &\text{otherwise}.
    \end{cases} \]
    In particular, if $g=0$, then by \cref{thm:factor},
    \begin{equation}\label{eqn:lattice}
        \mathbb{D}_{0,n}^{m}(a_\bullet)\cdot F(I,J,K,L)=\text{deg }\mathbb{D}_{0,4}^{m}\left(\sum_{i\in I}a_i, \sum_{i\in J}a_i,\sum_{i\in K}a_i,\sum_{i\in L}a_i  \right)
    \end{equation}
\end{thm}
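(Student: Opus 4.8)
The plan is to treat the rank formula as given by \cite[Section 5.2]{DGT22b} (it also follows at once from the Verlinde formula: for $\mathfrak{sl}_m$ at level $1$ the modular $S$-matrix is $S_{ab}=m^{-1/2}\omega^{ab}$ with $\omega=e^{2\pi i/m}$ and $a,b\in\mathbb{Z}/m\mathbb{Z}$, so $\sum_{\mu}(S_{0\mu})^{2-2g-n}\prod_i S_{a_i\mu}=m^{g-1}\sum_{\mu=0}^{m-1}\omega^{\mu\sum_i a_i}$, which is $m^g$ when $\sum a_i\equiv 0\bmod m$ and $0$ otherwise), and to deduce \eqref{eqn:lattice} from the factorization theorem. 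The structural input used repeatedly is that in genus $0$ the rank formula forces $\operatorname{rank}\mathbb{V}_{0,n}^{m}(a_\bullet)\in\{0,1\}$, the value $1$ occurring exactly when $\sum a_i\equiv 0 \bmod m$.

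First I would realize the F-curve map explicitly. By definition $F(I,J,K,L)$ is the image of a map $g\colon\overline{\mathrm{M}}_{0,4}\to\overline{\mathrm{M}}_{0,n}$ gluing fixed stable tails $C_I,C_J,C_K,C_L$ onto the four marked points of the varying central curve. This $g$ is the restriction to $\overline{\mathrm{M}}_{0,4}\times\{[C_I]\}\times\{[C_J]\}\times\{[C_K]\}\times\{[C_L]\}$ of a composition of four clutching maps, one for each tail. Since $\mathbb{D}_{0,n}^{m}(a_\bullet)\cdot F(I,J,K,L)$ is by definition the degree of $g^\ast\mathbb{D}_{0,n}^{m}(a_\bullet)$ on $\overline{\mathrm{M}}_{0,4}\cong\mathbb{P}^1$, it suffices to compute this pullback.

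Next I would apply \cref{thm:factor} one clutching at a time. When attaching a tail $C_X$ along a node whose central-side label is $M\in S_1\cong\mathbb{Z}/m\mathbb{Z}$ and whose tail-side label is the dual $M'$, the summand indexed by $M$ is nonzero only if both the tail coinvariants $\mathbb{V}_{0,|X|+1}^{m}(\{a_i\}_{i\in X},M')$ and the complementary coinvariants are nonzero; by the genus-$0$ rank formula the first is nonzero precisely for the unique $M$ with label $\alpha_X:=\sum_{i\in X}a_i\bmod m$. Hence the direct sum collapses to a single term, and since both relevant ranks equal $1$, every exponent in the divisor version of \cref{thm:factor} is $1$. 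Restricting the tail factor $\pi_2^\ast\mathbb{D}_{0,|X|+1}^{m}(\dots)$ to the fixed point $[C_X]$ gives a divisor pulled back from a point, hence of degree $0$ on $\overline{\mathrm{M}}_{0,4}$, while the remaining factor records the central curve now carrying the node label $\alpha_X$ in place of the tail. Iterating over $X=I,J,K,L$ replaces each tail by its node label and yields
\[
g^\ast\mathbb{D}_{0,n}^{m}(a_\bullet)=\mathbb{D}_{0,4}^{m}(\alpha_I,\alpha_J,\alpha_K,\alpha_L)
\]
modulo degree-$0$ contributions; taking degrees gives \eqref{eqn:lattice}.

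The main obstacle is the bookkeeping in this iteration: one must check at every stage that the sum over $S_1$ in \cref{thm:factor} reduces to exactly one summand and that the accompanying rank exponents are all $1$ — this is precisely where the $\operatorname{rank}\le 1$ feature of genus $0$, level $1$ is indispensable — and that the tail factors, being pulled back from the fixed points $[C_X]$, contribute trivially to the degree. When $\sum a_i\not\equiv 0\bmod m$ both sides vanish identically, since every coinvariant bundle in sight has rank $0$ and hence trivial determinant, so the identity holds for free and only the case $\sum a_i\equiv 0\bmod m$ requires the argument above.
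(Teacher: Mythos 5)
Your proposal is correct and follows essentially the same route as the paper: the rank formula is taken as given from the cited reference, and the intersection number is computed by pulling back along the clutching maps defining $F(I,J,K,L)$ via \cref{thm:factor}, using the genus-zero rank formula to collapse the sum over node labels to the single surviving summand with all rank exponents equal to $1$ and the fixed tails contributing degree zero. The only cosmetic difference is that you apply factorization one node at a time, whereas the paper sums over all four node labels simultaneously with products of tail ranks as multiplicities; the computation is identical.
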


We briefly outline the reasoning behind the second statement of \cref{thm:lattice}. The curve $F(I, J, K, L)$ is the image of a morphism $F \colon \M{4} \to \M{n}$ that glues four fixed stable rational curves, each with $|I|+1$, $|J|+1$, $|K|+1$, and $|L|+1$ marked points, respectively. Hence, by \cref{thm:factor}, we have:
\begin{align*}
\mathbb{D}_{0,n}^m(a_{\bullet}) \cdot F(I, J, K, L) 
&= \deg F^\ast \mathbb{D}_{0,n}^m(a_{\bullet}) \\
&= \sum_{0 \le a_I, a_J, a_K, a_L < m} \prod_{X \in \{I, J, K, L\}} \rank \mathbb{V}_{0, |X|+1}^m(a_{\bullet}, -a_X) \cdot \deg \mathbb{D}_{0,4}^m(a_I, a_J, a_K, a_L) \\
&= \deg \mathbb{D}_{0,4}^m\left(\sum_{i \in I} a_i, \sum_{i \in J} a_i, \sum_{i \in K} a_i, \sum_{i \in L} a_i\right).
\end{align*}
The final equality follows from \cref{thm:lattice}. Also note that a more general version of the second statement appears in \cite[Proposition 2.7]{Fak12}, which also follows from a similar argument.

\begin{thm}\label{thm:confcal}(\cite[Lemma 5.1]{Fak12})
    For $0\le a_1\le a_2\le a_3\le a_4<m$,
    \[ \text{deg }\mathbb{D}_{0,4}^m(a_1,a_2,a_3,a_4)=\begin{cases}
        a_1 & \text{if }a_1+a_2+a_3+a_4=2m, a_2+a_3\ge a_1+a_4 \\
        m-a_4 &\text{if }a_1+a_2+a_3+a_4=2m, a_2+a_3\le a_1+a_4\\
        0 &\text{otherwise}.
    \end{cases} \]
\end{thm}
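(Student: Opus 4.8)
The plan is to reduce everything to the explicit first Chern class formula for the determinant of the sheaf of coinvariants on $\M{4}\cong\mathbb{P}^1$ and then carry out an arithmetic case analysis. First, by \cref{thm:lattice} (with $g=0$) the rank of $\mathbb{V}^m_{0,4}(a_\bullet)$ is $1$ when $\sum_i a_i\equiv 0 \bmod m$ and $0$ otherwise; in the latter case the sheaf vanishes, so $\mathbb{D}^m_{0,4}(a_\bullet)=0$ and the degree is $0$. Since $0\le a_i<m$, the congruence $\sum a_i\equiv 0\bmod m$ forces $\sum a_i\in\{0,m,2m,3m\}$, and the subcase $\sum a_i=0$ (all $a_i=0$) is trivial. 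Thus it remains to treat $\sum a_i\in\{m,2m,3m\}$, where $\mathbb{V}^m_{0,4}(a_\bullet)$ is a line bundle on $\mathbb{P}^1$ which, being base point free by \cref{thm:glogen}, has nonnegative degree.

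For these cases I would invoke the degree formula
\[
\deg \mathbb{D}^m_{0,4}(a_\bullet)=\sum_{i=1}^{4}\Delta_{a_i}-\sum_{\{i,j\}}\Delta_{\mu_{ij}},\qquad \Delta_a=\frac{a(m-a)}{2m},\quad \mu_{ij}=(a_i+a_j)\bmod m,
\]
where the second sum runs over the three boundary points of $\M{4}$ (equivalently, the three pairings of $\{1,2,3,4\}$) and $\Delta_a$ is the conformal weight of $W^a$ for $\mathfrak{sl}_m$ at level $1$. Here the unique internal module attached to the degeneration $\{i,j\}\mid\{k,l\}$ is the one making both three-point ranks nonzero, which by \cref{thm:lattice} is exactly $\mu_{ij}=(a_i+a_j)\bmod m$; the two sides of each pairing contribute dual modules of equal conformal weight, so each boundary point is counted once. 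A useful labor-saving symmetry is the duality $a_i\mapsto m-a_i$: it fixes each $\Delta_{a_i}$, replaces each $\mu_{ij}$ by its dual (again of equal weight), hence preserves the degree, while sending $\sum a_i=3m$ to $\sum a_i=m$ and fixing $\sum a_i=2m$. This reduces the analysis to $\sum a_i\in\{m,2m\}$.

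The remaining step is a direct computation. Writing $\sum a_i=2m$ and $a_1\le a_2\le a_3\le a_4$, one gets $\sum_i\Delta_{a_i}=m-\tfrac{1}{2m}\sum_i a_i^2$, and in the regime $a_1+a_4\le m$ all three representative pair sums $a_1+a_2,a_1+a_3,a_1+a_4$ are $\le m$, so $\mu_{1j}=a_1+a_j$ and the boundary contribution evaluates, via $\sum_{j=2}^{4}(a_1+a_j)(m-a_1-a_j)=2m^2-2a_1 m-\sum_i a_i^2$, to give $\deg=a_1$. The complementary regime $a_1+a_4\ge m$ follows by applying the $a_i\mapsto m-a_i$ duality, yielding $\deg=m-a_4$; since $\sum a_i=2m$ gives $a_2+a_3\ge a_1+a_4\iff a_1+a_4\le m$, this matches the two stated subcases. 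The case $\sum a_i=m$ is an analogous but simpler computation: there every pair sum is $\le m$, and the identity $\sum_{\{i,j\}}(a_i+a_j)(m-a_i-a_j)=2\sum_{i<j}a_ia_j=m^2-\sum_i a_i^2$ shows the boundary sum exactly cancels $\sum_i\Delta_{a_i}=\tfrac{1}{2m}(m^2-\sum_i a_i^2)$, giving $\deg=0$; the case $\sum a_i=3m$ then follows by duality.

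The main obstacle is not the bookkeeping above but the justification of the degree formula itself: the toolbox assembled in the excerpt (propagation of vacua, \cref{thm:factor}, the rank formula \cref{thm:lattice}, and global generation \cref{thm:glogen}) determines the rank, the three internal modules at the boundary, and the fact that $\mathbb{D}^m_{0,4}(a_\bullet)$ is a nonnegative multiple of the point class, but it does not by itself pin down the actual degree. To supply the degree formula I would appeal to the standard first Chern class computation for sheaves of coinvariants — realizing the degree as the sum of the residues of the KZ/TUY projective connection at the three regular singular points of $\M{4}$, or equivalently through the Mumford-type Chern character formula — specialized to the conformal weights $\Delta_a=a(m-a)/2m$; alternatively one could use the explicit lattice/theta realization available specifically at $\mathfrak{sl}_m$ level $1$. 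Note that one cannot instead appeal to an F-curve intersection formula of the kind mentioned just after \cref{thm:lattice}, since such a formula reduces F-curve intersections to precisely these $\M{4}$-degrees and is therefore unavailable for the base case.
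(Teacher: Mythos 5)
The paper gives no proof of this statement --- it is quoted verbatim as \cite[Lemma 5.1]{Fak12} --- and your argument is essentially Fakhruddin's original one: reduce to the rank-one case via the rank formula, apply the first Chern class formula for $\deg\mathbb{D}^m_{0,4}$ in terms of the level-one conformal weights $\Delta_a=a(m-a)/2m$ at the three boundary points, and carry out the case analysis. Your arithmetic checks out (including the reduction of $\sum a_i\in\{m,3m\}$ and of the regime $a_1+a_4\ge m$ via the $a_i\mapsto m-a_i$ symmetry), and you correctly flag that the single external input not supplied by the paper's stated toolbox is the degree formula itself, which is precisely \cite[Corollary 3.5]{Fak12}.
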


These two properties allow us to compute the intersection of an F-curve and a coinvariant divisor.

\begin{thm}\label{thm:basis}(\cite[Theorem 4.3]{Fak12})
        $\mathbb{D}_{0,n}^2\left(a_\bullet\right)$ is nontrivial if and only if $\sum_{i=1}^{n}a_i$ is even and at least four of the $a_i$'s are $1$. Furthermore, the nontrivial $\mathbb{D}_{0,n}^2\left(a_\bullet\right)$'s form a basis of $\text{Pic}\left(\M{n}\right)_{\Q}$. 
\end{thm}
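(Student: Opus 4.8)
The plan is to translate everything into intersection numbers against F-curves and then into a combinatorial linear-algebra problem. Write $m=2$, so each $a_i\in\{0,1\}$, and set $T=\{i:a_i=1\}\subseteq[n]$. Since numerical and linear equivalence coincide on $\M{n}$ and the F-curves span $\text{A}_1(\M{n})$, a class in $\text{Pic}(\M{n})_\Q$ is zero if and only if it has degree $0$ on every F-curve; thus it suffices to understand the pairings $\mathbb{D}_{0,n}^2(a_\bullet)\cdot F(I,J,K,L)$. Specializing \cref{thm:confcal} to $m=2$, the only nonzero value of $\deg\mathbb{D}_{0,4}^2$ on arguments in $\{0,1\}$ is $\deg\mathbb{D}_{0,4}^2(1,1,1,1)=1$; feeding this into \cref{eqn:lattice} (with the four entries $\sum_{i\in X}a_i$ read modulo $2$, i.e. as $|X\cap T|\bmod 2$) gives
\[
\mathbb{D}_{0,n}^2(a_\bullet)\cdot F(I,J,K,L)=
\begin{cases}
1 & \text{if }|I\cap T|,\,|J\cap T|,\,|K\cap T|,\,|L\cap T|\text{ are all odd},\\
0 & \text{otherwise}.
\end{cases}
\]
In particular this divisor depends only on the support $T$, and I will write $D_T$ for it.

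From this formula the nontriviality criterion is immediate. The class $D_T$ is trivial exactly when no partition of $[n]$ into four nonempty blocks cuts $T$ into four odd-cardinality pieces. Four odd positive integers sum to an even number that is at least $4$, so such a partition exists iff $|T|$ is even and $|T|\ge 4$: when $|T|\ge 4$ is even one places one element of $T$ in each block, distributes the remaining (even) number of elements of $T$ in pairs, and puts $T^c$ anywhere. Hence $D_T\neq 0$ iff $\sum_i a_i$ is even and at least four of the $a_i$ equal $1$, which is the first assertion. (Consistently, when $\sum a_i$ is odd \cref{thm:lattice} gives $\rank\mathbb{V}_{0,n}^2(a_\bullet)=0$, so the divisor is already trivial for this reason.)

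It remains to see that the nontrivial $D_T$ form a basis. Counting $a_\bullet\in\{0,1\}^n$ of even weight with at least four ones gives $2^{n-1}-1-\binom{n}{2}$, by subtracting the weight-$0$ and weight-$2$ vectors from the $2^{n-1}$ even-weight vectors; this is precisely the classical value of $\dim\text{Pic}(\M{n})_\Q$. So the theorem reduces to the linear independence of the $D_T$, equivalently to the statement that the matrix $\bigl(D_T\cdot F(I,J,K,L)\bigr)$, with rows the valid supports $T$ and columns the F-curves, has full row rank.

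The linear independence is the main obstacle, and I would attack it through the structure of this parity-incidence matrix. The basic building block is that for a $3$-subset $\{a,b,c\}$ the F-curve $F(\{a\},\{b\},\{c\},[n]\setminus\{a,b,c\})$ pairs to $1$ with $D_T$ exactly when $\{a,b,c\}\subseteq T$ (the large block automatically meets $T$ oddly because $|T|$ is even), so any relation $\sum_T c_T D_T=0$ forces $\sum_{T\supseteq\{a,b,c\}}c_T=0$ for every $3$-subset. To promote these ``up-set'' identities to $c_T\equiv 0$ I would bring in F-curves with two genuinely parity-constrained blocks and solve the resulting system triangularly for a size-refined ordering of the supports; an equivalent and perhaps cleaner bookkeeping is to encode a relation by the Walsh transform $\Phi(U)=\sum_T c_T(-1)^{|U\cap T|}$ and observe that \cref{eqn:lattice} forces a fourth finite difference of $\Phi$ along the block-unions of every $4$-partition to vanish, from which one inverts. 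As a consistency check, for $n=5$ the valid supports are the five $4$-subsets and the relevant F-curves give the vertex--edge incidence matrix of $K_5$, which has full rank because $K_5$ is non-bipartite. The genuine difficulty is to carry this out uniformly in $n$: the three-singleton curves alone yield only $\binom{n}{3}$ equations, far fewer than the number of supports for large $n$, so the crux is to select enough F-curves with larger blocks to make the incidence matrix nonsingular.
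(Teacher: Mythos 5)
The paper does not prove this statement at all: it is imported verbatim from Fakhruddin \cite[Theorem 4.3]{Fak12}, so there is no internal proof to compare yours against. Judged on its own terms, your proposal correctly and completely establishes two of the three ingredients. The reduction to F-curve pairings is legitimate (numerical and rational equivalence coincide on $\M{n}$ and F-curves span $\text{A}_1(\M{n})$), the specialization of \cref{thm:confcal} and \cref{eqn:lattice} to the parity formula for $D_T\cdot F(I,J,K,L)$ is right, the nontriviality criterion follows cleanly from the ``four odd parts'' argument, and the count $2^{n-1}-1-\binom{n}{2}$ does match $\operatorname{rank}\text{Pic}(\M{n})_\Q$.

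The genuine gap is the linear independence, which is the actual content of the theorem, and you acknowledge as much: the Walsh-transform reformulation and the ``triangular ordering'' are a framework, not an argument. Concretely, the constraints you extract are that certain signed sums of $\Phi(U)=\sum_T c_T(-1)^{|U\cap T|}$ over the $16$ block-unions of each $4$-partition vanish; but $\Phi$ is only probed at sets $U$ realizable as unions of blocks, and you never show these alternating sums force $\Phi\equiv 0$ (equivalently $c\equiv 0$) on the subspace of coefficients supported on even $T$ with $|T|\ge 4$. The $K_5$ incidence computation for $n=5$ and the observation that three-singleton curves give only $\binom{n}{3}$ equations confirm that the easy constraints are insufficient, but no uniform-in-$n$ choice of F-curves producing a nonsingular square submatrix is exhibited. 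Without that, the proposal proves only that the $D_T$ are nonzero and correctly counted, not that they form a basis; to close the gap you would either need to carry out the inversion of the parity-incidence system explicitly (for instance by exhibiting, for each valid $T$, an F-curve pairing to $1$ with $D_T$ and to $0$ with all $D_{T'}$ for $T'$ later in a suitable order), or fall back on Fakhruddin's original argument.
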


This theorem provides a basis of the Picard group of $\M{n}$ which behaves well under the projection and clutching maps between moduli space of stable rational curves.

\subsection{Semiample Line Bundles}

The following simple proposition is not used in this paper, but it reveals a connection between the main theorems (\cref{thm:charcont} and \cref{thm:Knumori}) and the semiampleness of certain line bundles on $\M{n}$. It thus provides additional context for our main results.

\begin{prop}\label{prop:nefsem}
Let $X$ be a normal $\mathbb{Q}$-factorial variety, $f:X\to Y$ a contraction and $F=\NE{f}$. If $\mathrm{Pic}(Y) = F^\perp$, then any element in the interior of $F^\perp \cap \mathrm{Nef}(X)$ is semiample. Moreover, if $F^\perp = \mathrm{span}\,\left( F^\perp \cap \mathrm{Nef}(X)\right)$ (for example, if $\mathrm{Nef}(X)$ is polyhedral), then the converse also holds.
\end{prop}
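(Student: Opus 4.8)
The plan is to translate the entire statement into the linear algebra of the pullback map $f^\ast\colon \mathrm{Pic}(Y)_\Q \to \mathrm{Pic}(X)_\Q$ and then feed in two geometric inputs: Kleiman's ampleness criterion for the forward implication, and the rigidity theorem (discussed in \cref{sec:intro}) for the converse. First I would record the basic dictionary. Since $f$ is a contraction, $f_\ast\colon \mathrm{A}_1(X)\to \mathrm{A}_1(Y)$ is surjective, so $f^\ast$ is injective on numerical classes, and the hypothesis $\mathrm{Pic}(Y)=F^\perp$ says precisely that its image is $F^\perp$. The pullback of a nef class is nef and kills $F=\NE{f}$, while conversely any $L\in F^\perp\cap \mathrm{Nef}(X)$ is of the form $f^\ast M$ with $M$ nef (lift curves on $Y$ to $X$ using surjectivity of $f_\ast$ and apply the projection formula). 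Hence $f^\ast$ restricts to a linear isomorphism
\[ f^\ast\colon \mathrm{Nef}(Y)\ \xrightarrow{\ \sim\ }\ F^\perp\cap \mathrm{Nef}(X)=:G,\]
carrying the interior of $\mathrm{Nef}(Y)$ (inside $\mathrm{Pic}(Y)_\Q$) onto the interior of $G$ taken inside $F^\perp$. I would also record the dual-face identity: because $F$ is the extremal face of $\NE{X}$ cut out by $G$, a class $L\in\mathrm{relint}(G)$ satisfies $L^\perp\cap\NE{X}=F$ exactly. This is transparent when $\NE{X}$ is polyhedral, via the order-reversing bijection between faces of $\NE{X}$ and of $\mathrm{Nef}(X)$, which is the situation flagged in the statement.

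For the forward implication I would take $L$ in the interior of $G$ and write $L=f^\ast M$. For every nonzero $c\in\NE{Y}$, choose $c'\in\NE{X}$ with $f_\ast c'=c$; then $c'\notin\ker f_\ast\supseteq F$, so $c'\in\NE{X}\setminus F$, and the dual-face identity gives $M\cdot c=L\cdot c'>0$. Thus $M$ is strictly positive on $\NE{Y}\setminus\{0\}$, so $M$ is ample by Kleiman's criterion, $Y$ is projective, and $L=f^\ast M$ is the pullback of an ample class, hence semiample.

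For the converse I would argue directly through rigidity. Assume $F^\perp=\mathrm{span}(G)$, so that the interior of $G$ is a nonempty open subset of $F^\perp$, and suppose every such interior class is semiample. Pick $L$ in the interior of $G$; semiampleness produces a contraction $\phi\colon X\to Z$ onto a projective variety with $mL=\phi^\ast A$ for some ample $A$ and $m>0$, and a curve is $\phi$-contracted exactly when it meets $L$ trivially, so $\NE{\phi}=L^\perp\cap\NE{X}=F=\NE{f}$ by the dual-face identity. Since $f$ and $\phi$ are contractions with the same relative cone of curves, the rigidity theorem yields an isomorphism $Y\simeq Z$ compatible with $f$ and $\phi$; consequently $L\in f^\ast\mathrm{Pic}(Y)_\Q$. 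As $L$ ranges over the interior of $G$ it spans $F^\perp$, so $f^\ast\mathrm{Pic}(Y)_\Q=F^\perp$, i.e. $\mathrm{Pic}(Y)=F^\perp$.

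The main obstacle is not the linear algebra but the two places where numerical positivity must be upgraded to actual geometry. In the forward direction I must be careful that strict positivity on $\NE{Y}\setminus\{0\}$ genuinely yields ampleness (and hence projectivity of $Y$): this is exactly Kleiman's criterion, and it is the step that fails for proper but non-projective targets, so it deserves explicit attention. In the converse, the delicate points are the dual-face identity $L^\perp\cap\NE{X}=F$ for $L$ in the relative interior of $G$, and the passage from the morphism attached to a semiample class to a genuine contraction to which the rigidity theorem applies; both are clean when $\NE{X}$ is polyhedral, which is precisely why that hypothesis is singled out.
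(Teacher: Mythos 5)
Your overall strategy coincides with the paper's: identify $F^\perp\cap\mathrm{Nef}(X)$ with $f^\ast\mathrm{Nef}(Y)$ so that interior points are pullbacks of ample classes, and for the converse use the contraction attached to a semiample class together with rigidity (the paper invokes \cite[Proposition 1.14(b)]{Deb01}, which only needs the inclusion $\NE{f}\subseteq\NE{f'}$ rather than the equality you establish). The converse as you write it is fine; note that the ``dual-face identity'' $L^\perp\cap\NE{X}=F$ for $L$ in the relative interior does not actually need polyhedrality of $\NE{X}$: once $Y$ is projective, pulling back an ample class of $Y$ gives an element of $F^\perp\cap\mathrm{Nef}(X)$ that is strictly positive on $\NE{X}\setminus F$, and the rest is the standard relative-interior argument.

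The one step that is not justified as written is in the forward direction: ``for every nonzero $c\in\NE{Y}$, choose $c'\in\NE{X}$ with $f_\ast c'=c$.'' What is formal is $f_\ast\mathrm{NE}(X)=\mathrm{NE}(Y)$ for the \emph{non-closed} cones; passing to closures only gives $\overline{f_\ast\NE{X}}=\NE{Y}$, and the image of a closed pointed cone under a linear map need not be closed (lifts of a convergent sequence of effective classes can escape to infinity in directions lying in $\NE{f}$). Since Kleiman's criterion genuinely requires positivity on all of $\NE{Y}\setminus\{0\}$ and not merely on classes of irreducible curves, this lifting step is a real gap. It is, however, immediately repairable with material already in your first paragraph: you have observed that $f^\ast$ is a linear isomorphism of $\mathrm{Pic}(Y)_\Q$ onto $F^\perp$ carrying $\mathrm{Nef}(Y)$ onto $G$, hence carrying relative interiors to relative interiors; so $L$ in the interior of $G$ forces $M$ to lie in the interior of $\mathrm{Nef}(Y)$, which is the ample cone for projective $Y$. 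This is exactly the paper's (shorter) argument, and it bypasses any lifting of curve classes. Finally, be careful with the order of quantifiers around projectivity of $Y$: you deduce ``$Y$ is projective'' from Kleiman's criterion, but the implication ``strictly positive on $\NE{Y}\setminus\{0\}$ $\Rightarrow$ ample'' is only valid for projective (not merely proper) $Y$, so projectivity of the target must be taken as part of the definition of contraction here rather than as a conclusion.
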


We note that here “interior” refers to the relative interior of $F^\perp \cap \mathrm{Nef}(X)$ as a face of the nef cone, not as a subset of $\mathrm{Pic}(X)$.

\begin{proof}
The condition implies $F^\perp \cap \mathrm{Nef}(X) = f^\ast \mathrm{Nef}(Y)$. Hence, its interior consists of pullbacks of ample divisors on $Y$, which are semiample. For the converse, under the assumption $F^\perp = \mathrm{span}\, F^\perp \cap \mathrm{Nef}(X)$, it suffices to show that any line bundle $L$ lying in the interior of $F^\perp \cap \mathrm{Nef}(X)$ belongs to $f^\ast \mathrm{Pic}(Y)$. Since $L$ is semiample by assumption, it defines a contraction $f': X \to Y'$. Because $L \in F^\perp$, the morphism $f'$ contracts precisely the curves in $F$. Thus, by \cite[Proposition 1.14 (b)]{Deb01}, $f'$ factors through $f$, and in particular, $L \in f^\ast \mathrm{Pic}(Y)$.
\end{proof}

\section{Characterizing Line Bundles on Special Contractions via Contracting Curves}\label{sec:char}

This section is devoted to the characterization of line bundles on certain special contractions of $\M{n}$ via coinvariant divisors. In \cref{subsec:cont}, we introduce three important contractions: Knudsen's, Keel's, and Kapranov's constructions. Moreover, we consider a common generalization of Knudsen's and Keel's constructions, given by the fiber product of two projections. We also provide further details on this construction. In \cref{subsec:proof}, we establish a generalized version of \cref{thm:charcont} for the contractions introduced above. 

\begin{rmk}\label{rmk:charp}
    The main theorems of this section are \cref{thm:charKap}, \cref{thm:charKnu}, and \cref{thm:chargen}. The proofs of these main theorems utilize coinvariant divisors, which are only defined when the base field is $\mathbb{C}$. However, the main theorems also hold over an arbitrary base field. These theorems are purely statements about the Chow ring of $\M{n}$, which does not depend on the base field, as established by \cite{Ke92}. Therefore, if the main theorems hold over $\mathbb{C}$, then they also hold over any field. Accordingly, we will work over $\mathbb{C}$ for the remainder of this chapter.
\end{rmk}

\subsection{Contractions of \texorpdfstring{$\M{n}$}{TEXT}}\label{subsec:cont}
We begin our discussion by recalling the contractions $f_{\text{Knu}}$, $f_{\text{Keel}}$, and $f_{\text{Kap}}$. The definitions of $f_{\text{Knu}}$ and $f_{\text{Keel}}$ are straightforward. Let $\pi_{n-1},\pi_n:\M{n}\to \M{n-1}$ be the projection maps, as described in \cref{subsec:nota}. The product of these maps defines $\pi_{n-1}\times\pi_n:\M{n}\to \M{n-1}\times\M{n-1}$. Note that the image of $\pi_{n-1}\times\pi_n$ comes from a stable rational curve with $n$ marked points, which remembers the first $n-2$ points. Therefore, this morphism factors through $\M{n-1}\times_{\M{n-2}}\M{n-1}$. We denote the induced map as $f_{\text{Knu}}:\M{n}\to\M{n-1}\times_{\M{n-2}}\M{n-1}$. This is \textbf{Knudsen's construction}  as described in \cite{Knu83}. Restricting $f_{\text{Knu}}$ to the locus parametrizing smooth objects of both the domain and codomain shows that it is birational, and indeed, in \cite[Definition 2.3]{Knu83}, Knudsen realized $\M{n}$ as a blow up of $\M{n-1}\times_{\M{n-2}}\M{n-1}$, with $f_{\text{Knu}}$ being the corresponding map. 

\textbf{Keel's construction}, as described in \cite{Ke92}, is a birational morphism defined by the product of projection maps $f_{\text{Keel}}=\pi_n\times \pi_{\left\{1,2,3,n\right\}}:\M{n}\to\M{n-1}\times\M{4}$. The \textbf{Kapranov's construction} $f_{\text{Kap}}:\M{n}\to \mathbb{P}^{n-3}$, described in \cite{Ka93}, is more complex to define. Kapranov fixes $n$ points $p_1,\cdots, p_n$ of $\mathbb{P}^{n-2}$ in general position, and identifies the (closure of) the moduli space of Veronese curves passing through $p_1,\cdots, p_n$, which is a subvariety of the Hilbert scheme of curves in $\mathbb{P}^{n-2}$, with $\M{n}$. The morphism $f_{\text{Kap}}:\M{n}\to \mathbb{P}^{n-3}$ is then defined by sending a Veronese curve to its tangent space of $p_n$.

The codomains of these morphisms are normal varieties. This is straightforward for $f_{\text{Keel}}$ and $f_{\text{Kap}}$ since their codomains are smooth. Although $\M{n-1}\times_{\M{n-2}}\M{n-1}$ is not a smooth variety, its singularities locally resemble a product of an affine space and the cone over a smooth quartic surface, as described in \cite[Section 1]{Ke92}. Hence $\M{n-1}\times_{\M{n-2}}\M{n-1}$ is normal. As mentioned, $f_{\text{Knu}}$, $f_{\text{Keel}}$, and $f_{\text{Kap}}$ are birational; therefore, by the Zariski Main Theorem, they are contractions of $\M{n}$.

Define the set of F-curves
\begin{align*}
    &F_{\text{Kap}}:= \left\{F(I,J,K,L)\ |\ n\in K\text{ and }|K|>1  \right\},\\
    &F_{\text{Keel}}:= \left\{F(I,J,K,L)\ |\ K=\left\{n\right\}, 1,2,3\not\in L \right\},\\
    &F_{\text{Knu}}:= \left\{F(I,J,K,L)\ |\ K=\left\{n\right\}, L=\left\{n-1\right\}  \right\}.
\end{align*}
One can verify that these constitute precisely the F-curves contracted by $f_{\text{Kap}}$, $f_{\text{Keel}}$, and $f_{\text{Knu}}$.

We next introduce a general set of contractions, which contains $f_{\text{Keel}}$ and $f_{\text{Knu}}$ as special cases. Let $S,T$ be the subsets of $\left\{1,2,\cdots, n\right\}$ such that $|S|,|T|\ge 3$. As in the case of Knudsen's and Keel's constructions, the product of projection maps $\pi^S, \pi^T$ defines a morphism $f_{S,T}:\M{n}\to \M{S}\times_{\M{S\cap T}}\M{T}$. If $|S\cap T|\le 2$, then define $\M{S\cap T}=\text{Spec }k$.

The morphisms $f_{S,T}$ are not originally defined in this paper; they have been studied in preceding papers. For instance, analogous morphisms for the higher genus case are considered in \cite[proof of Theorem 0.9]{GKM02}. Then, \cite{GF03} studied them and proved that any fibration of $\M{6}$ factors through $\M{6}\to \M{4}\times \M{4}$. We remark that the set of $F$-curves contracted by $f_{S,T}$ is given by
\[ F_{S,T}:= \left\{F(I,J,K,L)\ |\ I\subseteq S^c\cap T^c \text{ or }I\subseteq S^c, J\subseteq T^c  \right\}. \]

\begin{prop}\label{thm:gencont}
    $f_{S,T}$ is a contraction.
\end{prop}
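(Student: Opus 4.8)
The plan is to show that $f=f_{S,T}$ is surjective with connected fibres onto a normal variety, which is exactly what it means to be a contraction. First I would reduce to the case $S\cup T=[n]$: the forgetful map $\pi_{(S\cup T)^c}\colon \M{n}\to \M{S\cup T}$ is a flat proper morphism with connected fibres, hence a contraction, and $f_{S,T}$ factors as $f'\circ \pi_{(S\cup T)^c}$, where $f'$ is the analogous morphism out of $\M{S\cup T}$. Since a composition of contractions is a contraction (if $h_\ast\mathcal{O}=\mathcal{O}$ and $g_\ast\mathcal{O}=\mathcal{O}$ with normal targets, then $(g\circ h)_\ast\mathcal{O}=g_\ast h_\ast\mathcal{O}=\mathcal{O}$), it suffices to treat the case $S\cup T=[n]$. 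Writing $U=S\cap T$ and $Z=\M{S}\times_{\M{U}}\M{T}$, the morphism $\pi^S\times\pi^T\colon\M{n}\to\M{S}\times\M{T}$ factors through $Z$, because forgetting from $S$ down to $U$ and from $T$ down to $U$ both coincide with forgetting directly from $[n]$ to $U$; this factorization is $f_{S,T}$, and it is proper since $\M{n}$ is proper.

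The step I expect to be the main obstacle is the \emph{normality} of the target $Z$. If $|U|\le 2$ then $\M{U}=\mathrm{Spec}\,k$ and $Z=\M{S}\times\M{T}$ is smooth, so assume $|U|\ge 3$. Both $\M{S}\to\M{U}$ and $\M{T}\to\M{U}$ are flat forgetful morphisms, so by base change $Z\to\M{T}$ is flat with fibres equal to those of $\M{S}\to\M{U}$; these fibres are towers of nodal curves, hence Cohen--Macaulay, and since $\M{T}$ is smooth this makes $Z$ Cohen--Macaulay, giving Serre's condition $S_2$. The delicate point is $R_1$: one must check that the singular locus of $Z$ has codimension at least $2$. Étale-locally near a point of $Z$ the two families smooth the nodes of the fibre over $\M{U}$ independently, so $Z$ looks like a product of an affine space with finitely many copies of the fibre-product singularity that Keel computes for $\M{n-1}\times_{\M{n-2}}\M{n-1}$ (cf. \cite[Section 1]{Ke92}), which is already known to be normal. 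A product of such normal varieties (one factor smooth) is normal, so $R_1$ holds and $Z$ is normal; the same local analysis shows $Z$ is a local complete intersection, consistent with \cref{thm:Knugeo}.

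With $Z$ normal in hand, I would finish by Stein factorization. A dimension count gives $\dim Z=|S|+|T|-|U|-3=|S\cup T|-3=n-3$ when $|U|\ge 3$ and $\dim Z=n-4$ when $|U|=2$; in either case $Z$ is irreducible, being flat over the irreducible $\M{T}$ with irreducible generic fibre, so the proper (hence closed) image of $f_{S,T}$ is a closed irreducible subset of the same dimension and therefore equals $Z$, which gives surjectivity. For connectedness of the generic fibre, over a point $(C_S,C_T)$ with both curves smooth one reconstructs $C$ by gluing $C_S$ and $C_T$ along their common $U$-marked points: when $|U|\ge 3$ this pins down $C$ uniquely, while when $|U|=2$ the residual $\mathbb{G}_m$ of identifications produces an irreducible one-dimensional fibre, so in all cases the generic fibre is connected. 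Taking the Stein factorization $\M{n}\to Z'\to Z$, the finite morphism $Z'\to Z$ is then generically one-to-one, hence birational; since $Z$ is normal, Zariski's Main Theorem forces $Z'\xrightarrow{\sim}Z$, so $f_{S,T}$ has connected fibres and $f_{S,T\,\ast}\mathcal{O}_{\M{n}}=\mathcal{O}_Z$. Therefore $f_{S,T}$ is a contraction.
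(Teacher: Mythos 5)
Your overall strategy mirrors the paper's: reduce to $S\cup T=[n]$, prove the target is Cohen--Macaulay (hence $S_2$) and regular in codimension one, apply Serre's criterion, and finish with the Zariski Main Theorem. The gap is in the step you yourself flag as the main obstacle, the $R_1$ condition. You assert that, \'etale-locally, ``the two families smooth the nodes of the fibre over $\M{S\cap T}$ independently,'' so that $Z=\M{S}\times_{\M{S\cap T}}\M{T}$ is a product of an affine space with copies of Keel's cone singularity. This is precisely the content that needs proof, and it is not immediate: when $|S\setminus(S\cap T)|>1$ the forgetful map $\M{S}\to\M{S\cap T}$ is a composition of universal curves, its critical locus is a union of boundary strata of varying codimension, and the local model of the fiber product at a point where several of these strata interact is not obviously a product of independent node-smoothings $\{xy=t\}\times_{\{t\}}\{zw=t\}$. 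Without that local description your $R_1$ claim is unsupported (and, as written, your argument is slightly circular: if the local models were known to be normal you would not need $S_2+R_1$ at all). The paper fills exactly this hole by a different and cleaner device: it first shows that a single forgetful map $\pi_i$ is \emph{smooth} away from a closed subset of codimension $2$ (the locus where the forgotten point sits at a node), and then proves normality of the fiber product by induction on $|T\setminus S|$, peeling off one index at a time; smoothness over a normal base away from codimension $2$ gives $R_1$ with no local computation of the singularity. If you want to keep your direct approach, you would need to actually establish the \'etale-local product decomposition, which is essentially equivalent to redoing Keel's local analysis in the multi-point setting.

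The rest of your argument is sound and in one place genuinely different from the paper: for surjectivity and connectedness of fibers you use a uniform Stein-factorization argument, checking that the generic fiber is a point when $|S\cap T|\ge 3$ and a connected orbit of automorphisms of $\mathbb{P}^1$ fixing the common marked points when $|S\cap T|\le 2$ (you only write out $|S\cap T|=2$, but the cases $|S\cap T|\le 1$ are identical with a larger connected group). The paper instead treats $|S\cap T|\ge 3$ by birationality plus ZMT and handles $|S\cap T|\le 2$ by factoring $f_{S,T}$ through an intermediate fiber product $\M{S}\times_{\M{R\cup(S\cap T)}}\M{R\cup T}$ with $|R\cup(S\cap T)|=3$, writing the second map as a flat base change of a projection. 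Your route avoids that auxiliary factorization and is a legitimate alternative, conditional on the normality of $Z$ being established first.
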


\begin{proof}
    \textbf{Step 1. }$\M{S} \times_{\M{S \cap T}} \M{T}$ is a local complete intersection, and therefore Cohen-Macaulay.
    
    We have a pullback diagram
    \[\begin{tikzcd}
    \M{S}\times_{\M{S\cap T}}\M{T} \arrow[r, "i"]\arrow[d]& \M{S}\times\M{T}\arrow[d, "\pi^{S\cap T}\times \pi^{S\cap T}"]\\
    \M{S\cap T} \arrow[r, "\Delta"]& \M{S\cap T}\times \M{S\cap T}.
    \end{tikzcd}\]
    The smoothness of $\M{S\cap T}$ implies that $\Delta$ is a regular embedding. Since $\pi^{S\cap T}\times \pi^{S\cap T}$ is flat, $i$ is also a regular embedding. Therefore, the smoothness of $\M{S}\times\M{T}$ ensures that $\M{S} \times_{\M{S \cap T}} \M{T}$ is a local complete intersection.

    \textbf{Step 2. }For any projection $\pi_i: \M{n} \to \M{n-1}$, there exists a closed subscheme $Z \subseteq \M{n}$ of codimension $2$ such that $\pi_i$ is smooth on $\M{n} \setminus Z$. 
    
    For any partition $\left\{1,2,\cdots, n\right\} \setminus \left\{i\right\} = I \amalg J$ such that $|I|, |J| \ge 2$, define $\xi_{I,J}: \M{I+\circ} \times \M{J+\bullet} \times \M{i+\circ+\bullet} \to \M{n}$ to be the clutching map identifying $\circ, \bullet$. Let $Z$ be the union of the images of $\xi_{I,J}$'s for every partition of $I, J$. Then, the fiber of the map $\pi_i: \M{n} \setminus Z \to \M{n-1}$ at $x \in \M{n-1}$ corresponds to the smooth locus of the stable curve corresponding to $x$. Since $\pi_i$ is a flat morphism with smooth fibers, it is a smooth morphism.

    \textbf{Step 3.} $\M{S}\times_{\M{S\cap T}}\M{T}$ is normal.
    
    We use induction on $|T\setminus S|$. If $T\subseteq S$, then there is nothing to prove. Assume the statement for $|T\setminus S|=n$ and prove the case $|T\setminus S|=n+1$. Choose $i\in T\setminus S$. Then we have a pullback square
    \[\begin{tikzcd}
    \M{S}\times_{\M{S\cap T}}\M{T} \arrow[r]\arrow[d, "\pi_i"]& \M{T}\arrow[d, "\pi_i"]\\
    \M{S}\times_{\M{S\cap T}}\M{T-i} \arrow[r]& \M{T-i}.
    \end{tikzcd}\]
    Since $\M{S}\times_{\M{S\cap T}}\M{T-i} \to \M{T-i}$ is flat, by Step 2, there exists a closed subscheme $Z\subseteq \M{S}\times_{\M{S\cap T}}\M{T}$ of codimension 2 such that $\pi_i: \M{S}\times_{\M{S\cap T}}\M{T}\setminus Z\to \M{S}\times_{\M{S\cap T}}\M{T-i}$ is smooth. By the induction hypothesis, $\M{S}\times_{\M{S\cap T}}\M{T-i}$ is normal, so $\M{S}\times_{\M{S\cap T}}\M{T}\setminus Z$ is also normal. Since $Z$ is of codimension $2$, $\M{S}\times_{\M{S\cap T}}\M{T}$ is regular in codimension $1$. By Step 1 and Serre's criterion for normality, $\M{S}\times_{\M{S\cap T}}\M{T}$ is also normal. 

    \textbf{Step 4.} $f_{S,T}:\M{n}\to \M{S}\times_{\M{S\cap T}}\M{T}$ is a contraction.
    
    Since the projections $\pi_i$ are obviously contractions, we may assume that $S\cup T=\left\{1,2,\cdots, n\right\}$. First, assume $|S\cap T|\ge 3$. Then, as we did at the beginning of this subsection, $f_{S,T}$ is birational. Hence, by the Zariski Main Theorem, and Step 3, $f_{S,T}$ is a contraction. Now assume $|S\cap T|\le 2$. Since $|S|\ge 3$, choose $R\subseteq S\setminus T$ so that $|R\cup (S\cap T)|=3$. Then $f_{S,T}$ can be decomposed into $\M{n}\to \M{S}\times_{\M{R\cup (S\cap T)}}\M{R\cup T}\to \M{S}\times_{\M{S\cap T}}\M{T}$. Since $|R\cup (S\cap T)|=3$, the first map is a contraction. Note that, by $|R\cup (S\cap T)|=3$, these fiber products are indeed just products. Hence, the second map is a base change of $\M{R\cup T}\to \M{T}$ by $\M{S}\to \text{Spec }k$. Since the former one is a contraction and the latter one is flat, the second map is a contraction. Hence, $f_{S,T}$ is a composition of contractions, so it is indeed a contraction.    
\end{proof}

\subsection{Proof of  \texorpdfstring{\cref{thm:charcont}}{TEXT}}\label{subsec:proof}

The main goal of this section is to prove \cref{thm:charcont}. We analyze the cases of $f_{\text{Kap}}$ and $f_{S,T}$ separately. As noted in \cite[Section 2.C]{Ka93}, the morphism $f_{\text{Kap}} \colon \M{n} \to \mathbb{P}^{n-3}$ corresponds to the line bundle $\psi_n$. Thus, to prove the theorem for $f_{\text{Kap}}$, it suffices to characterize $\psi_n$ via its intersection with $F_{\text{Kap}}$, which is accomplished in \cref{thm:charKap}. For $f_{S,T}$, we will show in \cref{thm:chargen} that the following sequence is exact:
\[
\text{Pic}(\M{S}) \times \text{Pic}(\M{T}) \xrightarrow{\pi^{S,\ast}+ \pi^{T,\ast}} \text{Pic}(\M{n}) \xrightarrow{c} \Z^{F_{S,T}},
\]
where $c$ denotes the intersection pairing. Since $\M{S} \times_{\M{S \cap T}} \M{T}$ is the image of the morphism $\M{n} \to \M{S} \times \M{T}$, this exactness is sufficient to deduce \cref{thm:charcont}.

First, we characterize line bundles on Kapranov's construction via curves it contracts. To prove \cref{thm:charKap}, we use the basis in \cref{thm:basis}. Let $A_n$ be the set of elements of $\Z_2^n$ whose number of $1$'s is even and at least $4$. Then by \cref{thm:basis}, 
    \[ \left\{\mathbb{D}_{0,n}^2\left(a_\bullet\right)\ |\ a_\bullet\in A_n \right\} \]
is a basis of $\text{Pic}(\M{n})_\Q$.

\begin{thm}\label{thm:charKap}
    Let $\mathcal{L}$ be a line bundle on $\overline{\mathrm{M}}_{0,n}$. Then $\mathcal{L}$ is a constant multiple of $\psi_n$ if and only if the restriction of $\mathcal{L}$ to $\Delta_{0, \left\{i,n\right\}}$ is trivial for every $1\le i\le n-1$, or equivalently, its intersection number with F-curves in $F_{\text{Kap}}$ is zero.
\end{thm}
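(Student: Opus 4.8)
The plan is to work in the basis of $\mathfrak{sl}_2$, level $1$ coinvariant divisors from \cref{thm:basis}, and to reduce the characterization of $\psi_n$ to a purely linear-algebraic identity among the coefficients of an expansion in this basis. First I would record the two easy directions and the setup: the forgetful map $\pi_n$ contracts exactly $\Delta_{0,\{i,n\}}$ (the F-curves in $F_{\text{Kap}}$ with $K=\{i,n\}$ being replaced by the equivalent boundary description), and since $f_{\text{Kap}}$ corresponds to $\psi_n$, any constant multiple of $\psi_n$ clearly has zero intersection with every curve in $F_{\text{Kap}}$. The content is the converse. The equivalence between ``restriction to $\Delta_{0,\{i,n\}}$ is trivial'' and ``intersection with $F_{\text{Kap}}$ is zero'' I would handle by noting that the classes $F(I,J,K,L)$ with $n\in K$, $|K|>1$ sweep out $\Delta_{0,\{i,n\}}$ as $i$ ranges, so triviality of the restriction is detected exactly by these intersection numbers.

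For the main implication, I would write an arbitrary $\mathcal{L}$ in the coinvariant-divisor basis,
\[
\mathcal{L}=\sum_{a_\bullet\in A_n} c_{a_\bullet}\,\mathbb{D}_{0,n}^2(a_\bullet),
\]
and compute the constraints imposed by $\mathcal{L}\cdot F(I,J,K,L)=0$ for all $F(I,J,K,L)\in F_{\text{Kap}}$. By \cref{thm:lattice} and \cref{thm:confcal}, each such intersection number is explicitly computable: it equals $\deg\mathbb{D}_{0,4}^2$ of the four partial sums modulo $2$, and for $m=2$ this degree is $1$ precisely when all four partial sums are odd and $0$ otherwise. Thus each constraint becomes a $\{0,1\}$-linear condition on the $c_{a_\bullet}$, indexed by the relevant partitions. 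The goal is to show that the solution space of this linear system is one-dimensional and spanned by the basis expansion of $\psi_n$. So I would first expand $\psi_n$ itself in the coinvariant basis (using, e.g., the known intersection numbers $\psi_n\cdot F(I,J,K,L)$, which vanish on $F_{\text{Kap}}$ and are positive on the complementary F-curves) to identify the candidate one-dimensional solution, and then argue that the $F_{\text{Kap}}$-constraints force every $\mathcal{L}$ in the kernel to be proportional to it.

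The main obstacle is the linear algebra: showing that the $\{0,1\}$-system cut out by the $F_{\text{Kap}}$-intersections has exactly a one-dimensional kernel inside the $|A_n|$-dimensional space, with no spurious solutions beyond multiples of $\psi_n$. The dimension of $\mathrm{Pic}(\mathbb{P}^{n-3})_\Q$ is $1$, so abstractly we expect $f_{\text{Kap}}^\ast\mathrm{Pic}$ to be one-dimensional, but making this precise from the coinvariant side requires carefully organizing the constraints by the index $i$ (equivalently, by the marked point $n$ together with each $i$) and exploiting the mod-$2$ parity structure of the partial sums. I would manage this by grouping basis elements $a_\bullet$ according to the value $a_n$ and the parities of the remaining coordinates, using the restriction to each $\Delta_{0,\{i,n\}}$ — which by \cref{thm:factor} decomposes $\mathbb{D}_{0,n}^2(a_\bullet)$ into coinvariant divisors on the two lower $\overline{\mathrm{M}}_{0,k}$ factors — to extract a clean recursion or triangular structure on the coefficients. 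This is the step where the functoriality of coinvariant divisors under clutching (\cref{thm:propvac} and \cref{thm:factor}) does the real work, since it converts the geometric condition on restrictions into an explicit, solvable system rather than an opaque one; the remaining effort is verifying that this system admits no solutions other than $\psi_n$ up to scalar.
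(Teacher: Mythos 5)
Your overall strategy coincides with the paper's: expand $\mathcal{L}$ in the $\mathfrak{sl}_2$ level~$1$ basis of \cref{thm:basis}, convert the vanishing conditions into linear constraints on the coefficients via \cref{thm:lattice}, \cref{thm:confcal} and \cref{thm:factor}, and show the solution space is one-dimensional. The computational inputs you cite are correct (in particular, $\deg\mathbb{D}^2_{0,4}$ equals $1$ exactly when all four entries are $1$, and $0$ otherwise). However, the decisive step --- proving that the resulting $\{0,1\}$-system has exactly a one-dimensional kernel --- is precisely what you defer (``the remaining effort is verifying that this system admits no solutions other than $\psi_n$ up to scalar''), and the sketch you offer (grouping by $a_n$ and parities to ``extract a clean recursion or triangular structure'') is not yet an argument. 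The paper resolves this by replacing the curve-by-curve conditions with the equivalent conditions $s_i^\ast\mathcal{L}=0$ for the sections $s_i:\M{n-1}\to\M{n}$, computing
\[
s_i^\ast \mathbb{D}_{0,n}^2(a_1,\ldots,a_n)=\mathbb{D}_{0,n-1}^2(a_1,\ldots,a_{i-1},a_i+a_n \bmod 2,a_{i+1},\ldots,a_{n-1})
\]
via \cref{thm:factor}, and reading off the explicit relation $c(a_1,\ldots,a_i,\ldots,a_n)=-c(a_1,\ldots,a_i+1 \bmod 2,\ldots,a_n+1\bmod 2)$ whenever the pulled-back index lies in $A_{n-1}$; iterating these relations and inducting on the number of zeros in $(a_\bullet)$, with a case split on the parity of $n$, pins every coefficient to $(-1)^{a_n+1}c(1,\ldots,1)$. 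Without some such explicit mechanism your plan does not establish one-dimensionality, and the ``expected'' count from $\dim\mathrm{Pic}(\mathbb{P}^{n-3})_\Q=1$ cannot substitute for it: a priori the kernel of the constraint system contains $f_{\text{Kap}}^\ast\mathrm{Pic}(\mathbb{P}^{n-3})_\Q$ but could be strictly larger, and ruling that out is the whole content of the theorem.

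Two smaller points. First, expanding $\psi_n$ in the coinvariant basis ahead of time, as you propose, is itself nontrivial (the paper obtains $\psi_n=2^{-(n-4)}\sum(-1)^{a_n+1}\mathbb{D}^2_{0,n}(a_\bullet)$ only as a byproduct, in \cref{rmk:inv2}); it is easier to use only that $\psi_n$ lies in $\bigcap_i\ker s_i^\ast$, which is standard, and then show that intersection is one-dimensional. Second, the statement concerns integral line bundles, so after the rational argument one must still check the proportionality constant is an integer; the paper does this by intersecting with an F-curve $F\notin F_{\text{Kap}}$, for which $\psi_n\cdot F=1$. Your proposal omits this step.
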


For the definition of $\Delta_{0,\{i,n\}}$, we refer the reader to \cref{subsec:nota}.

\begin{proof}
    As in \cref{subsec:nota}, let $s_i:\M{n-1}\to \M{n}$ be the $i$th universal section. Then $\mathcal{L}$ is trivial on $\Delta_{0, \left\{i,n\right\}}$ if and only if $s_i^\ast\mathcal{L}=0$ for $1\le i\le n-1$. Note that, as mentioned in \cref{subsec:fconj}, if a line bundle on $\M{n}$ intersects trivially with every F-curve, then such a line bundle is trivial. Since $F_{\text{Kap}}$ are exactly the F-curves contained in $\Delta_{0, \left\{i,n\right\}}$ for $1\le i\le n-1$, it is enough to prove : if $s_i^\ast\mathcal{L}=0$ for $1\le i\le n-1$, then $\mathcal{L}$ is a constant multiple of $\psi_n$.
    
    We first demonstrate this for $\text{Pic}(\M{n})\otimes \Q$. That is, if an element $\mathcal{L}\in \text{Pic}(\M{n})\otimes \Q$ satisfies the condition, then $\mathcal{L}$ is a rational multiple of $\psi_n$. We prove this by calculating $\bigcap_{1\le i\le n-1}\text{ker }s_i^\ast$ in $\text{Pic}(\M{n})\otimes \Q$. Since we already know $\psi_n$ is contained in here (see, for example, \cite[Section 3]{AC09}), it suffices to show that $\bigcap_{1\le i\le n-1}\text{ker }s_i^\ast$ is $1$-dimensional.

    We now show that
    \begin{equation}\label{eqn:kap}
         s_i^\ast \mathbb{D}_{0,n}^2(a_1, \ldots, a_n) = \mathbb{D}_{0,n-1}^2(a_1, \ldots, a_{i-1}, a_i + a_n \bmod 2, a_{i+1}, \ldots, a_{n-1})
    \end{equation}
    on $\M{n-1}$. First, note that $s_i$ coincides with the clutching map $\xi \colon \M{n-1} \times \M{3} \to \M{n}$, where $\M{n-1}$ parametrizes points labeled by $[n-1] \setminus \{i\}$ and $\bullet$, and $\M{3}$ parametrizes points labeled by $i$, $n$, and $\circ$. The map $\xi$ glues the points $\bullet$ and $\circ$. Identifying $\M{3}$ with a point, \cref{thm:factor} gives:
    \begin{align*}
    s_i^\ast \mathbb{D}_{0,n}^2(a_1, \ldots, a_n) 
    &= \xi^\ast \mathbb{D}_{0,n}^2(a_1, \ldots, a_n) \\
    &= \mathbb{D}_{0,n-1}^2(a_1, \ldots, a_{i-1}, a_{i+1}, \ldots, a_{n-1})^{\otimes \rank \mathbb{V}_{0,3}^2(0, a_i, a_n)} \\
    &\quad \otimes \mathbb{D}_{0,n-1}^2(a_1, \ldots, a_{i-1}, 1, a_{i+1}, \ldots, a_{n-1})^{\otimes \rank \mathbb{V}_{0,3}^2(0, a_i, a_n+1)} \\
    &= \mathbb{D}_{0,n-1}^2(a_1, \ldots, a_{i-1}, a_i + a_n \bmod 2, a_{i+1}, \ldots, a_{n-1}),
    \end{align*}
    where the last equality follows from \cref{thm:lattice}.
    
    Fix a $\Q$-line bundle $\mathcal{L}\in \bigcap_{1\le i\le n-1}\text{ker }s_i^\ast$. Let 
    \[ \mathcal{L}=\sum_{(a_\bullet)\in A_n}c(a_\bullet)\mathbb{D}_{0,n}^2\left(a_\bullet\right). \]
    From $s_i^\ast \mathcal{L}=0$ and \cref{eqn:kap}, we have
    \begin{equation}\label{eqn:first}
        c(a_1,\cdots, a_{i-1}, a_i, a_{i+1}, \cdots, a_n)=-c(a_1,\cdots, a_{i-1}, a_i+1 \text{ mod }2, a_{i+1}, \cdots, a_n+1 \text{ mod }2).
    \end{equation}
    if $(a_1,\cdots, a_{i-1}, a_i+a_n \text{ mod }2, a_{i+1},\cdots, a_{n-1})\in A_{n-1}$. Applying this relationship twice, we get
    \begin{equation}\label{eqn:second}
        c(a_1,\cdots,a_i,\cdots, a_j\cdots, a_n)=c(a_1,\cdots,a_i+1 \text{ mod }2,\cdots, a_j+1 \text{ mod }2,\cdots, a_n)
    \end{equation}
    if $(a_1,\cdots, a_{i-1}, a_i+a_n \text{ mod }2, a_{i+1},\cdots, a_{n-1}), (a_1,\cdots, a_{j-1}, a_j+a_n \text{ mod }2, a_{j+1},\cdots, a_{n-1})\in A_{n-1}$. Applying these identities multiple times allows us to prove that the dimension of the subspace of line bundles which satisfies \cref{eqn:first} and \cref{eqn:second} is $1$-dimensional, and generated by
    \[ \sum_{(a_1,\cdots, a_n)\in A_n}(-1)^{a_n}\mathbb{D}_{0,n}^2(a_1,\cdots,a_n).\]
    
    \textbf{Case 1. }$\mathbf{n}$\textbf{ is even. }In this case, by applying \cref{eqn:first} and \cref{eqn:second} iteratively, starting from $c(1,1,\cdots,1)$, we establish that
    \[ c(a_1,\cdots, a_n)=(-1)^{a_n+1}c(1,1,\cdots, 1). \]
    for any $(a_1,\cdots, a_n)\in A_n$.  Indeed, this proof can be executed by performing induction on the number of $0$'s in the tuple $(a_1,\cdots, a_n)\in A_n$.

    \textbf{Case 2. }$\mathbf{n}$\textbf{ is odd. }Let $A_n^1$ be the subset of $A_n$ whose number of $0$'s is exactly $1$. Using \cref{eqn:first} on $c(a_1, \cdots, a_n)$ for $(a_1,\cdots, a_n)\in A_n^1$, it is straightforward to show that
    \[ c(a_1,\cdots, a_n)=-c(1,1,\cdots,1, 0). \]
    if $a_n\ne 0$. Now, by \cref{eqn:first} and \cref{eqn:second}, we can prove that
    \[ c(a_1,\cdots, a_n)=(-1)^{a_n+1}c(1,1,\cdots, 1). \]
    for any $(a_1,\cdots, a_n)\in A_n$, again, by induction on number of zeros. 
    
    This completes the proof for $\text{Pic}(\M{n})\otimes \Q$. We now establish the claim for $\text{Pic}(\M{n})$. If $\mathcal{L}$ is a line bundle on $\text{Pic}(\M{n})$ satisfying the condition, then there exists $c\in \Q$ such that $\mathcal{L}=c\psi_n$ in $\text{Pic}(\M{n})\otimes \Q$. Let $F$ be any F-curve not contained in $F_{\text{Kap}}$. By \cite[Section 3]{AC09}, $F\cdot \psi_n=1$. Then $c=c(F\cdot \psi_n)=F\cdot \mathcal{L}$, which is an integer, thus indicating that $\mathcal{L}$ is an integral multiple of $\psi_n$. This conclusion also holds in $\text{Pic}(\M{n})$ since $\text{Pic}(\M{n})$ is a free $\Z$-module.
\end{proof}

Note that \cref{thm:charKap} is a special case of a theorem by V. Alexeev in \cite[Proposition 4.6]{Fak12}. We refer to \cref{subsec:nec} for the relevant discussion. However, the proof of \cref{thm:charKap} is of a very different nature from Alexeev's theorem and deserves its own interest. In particular, as we will see, we can generalize this argument to other cases where Alexeev's argument cannot apply.

\cref{thm:basis} gives a basis of $\text{Pic}(\M{n})\otimes \Q$, though other bases are also known. For example, \cite[Lemma 2]{GF03} provides an integral basis of $\text{Pic}(\M{n})$. However, this basis is not stable under pullback along projections and clutching maps. Hence, if we interpret \cref{thm:charKap} in terms of this basis, the linear algebra problem becomes very intricate. It is difficult to prove \cref{thm:charKap} using this basis, even for small values like $n=7$. The basis provided by coinvariant divisors is useful because it is stable under pullbacks, making the corresponding linear algebra problem relatively easy.

\begin{rmk}\label{rmk:inv2}
    As a byproduct of the proof, we obtain
    \begin{equation}\label{eqn:psi}
        \psi_n = \frac{1}{2^{n-4}} \sum_{(a_1, \ldots, a_n) \in A_n} (-1)^{a_n + 1} \mathbb{D}_{0,n}^2(a_1, \ldots, a_n).
    \end{equation}
    By the proof of \cref{thm:charKap}, it suffices to compare a nontrivial intersection of both sides with a curve. Let $F$ be an $F$-curve not contained in $F_{\text{Kap}}$, i.e., $F = F(I, J, K, \{n\})$ as described above. Then $F \cdot \psi_n = 1$. Note that by \cref{thm:confcal}, $\mathbb{D}_{0,4}^2(a_1, \ldots, a_4)$ equals $1$ if all $a_i$ are $1$, and $0$ otherwise. By \cref{eqn:lattice}, we compute:
    \begin{align*}
    &\sum_{(a_1, \ldots, a_n) \in A_n} (-1)^{a_n + 1} \mathbb{D}_{0,n}^2(a_1, \ldots, a_n) \cdot F
    \\&= \left| \left\{ (a_1, \ldots, a_n) \in A_n \;\middle|\; \sum_{i \in I} a_i = \sum_{i \in J} a_i = \sum_{i \in K} a_i = a_n = 1 \right\} \right|= 2^{n-4}.
    \end{align*}
    Hence, the intersection numbers with $F$ agree on both sides.

    This clearly reveals the following well-known facts:
    \begin{enumerate}
        \item  The cone generated by $\left\{\mathbb{D}_{0,n}^2\left(a_\bullet\right)\ |\ a_\bullet \in A_n \right\}$ is strictly smaller than the nef cone.
        \item $\left\{\mathbb{D}_{0,n}^2\left(a_\bullet\right)\ |\ a_\bullet \in A_n \right\}$ is not a $\Z$-basis of $\text{Pic}(\M{n})$.
    \end{enumerate}
    Note that (1) is analyzed in more detail in \cite{GG12}. Despite (2), \cref{eqn:psi} proves that $\psi_n$ is contained in the submodule generated by $\mathbb{D}_{0,n}^2\left(a_\bullet\right)$'s if we invert $2$. This is not a coincidence; see \cref{thm:CDint}.
\end{rmk}

The proof of an analogue of \cref{thm:charKap} for $f_{S,T}$ also follows a similar two-step process. First, the use of the basis in \cref{thm:basis} converts the theorem on $\text{Pic}(\M{n}) \otimes \mathbb{Q}$ into a relatively easy linear algebra problem. Once this is resolved, the second step involves establishing the result over $\text{Pic}(\M{n})$. \cref{lem:linalg} solves the first step for the case of $f_{S,T}$.

\begin{lem}\label{lem:linalg}
    Let $k$ be a field of characteristic not equal to $2$, let $G = \mathbb{Z}_2^n$, and let $I := \text{Hom}_{\text{Ab}}(G, \mathbb{F}_2)$. For any $\phi \in I$, there exists an induced homomorphism of group rings $\phi_* : k[G] \to k[\mathbb{F}_2]$. Define a function $F : k[G] \to \prod_{\phi \in I} k[\mathbb{F}_2]$ as the product of $\phi_*$ for every $\phi \in I$. Thus, the preimage $F^{-1}\left(\prod_{\phi \in I} k\right)$ consists only of constant functions, i.e., $k$.
\end{lem}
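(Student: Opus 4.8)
The plan is to diagonalize the group algebra and reinterpret the defining condition through characters. First I would record the two $k$-algebra maps $\epsilon_0,\epsilon_1:k[\mathbb{F}_2]\to k$ coming from the two characters of $\mathbb{F}_2$: the augmentation $\epsilon_0$, sending both $e_0$ and $e_1$ to $1$, and the sign map $\epsilon_1$, with $\epsilon_1(e_0)=1$ and $\epsilon_1(e_1)=-1$. Writing $t=e_1$, so that $k[\mathbb{F}_2]=k[t]/(t^2-1)$, the hypothesis $\mathrm{char}\,k\neq 2$ makes $t-1$ and $t+1$ coprime, hence $(\epsilon_0,\epsilon_1):k[\mathbb{F}_2]\xrightarrow{\sim}k\times k$ is an isomorphism. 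Under it the scalar line $k=k\cdot e_0\subseteq k[\mathbb{F}_2]$ maps onto the diagonal, so $k\cdot e_0=\{y:\epsilon_0(y)=\epsilon_1(y)\}$. This is where characteristic $\neq 2$ is first used in an essential way.

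Next I would compose with the maps $\phi_\ast$. For each $\phi\in I$, both $\epsilon_0\circ\phi_\ast$ and $\epsilon_1\circ\phi_\ast$ are $k$-algebra homomorphisms $k[G]\to k$, hence are induced by group characters $G\to k^\times$. Evaluating on group elements gives $\epsilon_0\circ\phi_\ast=\mathrm{aug}$, the augmentation, independent of $\phi$, and $\epsilon_1\circ\phi_\ast=\chi_\phi$, where $\chi_\phi(g)=(-1)^{\phi(g)}$. Combining this with the first paragraph, the condition $F(x)\in\prod_\phi k$, which means $\phi_\ast(x)\in k\cdot e_0$ for every $\phi$, becomes the linear system $\chi_\phi(x)=\mathrm{aug}(x)$ for all $\phi\in I$.

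It then remains to solve this system, and the decisive point is that as $\phi$ ranges over $I=\mathrm{Hom}(G,\mathbb{F}_2)$ the characters $\chi_\phi$ run over all $2^n$ characters of $G=\mathbb{Z}_2^n$ (every such character is $\{\pm1\}$-valued since $g^2=e$). Equivalently, iterating the splitting $k[\mathbb{F}_2]\cong k\times k$ across the $n$ tensor factors shows that the discrete Fourier transform $\Phi:k[G]\to\prod_{\phi\in I}k$, $x\mapsto(\chi_\phi(x))_\phi$, is an algebra isomorphism, again because $\mathrm{char}\,k\neq 2$ makes $|G|=2^n$ invertible. The system $\chi_\phi(x)=\mathrm{aug}(x)$ says exactly that the tuple $\Phi(x)$ is constant; since $\Phi(e_G)=(1,\dots,1)$ and $\Phi$ is injective, the constant tuples are precisely $\Phi(k\cdot e_G)$, whence $x$ lies in the scalar line $k\cdot e_G$, the copy of $k$ in $k[G]$. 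The reverse inclusion is immediate from $\phi_\ast(e_G)=e_0$, giving $F^{-1}(\prod_\phi k)=k$.

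I expect the main obstacle to be the bookkeeping of the translation step, namely correctly identifying the scalar subspace $k\subseteq k[\mathbb{F}_2]$ with the diagonal and verifying $\epsilon_1\circ\phi_\ast=\chi_\phi$, rather than the concluding linear algebra, which is standard character theory. An alternative is to attack the system $\sum_{g:\phi(g)=1}c_g=0$ head-on via the orthogonality relation $\sum_{\phi\in I}(-1)^{\phi(g)}$, which equals $2^n$ when $g=e_G$ and $0$ otherwise, together with Fourier inversion; but this is merely the explicit form of the same isomorphism $\Phi$ and relies on $2$ being invertible in exactly the same way.
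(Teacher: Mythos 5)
Your proof is correct. It rests on the same underlying duality as the paper's argument, but renders it in elementary character-theoretic terms: the paper reduces to an algebraically closed field, invokes Cartier duality to view $k[G]$ as the coordinate ring of $\mu_2^n$ and each $\phi_\ast$ as restriction of functions along a group-scheme homomorphism $\mu_2\to\mu_2^n$, and concludes that a function constant on the image of every such homomorphism is constant; you instead split $k[\mathbb{F}_2]\cong k\times k$ by its two characters, translate the condition $\phi_\ast(x)\in k$ into $\chi_\phi(x)=\mathrm{aug}(x)$, and finish with the injectivity of the discrete Fourier transform $k[G]\to k^{2^n}$. Your route buys self-containedness: it needs no base change to an algebraic closure (which the paper dispatches with the single word ``flatness''), no classification of homomorphisms $\mu_2\to\mu_2^n$, and it isolates the two precise places where $\mathrm{char}\,k\neq 2$ enters --- the splitting of $k[\mathbb{F}_2]$ and the invertibility of the Fourier transform. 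Since every character of $\mathbb{Z}_2^n$ is $\{\pm 1\}$-valued, no roots of unity beyond $\pm 1$ are needed, so your argument applies verbatim over any field of odd or zero characteristic, in particular over $\mathbb{Z}/p\mathbb{Z}$ as required when the lemma is reused later in the paper. The paper's phrasing is shorter and more conceptual; yours is the version a reader can check line by line, and, as you observe yourself, the two are the same isomorphism viewed from opposite sides.
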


\begin{proof}
    By flatness, we may assume that $k$ is algebraically closed. Since both $k[G]$ and $k[\mathbb{F}_2]$ are Hopf algebras over $k$, they are (non-canonically) isomorphic to coordinate ring of finite group schemes $\mu_2^n$ and $\mu_2$ over $k$, respectively. Additionally, since $\phi_\ast$ is a homomorphism of Hopf algebras, and not just a ring homomorphism, it induces a homomorphism of group schemes $\phi^\ast:\mu_2\to \mu_2^n$. Note that any homomorphism of group scheme from $\mu_2$ to $\mu_2^n$ can be represented as $\phi^\ast$ for some $\phi\in I$. Therefore, the map $F$ corresponds to the morphism of group schemes $\amalg_I \mu_2\to \mu_{2^n}$. The preimage $F^{-1}\left(\prod_{\phi\in I}k\right)$ corresponds to the set of functions on $\mu_2^n$ whose restriction via $\phi^\ast$ is a constant function on $\mu_2$. Since $\phi^\ast$ covers every morphism from $\mu_2$ to $\mu_2^n$, the only functions that satisfy this condition are the constant functions. Hence $F^{-1}\left(\prod_{\phi\in I}k\right)=k$.
\end{proof}

Note that \cref{lem:linalg} is false if $\text{char } k = 2$. This highlights the significance of $\mu_2$ in the proof. Indeed, the reason we need to invert $2$ to express $\psi$-classes as a linear combination of $\mathfrak{sl}_2$ level $1$ coinvariant divisors, as seen in \cref{rmk:inv2}, is that \cref{lem:linalg} does not hold for characteristic $2$. See \cref{thm:CDint} for details.

Now we are ready to prove the main theorem of this section for $\text{Pic}\left(\M{n}\right)_\Q$. The following result is a direct application of \cref{lem:linalg}, which serves as a key technical tool in the proof.

\begin{lem}\label{lem:Qknu}
     For distinct elements $s,t$ in $\left\{1,2,\cdots,n\right\}$, define
    \[ F_{s,t}:= \left\{F(I,J,K,L)\ |\ I=\left\{s\right\}, J=\left\{t\right\}  \right\}. \]
    If a $\Q$-line bundle 
    \[\mathcal{L}=\sum_{(a_\bullet)\in A_n}c(a_\bullet)\mathbb{D}_{0,n}^2\left(a_\bullet\right)\]
    of $\M{n}$ intersects with every element of $F_{s,t}$ trivially, then $c(a_\bullet)=0$ if $a_s=a_t=1$.
\end{lem}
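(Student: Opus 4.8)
The plan is to turn the hypothesis into a single statement in the group ring $\Q[\Z_2^{R}]$, where $R:=\{1,\dots,n\}\setminus\{s,t\}$, and then feed it into \cref{lem:linalg}. The starting point is the intersection formula \eqref{eqn:lattice}: for an F-curve $F(\{s\},\{t\},K,L)\in F_{s,t}$,
\[
\mathbb{D}_{0,n}^2(a_\bullet)\cdot F(\{s\},\{t\},K,L)=\deg \mathbb{D}_{0,4}^2\!\left(a_s,\ a_t,\ \textstyle\sum_{i\in K}a_i,\ \sum_{i\in L}a_i\right),
\]
with all entries read modulo $2$. By \cref{thm:confcal}, as recorded in \cref{rmk:inv2}, $\deg\mathbb{D}_{0,4}^2(b_1,b_2,b_3,b_4)$ is $1$ exactly when all four $b_i$ equal $1$, and $0$ otherwise. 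Hence this intersection can be nonzero only when $a_s=a_t=1$, which already explains why only coefficients with $a_s=a_t=1$ are constrained; and the condition $\mathcal{L}\cdot F(\{s\},\{t\},K,L)=0$ becomes the vanishing of the partial sum of the $c(a_\bullet)$ over those $a_\bullet\in A_n$ with $a_s=a_t=1$, $\sum_{i\in K}a_i\equiv 1$ and $\sum_{i\in L}a_i\equiv 1\pmod 2$.

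Next I would record, for each $a_\bullet$ with $a_s=a_t=1$, the remaining coordinates as $b=(a_i)_{i\in R}\in\Z_2^{R}$, set $d(b):=c(a_\bullet)$, and extend $d$ by zero to those $b$ whose associated $a_\bullet$ lies outside $A_n$. The relevant observation is that $d$ is supported in even weight: for $a_\bullet\in A_n$ with $a_s=a_t=1$ the total weight $2+\operatorname{wt}(b)$ must be even and at least $4$, so $\operatorname{wt}(b)$ is even (and $\ge 2$). Packaging all these coefficients into $\xi:=\sum_{b\in\Z_2^{R}}d(b)\,b\in\Q[\Z_2^{R}]$, I then use $L=R\setminus K$ to collapse the two-sided condition. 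Since $\sum_{i\in K}b_i+\sum_{i\in L}b_i=\operatorname{wt}(b)$ is even on $\operatorname{supp}(d)$, the pair of conditions $\sum_{i\in K}b_i\equiv 1$, $\sum_{i\in L}b_i\equiv 1$ is equivalent to $\sum_{i\in K}b_i\equiv 1$ alone. Thus the hypothesis says exactly that for every proper nonempty $K\subsetneq R$,
\[
\sum_{b:\ \sum_{i\in K}b_i\equiv 1}d(b)=0.
\]

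With $G=\Z_2^{R}$, a subset $K\subseteq R$ is the same datum as a homomorphism $\phi_K\in\operatorname{Hom}(G,\mathbb{F}_2)$, and the displayed vanishing is precisely the assertion that the component $\phi_{K,*}(\xi)$ lands in the copy of $k$ inside $k[\mathbb{F}_2]$ (its $e_1$-coefficient is zero). The two leftover homomorphisms come for free: for $K=\emptyset$ the component is constant trivially, and for $K=R$ it is constant because $\xi$ is supported in even weight. Hence $\xi\in F^{-1}\!\left(\prod_{\phi}k\right)$, so \cref{lem:linalg} forces $\xi$ to be a scalar multiple of the unit $0\in\Z_2^{R}$; that is, $d(b)=0$ for all $b\ne 0$, with $d(0)$ equal to that scalar. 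But $b=0$ corresponds to an $a_\bullet$ carrying only two $1$'s, which is not in $A_n$, so $d(0)=0$ by construction and the scalar vanishes. Therefore $d\equiv 0$, i.e. $c(a_\bullet)=0$ whenever $a_s=a_t=1$.

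The one genuinely delicate step, and the part I would write out most carefully, is this matching with \cref{lem:linalg}: reconciling the combinatorics defining $A_n$ (even weight, at least four $1$'s) with the partition-indexed conditions, and verifying that after restricting to $a_s=a_t=1$ these yield the single-homomorphism vanishing conditions of the lemma for \emph{all} $\phi$, not merely a proper subset. The facts that $L=R\setminus K$ and that $\xi$ is supported in even weight are exactly what collapse the two-sided condition and what supply the otherwise missing homomorphisms $\phi_\emptyset$ and $\phi_R$; without the even-weight support, $\xi$ would fail to satisfy the full hypothesis of \cref{lem:linalg}, and the conclusion would break down.
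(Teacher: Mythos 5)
Your proof is correct and follows essentially the same route as the paper: reduce to the coefficients with $a_s=a_t=1$, package them as an element of a group ring over $\Z_2^{(\cdot)}$, identify the curves in $F_{s,t}$ with homomorphisms to $\mathbb{F}_2$, and invoke \cref{lem:linalg} together with the fact that the identity element is not in the support (since tuples with only two $1$'s lie outside $A_n$). The only difference is bookkeeping: the paper drops one coordinate $r$ to get a bijection with $G\setminus\{0\}$ for $G=\Z_2^{n-3}$, whereas you keep all $n-2$ coordinates and use the even-weight support to supply the conditions for $\phi_\emptyset$ and $\phi_R$ — these are equivalent setups.
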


\begin{proof}
    First, we will prove that if $a_s = 0$ or $a_t = 0$, then the intersection of $\mathbb{D}_{0,n}^2(a_\bullet)$ with a curve $F(I, J, \{s\}, \{t\})$ in $F_{s,t}$ is trivial: By \cref{eqn:lattice}, we have
    \[
    \mathbb{D}_{0,n}^2(a_\bullet) \cdot F = \deg \mathbb{D}_{0,4}^2\left( \sum_{i \in I} a_i, \sum_{i \in J} a_i, a_s, a_t \right),
    \]
    which is zero by \cref{thm:confcal}.
    
    Hence, we may assume that
    \[ \mathcal{L}=\sum_{(a_\bullet)\in A_n^{s,t}}c(a_\bullet)\mathbb{D}_{0,n}^2\left(a_\bullet\right) \]
    where $A_n^{s,t}=\left\{(a_\bullet)\in A_n\ |\ a_s=a_t=1 \right\}$. We now translate the situation to one where \cref{lem:linalg} can be applied.
    
    Choose $r\in \left\{1,2,\cdots, n\right\}\setminus  \left\{s,t\right\}$, and let $I:= \left\{1,2,\cdots, n\right\}\setminus  \left\{r,s,t\right\}$. Then there exists a natural one to one correspondence between non-identity element in the group $G=\mathbb{F}_2^I$ and $A_n^{s,t}$: Let $h_1:A_n^{s,t}\to G\setminus \left\{0\right\}$ be the map that sends $\left(a_\bullet\right)$ to $\left(a_\bullet\right)$ mod $2$ and disregards the number at $r$. Since the sum of the entries of an element of $A_n^{s,t}$ is even, $h_1$ is one-to-one. Consequently, a line bundle $\mathcal{L}$ of the preceding form corresponds to an element of the group ring $\mathbb{Q}[G]$, with vanishing constant coefficient. We denote this second association by $(h_1)_\ast$. Moreover, a nonzero group homomorphism $\phi:G\to \mathbb{F}_2$ corresponds to $F_{s,t}$: For any $F(\left\{s\right\},\left\{t\right\},K,L)\in F_{s,t}$, without loss of generality, assume that $r\in K$.  Define $h_2:F_{s,t}\to \text{Hom}(G, \mathbb{F}_2)\setminus \left\{0\right\}$ as $h_2(F(\left\{s\right\},\left\{t\right\},K,L)$ being the map sending $K\setminus \left\{r\right\}$ to $0$ and $L$ to $1$. Since $I=(K\cup L)\setminus \left\{r\right\}$ and $L$ is nonempty, this defines a well-defined nonzero morphism in $\text{Hom}(G, \mathbb{F}_2)$. By mapping $\phi\in \text{Hom}(G, \mathbb{F}_2)\setminus \left\{0\right\}$ to $F(\left\{s\right\},  \left\{t\right\}, (\phi^{-1}(0)\cap I)\cup \left\{r\right\}, (\phi^{-1}(1)\cap I))$, we can demonstrate that $h_2$ is a bijection.

    This defines a commutative diagram
    \begin{equation}\label{eqn:lemma}
        \begin{tikzcd}
        \mathbb{Q}^{A_n^{s,t}}\times F_{s,t}\arrow[r, "c_1"]\arrow[d, "(h_1)_\ast\times h_2"]& \mathbb{Q}\arrow[d, "i"] \\
        \mathbb{Q}[G]\times \text{Hom}(G, \mathbb{F}_2)\arrow[r, "c_2"]&\mathbb{Q}[\mathbb{F}_2]
    \end{tikzcd} 
    \end{equation}
    where $c_1$ is the intersection pairing, $c_2$ is the evaluation map defined by
    \[ \left(\sum_{g\in G}a_g \cdot g, \phi\right)\mapsto \sum_{g\in G} a_g\cdot \phi(g)  \]
    and $i$ be the map defined by $i(q)=q\cdot 1$. By the definition of $h_1$ and $h_2$, and using \cref{thm:lattice} and \cref{thm:confcal}, it is straightforward to prove that this diagram commutes. The statement of the lemma is equivalent to the following: if $c_1(L, -)\in \text{Hom}(F_{s,t},\Q)$ is zero for some $L\in\mathbb{Q}^{A_n^{s,t}}$, then $L=0$. This follows from \cref{eqn:lemma} and \cref{lem:linalg}, since the image of $h_1$ does not contain the identity.     
\end{proof}

As a first step, we consider the projection maps.

\begin{prop}\label{thm:charprojQ}
    For any $1\le i\le n$, define
    \[ F_{i}:= \left\{F(I,J,K,L)\ |\ I=\left\{i\right\}  \right\}, \]
    There exists an exact sequence
    \[ 0\to \text{Pic}(\M{n-1})_\Q\xrightarrow{\pi_{i}^\ast} \text{Pic}(\M{n})_\Q\xrightarrow{c} \Q^{F_{\text{i}}}.\]
    where $c$ is the intersection pairing. In particular,  a $\Q$-line bundle $\mathcal{L}$ on $\M{n}$ is in the image of $\pi_{i}^\ast$ if and only if it intersects every curve in $F_{i}$ trivially.
\end{prop}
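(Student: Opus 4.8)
The plan is to establish the two nontrivial facts hidden in the exact sequence: that $\pi_i^\ast$ is injective and that $\operatorname{im}(\pi_i^\ast) = \ker(c)$. Injectivity is standard — $\pi_i$ is flat and proper with geometrically connected fibres (it is a universal curve up to relabelling) and admits sections, so $\pi_i^\ast$ is split injective on the $\Q$-Picard groups. The inclusion $\operatorname{im}(\pi_i^\ast) \subseteq \ker(c)$ is geometric: each F-curve $F(I,J,K,L)$ with $I = \{i\}$ is contracted by $\pi_i$, since forgetting the point $i$ destabilises the central $\M{4}$-component and collapses its varying cross-ratio to a single point of $\M{n-1}$. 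Hence for $\mathcal{M} \in \text{Pic}(\M{n-1})_\Q$ and $F \in F_i$ the projection formula gives $\pi_i^\ast \mathcal{M} \cdot F = \mathcal{M} \cdot (\pi_i)_\ast F = 0$, so every pullback lies in $\ker(c)$.

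The substantive inclusion is $\ker(c) \subseteq \operatorname{im}(\pi_i^\ast)$, and here I would pass to the coinvariant-divisor basis of \cref{thm:basis}, writing $\mathcal{L} = \sum_{a_\bullet \in A_n} c(a_\bullet)\,\mathbb{D}_{0,n}^2(a_\bullet)$ for an arbitrary $\mathcal{L} \in \ker(c)$. For each $t \neq i$ the sub-collection $F_{i,t} = \{F(\{i\},\{t\},K,L)\}$ lies inside $F_i$, so $\mathcal{L}$ meets every curve in $F_{i,t}$ trivially; \cref{lem:Qknu} (with $s = i$) then forces $c(a_\bullet) = 0$ whenever $a_i = a_t = 1$. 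Letting $t$ range over all indices different from $i$ and invoking the defining property of $A_n$ that at least four coordinates equal $1$ — so that $a_i = 1$ automatically produces some $t \neq i$ with $a_t = 1$ — I conclude that $c(a_\bullet) = 0$ for every $a_\bullet$ with $a_i = 1$. Thus $\mathcal{L}$ is supported entirely on basis elements whose $i$-th weight is $0$.

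It then remains to recognise this surviving sum as a pullback. Propagation of vacua (\cref{thm:propvac}), applied to $\pi_i$ after relabelling, identifies $\pi_i^\ast \mathbb{D}_{0,n-1}^2(a_1,\dots,\widehat{a_i},\dots,a_n)$ with $\mathbb{D}_{0,n}^2(a_1,\dots,a_{i-1},0,a_{i+1},\dots,a_n)$; since deleting a zero entry preserves the number of $1$'s, the shortened tuple again lies in $A_{n-1}$ and indexes a basis element of $\text{Pic}(\M{n-1})_\Q$. Therefore $\mathcal{L} = \pi_i^\ast\big(\sum c(a_\bullet)\,\mathbb{D}_{0,n-1}^2(\dots)\big) \in \operatorname{im}(\pi_i^\ast)$, which yields the middle exactness; the concluding ``if and only if'' is precisely the equality $\operatorname{im}(\pi_i^\ast) = \ker(c)$. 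I expect essentially all of the difficulty to be concentrated in \cref{lem:Qknu} (and hence in the group-scheme argument of \cref{lem:linalg}); once that vanishing is in hand, the reduction via the four-ones condition on $A_n$ and the single application of propagation of vacua are routine bookkeeping.
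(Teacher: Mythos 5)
Your proposal is correct and follows essentially the same route as the paper: injectivity of $\pi_i^\ast$ from the contraction/section structure, expansion in the $\mathbb{D}_{0,n}^2(a_\bullet)$ basis, application of \cref{lem:Qknu} to the sub-collections $F_{i,t}\subseteq F_i$ together with the ``at least four ones'' property of $A_n$ to kill every coefficient with $a_i=1$, and propagation of vacua to identify the surviving sum as a pullback (the paper phrases the coefficient-vanishing step as a proof by contradiction, but the content is identical). One cosmetic point: forgetting the point $i$ does not destabilise the spine of $F(\{i\},J,K,L)$ --- it leaves a component with exactly three special points, which is stable but has no moduli --- though your conclusion that the cross-ratio collapses and the curve is contracted is of course correct.
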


\begin{proof}
     $\pi_i^\ast$ is injective since $\pi_i$ is a contraction. Let  
    \[ \mathcal{L}=\sum_{(a_\bullet)\in A_n}c(a_\bullet)\mathbb{D}_{0,n}^2\left(a_\bullet\right). \]
    be a $\Q$-line bundle on $\M{n}$ whose intersection number with any curve in $F_{i}$ is zero. By the Propagation of Vacua, \cref{thm:propvac}, it suffices show that $c(a_\bullet)=0$ if $a_i=1$. Assume, for contradiction, that there exists a counterexample $(a_\bullet)\in A_n$. Given that $(a_\bullet)\in A_n$, there must exist some $j\ne i$ such that $a_j=1$. Note that $F_{i,j}$, defined in \cref{lem:Qknu}, is contained in $F_i$. Therefore, $\mathcal{L}$ intersects with curves in $F_{i,j}$ trivially. This contradicts \cref{lem:Qknu}.
\end{proof}

\cref{thm:chargenQ} is the main theorem for $\text{Pic}\left(\M{n}\right)_\Q$.

\begin{prop}\label{thm:chargenQ}
    Let $S,T\subseteq \left\{1,2,\cdots,n \right\}$ be subsets such that $|S|, |T|\ge 3$ and let $f_{S,T}$ and $F_{S,T}$ as described in \cref{subsec:cont}. 
    
    \begin{enumerate}
        \item The following sequence is exact:
        \[ 0\to \text{Pic}(\M{S\cap T})_\Q \xrightarrow{(\pi^{S\cap T,\ast}, -\pi^{S\cap T,\ast})}\text{Pic}(\M{S})_\Q\times \text{Pic}(\M{T})_\Q\xrightarrow{\pi^{S,\ast}+\pi^{T,\ast}} \text{Pic}(\M{n})_\Q\xrightarrow{c} \Q^{F_{S,T}}\]
        where $c$ is the intersection pairing. In particular, a $\Q$-line bundle $\mathcal{L}$ on $\M{n}$ is in the image of $\pi^{S,\ast}+\pi^{T,\ast}$ if and only if it intersects every curve in $F_{S,T}$ trivially.
        \item The image of  $f_{S,T}^\ast: \text{Pic}\left(\M{S}\times_{\M{S\cap T}}\M{T} \right)_\Q\to \text{Pic}\left(\M{n} \right)_\Q$ is $\text{Ker }c$.
        \item The  natural map $\text{Pic}(\M{S})_\Q \times \text{Pic}(\M{T})_\Q \to \text{Pic}\left(\M{S}\times_{\M{S\cap T}}\M{T}\right)_\Q$ is a surjection.
    \end{enumerate}
\end{prop}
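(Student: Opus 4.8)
The plan is to reduce the whole proposition to the exactness at $\text{Pic}(\M{n})_\Q$ in part (1) — specifically to the equality $\text{Ker}(c)=\text{Im}(\pi^{S,\ast}+\pi^{T,\ast})$ — since parts (2) and (3) will then follow formally. Throughout I would work in the basis of \cref{thm:basis}, writing $\mathcal{L}=\sum_{a_\bullet\in A_n}c(a_\bullet)\mathbb{D}_{0,n}^2(a_\bullet)$, and exploit Propagation of Vacua (\cref{thm:propvac}) in the form $\pi^{U,\ast}\mathbb{D}_{0,U}^2(b_\bullet)=\mathbb{D}_{0,n}^2(\tilde b_\bullet)$, where $\tilde b_\bullet$ extends $b_\bullet$ by zeros on $U^c$. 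This yields a clean \emph{support lemma}, which I would record first and reuse: for a projection $\pi^U\colon\M{n}\to\M{U}$, a $\Q$-line bundle lies in $\text{Im}(\pi^{U,\ast})$ if and only if every basis element occurring in it has all its $1$'s inside $U$.

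For exactness at $\text{Pic}(\M{n})_\Q$, the inclusion $\text{Im}(\pi^{S,\ast}+\pi^{T,\ast})\subseteq\text{Ker}(c)$ is immediate from the projection formula, since each curve in $F_{S,T}$ is contracted by both $\pi^S$ and $\pi^T$. The substantive direction is $\text{Ker}(c)\subseteq\text{Im}$. By the support lemma, $\text{Im}(\pi^{S,\ast}+\pi^{T,\ast})$ is exactly the span of the basis elements supported in $S$ or in $T$; a line bundle falls outside this span precisely when some $c(a_\bullet)\neq 0$ for an $a_\bullet$ carrying a $1$ in $S^c$ and a $1$ in $T^c$. So it suffices to kill these ``mixed'' coefficients for $\mathcal{L}\in\text{Ker}(c)$. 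Given such an $a_\bullet$, I would produce a pair $s\neq t$ with $a_s=a_t=1$ and $F_{s,t}\subseteq F_{S,T}$, and then invoke \cref{lem:Qknu}. The two cases match the two clauses defining $F_{S,T}$: if some $1$ sits at a position $s\in S^c\cap T^c$, pick any further $t$ with $a_t=1$ (possible since $a_\bullet$ has at least four $1$'s) and use the clause $I\subseteq S^c\cap T^c$; otherwise the $1$'s in $S^c$ and $T^c$ occur at distinct positions $s\in S^c\setminus T^c$, $t\in T^c\setminus S^c$, and we use the clause $I\subseteq S^c$, $J\subseteq T^c$. In either case $F_{s,t}\subseteq F_{S,T}$, so $\mathcal{L}$ restricts trivially to $F_{s,t}$, and \cref{lem:Qknu} forces $c(a_\bullet)=0$.

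The remaining exactness in part (1) is formal. Injectivity of $(\pi^{S\cap T,\ast},-\pi^{S\cap T,\ast})$ holds because $\pi^{S\cap T}$ is a contraction and hence has injective pullback; the composite with $\pi^{S,\ast}+\pi^{T,\ast}$ vanishes because $\pi^S\circ\pi^{S\cap T}$ and $\pi^T\circ\pi^{S\cap T}$ are the same projection $\M{n}\to\M{S\cap T}$. Conversely, if $\pi^{S,\ast}L_S=-\pi^{T,\ast}L_T=:\mathcal{M}$, then $\mathcal{M}$ is supported in both $S$ and $T$, hence in $S\cap T$; the support lemma with $U=S\cap T$ (the degenerate case $|S\cap T|\le 2$ simply forcing $\mathcal{M}=0$) gives $\mathcal{M}=\pi^{S\cap T,\ast}N$, and injectivity of $\pi^{S,\ast}$, $\pi^{T,\ast}$ yields $(L_S,L_T)=(\pi^{S\cap T,\ast}N,-\pi^{S\cap T,\ast}N)$. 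Part (2) then follows since $\pi^{S,\ast}$ and $\pi^{T,\ast}$ factor through $f_{S,T}^\ast$ via the two fiber-product projections $p_S,p_T$, giving $\text{Ker}(c)=\text{Im}(\pi^{S,\ast}+\pi^{T,\ast})\subseteq\text{Im}(f_{S,T}^\ast)$, while the projection formula gives the reverse inclusion. For part (3), $f_{S,T}^\ast$ is injective because $f_{S,T}$ is a contraction onto the normal variety $\M{S}\times_{\M{S\cap T}}\M{T}$ (\cref{thm:gencont}); then for any $N$ on the fiber product, $f_{S,T}^\ast N\in\text{Ker}(c)$ equals $\pi^{S,\ast}L_S+\pi^{T,\ast}L_T=f_{S,T}^\ast(p_S^\ast L_S+p_T^\ast L_T)$, and injectivity gives $N=p_S^\ast L_S+p_T^\ast L_T$.

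The main obstacle is the coefficient-vanishing step in part (1): converting the geometric hypothesis $\mathcal{L}\cdot F=0$ for all $F\in F_{S,T}$ into the vanishing of the mixed basis coefficients. Everything hinges on selecting, for each offending $a_\bullet$, a subfamily $F_{s,t}\subseteq F_{S,T}$ to which \cref{lem:Qknu} applies, and the two clauses in the definition of $F_{S,T}$ are precisely what make this possible in both the $S^c\cap T^c$ case and the disjoint-positions case. This is the only point where the precise combinatorics of $F_{S,T}$ and the group-ring computation behind \cref{lem:Qknu} are genuinely needed; once it is settled, the rest reduces to the support lemma and standard pullback-injectivity.
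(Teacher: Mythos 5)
Your proposal is correct and follows essentially the same route as the paper: expand in the $\mathbb{D}_{0,n}^2(a_\bullet)$ basis, use propagation of vacua to identify $\mathrm{Im}(\pi^{S,\ast}+\pi^{T,\ast})$ with the span of basis elements supported in $S$ or in $T$, and kill the mixed coefficients by applying \cref{lem:Qknu} to a subfamily $F_{s,t}\subseteq F_{S,T}$, with (2) and (3) then formal. The only difference is organizational: the paper first reduces to $S\cup T=[n]$ via \cref{thm:charprojQ} so that only your second case occurs, whereas you absorb the $S^c\cap T^c$ situation directly into the case analysis; both are fine.
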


\begin{proof}
    (1) Since $\pi^{S\cap T}$ is a composition of contractions $\pi_i$, it is a contraction. Therefore, $\pi^{S \cap T, \ast}$—and therefore $(\pi^{S \cap T, \ast}, -\pi^{S \cap T, \ast})$—is injective. This shows the exactness at $\text{Pic}(\M{S\cap T})_\Q$. By the same reasoning, both $\pi^{S,\ast}$ and $\pi^{T,\ast}$ are injective. Moreover, by \cref{thm:propvac} and \cref{thm:basis}, their images are $\Q$-vector spaces with bases consisting of coinvariant divisors $\mathbb{D}_{0,n}^2(a_\bullet)$ such that $a_i = 0$ for all $i \in S^c$ (respectively, $i \in T^c$). Thus, applying \cref{thm:propvac} and \cref{thm:basis} again, we conclude that the intersection of their images is precisely the image of $\pi^{S \cap T, \ast}$. Therefore, the sequence is exact at $\text{Pic}(\M{S})_\Q \times \text{Pic}(\M{T})_\Q$.

    We now prove exactness at $\text{Pic}(\M{n})_\Q$. Since every element of $F_{S,T}$ is contracted by both $\pi^S$ and $\pi^T$, the image of $\pi^{S,\ast} + \pi^{T,\ast}$ is contained in the kernel of $c$. By \cref{thm:charprojQ}, we may assume that $\{1,2,\dots,n\} = S \cup T$. Let $\mathcal{L}$ be a $\Q$-line bundle on $\M{n}$ whose intersection number with any curve in $F_{S,T}$ is zero. By \cref{thm:basis},
    \[
    \mathcal{L} = \sum_{(a_\bullet) \in A_n} c(a_\bullet) \mathbb{D}_{0,n}^2(a_\bullet).
    \]
    We claim that $c(a_\bullet) \ne 0$ only if $a_i = 0$ for all $i \in S^c$ or for all $i \in T^c$. Then, \cref{thm:propvac} shows that $\mathcal{L}$ lies in the image of $\pi^{S,\ast}+\pi^{T,\ast}$. Suppose, for contradiction, that there exists $(a_\bullet) \in A_n$ such that $c(a_\bullet) \ne 0$, but $a_i \ne 0$ for some $i \in S^c$ and some $i \in T^c$. Then there exist $s \in S^c$ and $t \in T^c$ such that $a_s = a_t = 1$. Since $S \cup T = \{1,2,\dots,n\}$, we have $s \ne t$. As $F_{s,t} \subseteq F_{S,T}$, the bundle $\mathcal{L}$ intersects trivially with all curves in $F_{s,t}$. This contradicts \cref{lem:Qknu}.

    (2) Since $\pi^S\times \pi^T$ factors through $f_{S,T}$, the image of $f_{S,T}^\ast$ contains the image of $\pi^{S,\ast}+\pi^{T,\ast}$. Let $\mathcal{L}$ be a $\Q$-line bundle on $\M{S}\times_{\M{S\cap T}}\M{T}$. Then, since $f_{S,T}$ contracts curves in $F_{S,T}$, $f_{S,T}^\ast\mathcal{L}$ intersects with curves in $F_{S,T}$ trivially. Hence, $f_{S,T}^\ast\mathcal{L}$ is contained in the image of $\pi^{S,\ast}+\pi^{T,\ast}$ by the first assertion. This demonstrates (2). 
    
    (3) (2) implies that for any $\mathcal{L}\in \text{Pic}\left(\M{S}\times_{\M{S\cap T}}\M{T} \right)_\Q$, there exist $\mathcal{L}_1\in \text{Pic}(\M{S})_\Q, \mathcal{L}_2\in \text{Pic}(\M{T})_\Q$ such that $f_{S,T}^\ast\mathcal{L}=\pi^{S,\ast}\mathcal{L}_1+\pi^{T,\ast}\mathcal{L}_2=f_{S,T}^\ast i^\ast(\mathcal{L}_1\boxtimes \mathcal{L}_2)$ where $i$ is the inclusion 
    \[ i:\M{S}\times_{\M{S\cap T}}\M{T}\to \M{S}\times\M{T}. \]
    Since $f_{S,T}$ is a contraction, $f_{S,T}^\ast$ is an injection. Therefore, $\mathcal{L}=\mathcal{L}_1\boxtimes \mathcal{L}_2$. This proves (3).
\end{proof}

For the reader’s convenience, we state \cref{thm:chargenQ} in the special case of $f_{\text{Knu}}$, where $S = [n-1]$ and $T = [n] \setminus \{n-1\}$. This case is particularly important, as it plays a key role in the proofs of \cref{thm:charKnu} and the results in \cref{sec:cons}. See also \cref{rmk:Knu}.

\begin{cor}\label{thm:charKnuQ}
    \begin{enumerate}
        \item The following sequence is exact
        \[ 0\to \text{Pic}(\M{n-2})_\Q \xrightarrow{(\pi_{n}^\ast, -\pi_{n-1}^\ast)}\text{Pic}(\M{n-1})_\Q\times \text{Pic}(\M{n-1})_\Q\xrightarrow{\pi_{n-1}^\ast+\pi_{n}^\ast} \text{Pic}(\M{n})_\Q\xrightarrow{c} \Q^{F_{\text{Knu}}}\]
        where $c$ is the intersection pairing. In particular, a $\Q$-line bundle $\mathcal{L}$ on $\M{n}$ is in the image of $\pi_{n}^\ast+\pi_{n-1}^\ast$ if and only if it intersects with curves in $F_{\text{Knu}}$ trivially.
        \item The image of  $f_{\text{Knu}}^\ast: \text{Pic}\left(\M{n-1}\times_{\M{n-2}}\M{n-1} \right)_\Q\to \text{Pic}\left(\M{n} \right)_\Q$ is $\text{Ker }c$.
        \item The  natural map $\text{Pic}(\M{n-1})_\Q \times \text{Pic}(\M{n-1})_\Q \to \text{Pic}\left(\M{n-1}\times_{\M{n-2}}\M{n-1} \right)_\Q$ is a surjection.
    \end{enumerate}
\end{cor}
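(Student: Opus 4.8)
\textbf{Proof proposal for \cref{thm:charKnuQ}.}

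This corollary is simply the specialization of \cref{thm:chargenQ} to the choice $S = [n-1]$ and $T = [n] \setminus \{n-1\}$, so the plan is to verify that the data of \cref{thm:charKnuQ} matches the general setup and then invoke the three parts of \cref{thm:chargenQ} verbatim. First I would record the dictionary: with these choices we have $|S| = |T| = n-1 \ge 3$, the intersection $S \cap T = [n-2]$, the complements are $S^c = \{n\}$ and $T^c = \{n-1\}$, and the forgetful projections become $\pi^S = \pi_n$ and $\pi^T = \pi_{n-1}$ in the notation of \cref{subsec:nota}. Under this identification $\M{S} = \M{S\cap T}$-fiber product $\M{S}\times_{\M{S\cap T}}\M{T}$ is exactly $\M{n-1}\times_{\M{n-2}}\M{n-1}$, and $f_{S,T}$ is precisely $f_{\text{Knu}}$, as recorded in \cref{subsec:cont}.

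The remaining point is to check that the indexing set $F_{S,T}$ of the target $\Q^{F_{S,T}}$ coincides with $F_{\text{Knu}}$. Here I would appeal to the definition
\[ F_{S,T} = \left\{ F(I,J,K,L) \mid I \subseteq S^c \cap T^c \text{ or } I \subseteq S^c,\ J \subseteq T^c \right\}, \]
specialized to $S^c = \{n\}$, $T^c = \{n-1\}$. Since $S^c \cap T^c = \emptyset$, the first alternative is vacuous (no part of the partition may be empty), so $F_{S,T}$ reduces to those F-curves with $I \subseteq \{n\}$ and $J \subseteq \{n-1\}$, i.e. $I = \{n\}$ and $J = \{n-1\}$. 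After relabeling the parts of the four-part partition this is exactly the set $F_{\text{Knu}} = \{F(I,J,K,L) \mid K = \{n\},\ L = \{n-1\}\}$ from \cref{subsec:cont}, so the two index sets agree.

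With the dictionary established, parts (1), (2), and (3) of \cref{thm:charKnuQ} are immediate transcriptions of the corresponding parts of \cref{thm:chargenQ}: the four-term exact sequence becomes
\[ 0 \to \text{Pic}(\M{n-2})_\Q \xrightarrow{(\pi_n^\ast, -\pi_{n-1}^\ast)} \text{Pic}(\M{n-1})_\Q \times \text{Pic}(\M{n-1})_\Q \xrightarrow{\pi_{n-1}^\ast + \pi_n^\ast} \text{Pic}(\M{n})_\Q \xrightarrow{c} \Q^{F_{\text{Knu}}}, \]
the image of $f_{\text{Knu}}^\ast$ is $\text{Ker } c$, and the box-product map is surjective. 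The only genuine content beyond substitution is the identification of the two alternatives in the definition of $F_{S,T}$, and since $S^c \cap T^c$ is empty the argument is entirely combinatorial and routine; no real obstacle arises. I would therefore present the proof as a one-line reduction to \cref{thm:chargenQ} together with the verification of the index-set bijection.
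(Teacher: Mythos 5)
Your proposal is correct and is exactly the paper's own treatment: the paper introduces \cref{thm:charKnuQ} explicitly as the special case of \cref{thm:chargenQ} with $S=[n-1]$ and $T=[n]\setminus\{n-1\}$, and offers no further proof beyond that substitution. Your verification that $S^c=\{n\}$, $T^c=\{n-1\}$, $\pi^S=\pi_n$, $\pi^T=\pi_{n-1}$, and that $F_{S,T}$ reduces to $F_{\text{Knu}}$ (the first alternative in the definition of $F_{S,T}$ being vacuous since $S^c\cap T^c=\emptyset$ and the parts of the partition are nonempty) is the only content needed, and it is carried out correctly.
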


\begin{rmk}
    In \cref{thm:charKnuQ} (1), $c$ is indeed also surjective, according to  \cref{cor:knusln}.
\end{rmk}

To prove these theorems for $\text{Pic}\left(\M{n}\right)$, we use \cref{lem:QtoZ}.

\begin{lem}\label{lem:QtoZ}
    Let $A,B,C$ be $\Z$-modules with morphisms $A\xrightarrow{f} B\xrightarrow{g} C$. If the sequence $A_\Q\xrightarrow{f} B_\Q\xrightarrow{g} C_\Q$ is exact and the cokernel of $f:A\to B$ is torsion-free, then $A\to B\to C$ is exact.
\end{lem}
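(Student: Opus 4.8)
The plan is to prove exactness at $B$, i.e. the equality $\ker g = \operatorname{im} f$ of submodules of $B$. The inclusion $\operatorname{im} f \subseteq \ker g$ is just the complex condition $g\circ f = 0$, which in every application here is built into the maps (the relevant F-curves are contracted, so $c\circ(\pi^{S,\ast}+\pi^{T,\ast})=0$), and which I take as part of the setup; the substantive content is the reverse inclusion, and torsion-freeness of $\operatorname{coker} f$ is exactly the hypothesis that upgrades the rational statement to an integral one. Accordingly, write $I := \operatorname{im} f$ and $K := \ker g$, so that $I \subseteq K \subseteq B$, and the goal becomes showing that the submodule $K/I \subseteq B/I = \operatorname{coker} f$ is trivial.

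The key point is that $-\otimes_{\Z}\Q$ is exact, hence commutes with forming kernels and images. Applying it to $A \xrightarrow{f} B$ yields $I_\Q = \operatorname{im}(f_\Q)$, and applying it to $B \xrightarrow{g} C$ yields $K_\Q = \ker(g_\Q)$. The assumed exactness of $A_\Q \to B_\Q \to C_\Q$ at $B_\Q$ then gives $K_\Q = \ker(g_\Q) = \operatorname{im}(f_\Q) = I_\Q$. Since $I \subseteq K$ and $\Q$ is flat over $\Z$, we get $(K/I)_\Q = K_\Q/I_\Q = 0$, so $K/I$ is a torsion $\Z$-module. On the other hand $K/I$ is a submodule of $\operatorname{coker} f$, which is torsion-free by hypothesis, so $K/I$ is torsion-free as well. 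A $\Z$-module that is at once torsion and torsion-free is zero, whence $K = I$, which is the claimed exactness.

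The main obstacle — indeed essentially the only subtle point — is the bookkeeping of torsion, and the heart of the proof is arranging for it to be controlled by $\operatorname{coker} f$ rather than by $B$ or $C$ individually. One must justify the identifications $(\ker g)_\Q = \ker(g_\Q)$ and $(\operatorname{im} f)_\Q = \operatorname{im}(f_\Q)$ from flatness of $\Q$, not from any injectivity of $B \to B_\Q$; in our case $B = \operatorname{Pic}(\M{n})$ happens to be free, but the argument does not use this. A more hands-on alternative avoids the flatness formalism: given $b \in \ker g$, rational exactness produces $x \in A_\Q$ with $f_\Q(x)=b$, and clearing denominators gives $a \in A$ and $m \in \Z_{>0}$ with $mb - f(a)$ torsion in $B$; projecting to $\operatorname{coker} f$ shows the class of $b$ is torsion and hence, by torsion-freeness, already zero, so $b \in \operatorname{im} f$. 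I prefer the flatness route, as it dispatches the torsion entirely through the clean identity $(K/I)_\Q=0$.
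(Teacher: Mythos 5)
Your proof is correct and rests on the same idea as the paper's: the class of any $b\in\ker g$ in $\operatorname{coker} f$ is torsion by rational exactness, hence zero by the torsion-freeness hypothesis. The paper's proof is essentially your ``hands-on alternative'' (clear denominators to get $nb=f(a)$, then quotient by $\operatorname{im} f$), so your preferred flatness packaging is only a cosmetic variation; your explicit remark that the complex condition $g\circ f=0$ must be taken as part of the setup (it does not follow integrally from rational exactness when $C$ has torsion) is a point the paper leaves implicit.
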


\begin{proof}
    Let $b\in \ker g$. By the exactness after tensoring $\Q$, there exists $n\in \N$ such that $nb=f(a)$ for some $a\in A$. Since the cokernel of $f$ is torsion-free, $b\in \text{Im }f$.
\end{proof}

\cref{thm:dual} allows for translation between $\text{Pic}$ and $A_1$.

\begin{prop}\label{thm:dual}
    The intersection pairing $A^1\left(\M{n}, \Z \right)\times A_1(\M{n}, \Z)\to \Z$ is a perfect pairing. Moreover, the pairing $\text{Pic}\left(\M{n} \right)\times A_1(\M{n}, \Z)\to \Z$ is also a perfect pairing.
\end{prop}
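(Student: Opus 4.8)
The plan is to derive both statements from Keel's computation of the integral Chow ring of $\M{n}$ in \cite{Ke92}. The essential input is that $\M{n}$ is smooth and projective of dimension $d = n-3$, that its integral Chow groups $A^k(\M{n},\Z)$ are finitely generated and \emph{free}, that the odd cohomology of $\M{n}(\C)$ vanishes, and that the cycle class map $A^k(\M{n},\Z)\to H^{2k}(\M{n}(\C),\Z)$ is an isomorphism; in particular rational, homological, and numerical equivalence all coincide on $\M{n}$. Since $\M{n}$ is smooth, $A^1(\M{n},\Z)=\text{Pic}(\M{n})$, and this group is free; this will let me deduce the second assertion from the first.

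First I would reduce the intersection pairing to a cup-product pairing on cohomology. Smoothness identifies the group of $1$-cycles $A_1(\M{n},\Z)$ with the codimension-$(d-1)$ group $A^{d-1}(\M{n},\Z)$, and the intersection pairing factors as
\[ A^1(\M{n},\Z)\times A^{d-1}(\M{n},\Z)\longrightarrow A^d(\M{n},\Z)=A_0(\M{n},\Z)\xrightarrow{\deg}\Z. \]
Under the cycle class isomorphisms this matches the cup-product pairing $H^2(\M{n}(\C),\Z)\times H^{2d-2}(\M{n}(\C),\Z)\to H^{2d}(\M{n}(\C),\Z)\cong\Z$.

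Next I would invoke Poincaré duality on the compact oriented real $2d$-manifold $\M{n}(\C)$: the cup-product pairing $H^k\times H^{2d-k}\to H^{2d}\cong\Z$ is unimodular modulo torsion. Because Keel's theorem guarantees that $H^2$ and $H^{2d-2}$ are free $\Z$-modules, there is no torsion to discard, so this is a perfect pairing of finitely generated free $\Z$-modules. Transporting back through the cycle class isomorphisms shows that $A^1(\M{n},\Z)\times A_1(\M{n},\Z)\to\Z$ is perfect, which is the first assertion. The Picard statement then follows at once: the identification $\text{Pic}(\M{n})=A^1(\M{n},\Z)$ is compatible with intersection against $A_1(\M{n},\Z)$, so the two pairings coincide and are both perfect.

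The only delicate point is that perfectness is an \emph{integral} statement: over $\Q$ non-degeneracy is automatic from the definition of numerical equivalence, but unimodularity over $\Z$ rests entirely on the torsion-freeness of the (co)homology of $\M{n}$. The argument must therefore use the integral form of Keel's theorem—freeness of $A^*(\M{n},\Z)$ together with the integral cycle class isomorphism—rather than its rational consequence; granting this, everything else is formal from Poincaré duality. Alternatively, one could establish freeness and duality directly by induction along Keel's realization of $\M{n}$ as an iterated blow-up of smooth centers, but citing \cite{Ke92} is cleaner.
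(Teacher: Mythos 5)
Your proposal is correct and follows essentially the same route as the paper's proof: both invoke Keel's results that the integral Chow groups of $\M{n}$ are free, that the cycle class map is an isomorphism, and that rational and numerical equivalence coincide, then transfer the intersection pairing to the cup product $H^2 \times H^{2n-8} \to \Z$ and conclude perfectness from Poincaré duality together with torsion-freeness. Your remark that unimodularity is genuinely an integral statement resting on freeness is exactly the point the paper's proof also hinges on.
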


\begin{proof}
    Note that, according to \cite{Ke92}, 
    \begin{enumerate}
        \item Rational equivalence and numerical equivalence coincide on $\M{n}$.
        \item The cycle class map $A^\ast(\M{n},\Z)\to \text{H}^\ast(\M{n}, \Z)$ is an isomorphism.
        \item $\text{H}^d(\M{n},\Z)$ is a free $\Z$-module of finite rank for any $d\in \N$.
    \end{enumerate}
    In particular, by (1), $\text{Pic}(\M{n})\simeq A^1(\M{n},\Z)$. Hence, it suffices to prove the first statement. According to (2), the paring $A^1\left(\M{n}, \Z \right)\times A_1(\M{n}, \Z)\to \Z$ corresponds to the cup product pairing $\text{H}^{2n-8}(\M{n}, \Z)\times \text{H}^2(\M{n}, \Z)\to \Z$. Since the second pairing is perfect by Poincaré duality, together with the freeness condition in (3), the first one is also perfect.
\end{proof}

When working with $\Q$-coefficients, \cref{thm:dual} is trivial, even for general projective smooth varieties, from the definition of numerical equivalence. However, it does not hold in general for projective smooth varieties with $\Z$-coefficients. Indeed, \cref{thm:dual} holds for a projective smooth variety $X$ over $\C$ if and only if the integral Hodge conjecture for $1$-cycles on $X$ is verified, up to torsion.

\begin{prop}\label{thm:charproj}
    The following sequences
    \[ 0\to \text{Pic}(\M{n-1})\xrightarrow{\pi_i^\ast} \text{Pic}(\M{n})\xrightarrow{c} \Z^{F_i} \]
    and
    \[ \Z^{F_i}\xrightarrow{c^\ast} A_1(\M{n}, \Z)\xrightarrow{\pi_{i,\ast}}  A_1(\M{n-1}, \Z)\to 0 \]
    are exact, where $F_i$ is the set of F-curves defined in \cref{thm:charprojQ}, $c^\ast$ realizes $F_{i}$ as $1$-cycles, and $c$ is the intersection pairing with $F_{i}$.
\end{prop}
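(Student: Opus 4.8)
The plan is to deduce both exact sequences from the rational versions already established, by combining Poincaré-style duality (\cref{thm:dual}) with the integral-to-rational lifting lemma (\cref{lem:QtoZ}). The first sequence should come almost directly from \cref{thm:charprojQ} together with \cref{lem:QtoZ}, while the second should be obtained as its dual via the perfect pairing of \cref{thm:dual}. I expect the main subtlety to lie in verifying the torsion-freeness hypothesis of \cref{lem:QtoZ}, i.e. that the cokernel of $\pi_i^\ast:\text{Pic}(\M{n-1})\to\text{Pic}(\M{n})$ is torsion-free.

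First I would establish exactness of the top sequence. Injectivity of $\pi_i^\ast$ holds since $\pi_i$ is a contraction, so $\pi_{i,\ast}\pi_i^\ast=\mathrm{id}$ on the level of cycles and $\pi_i^\ast$ is split injective rationally, hence injective integrally (as $\text{Pic}(\M{n-1})$ is free by \cref{thm:dual}). To get exactness at $\text{Pic}(\M{n})$, I would apply \cref{lem:QtoZ} with $A=\text{Pic}(\M{n-1})$, $B=\text{Pic}(\M{n})$, $C=\Z^{F_i}$, and the maps $\pi_i^\ast$ and $c$. The rational exactness is exactly \cref{thm:charprojQ}. For the torsion-freeness of $\mathrm{coker}\,\pi_i^\ast$: the key point is that the splitting $\pi_{i,\ast}$ of $\pi_i^\ast$ exhibits $\text{Pic}(\M{n})$ as a direct sum $\pi_i^\ast\text{Pic}(\M{n-1})\oplus K$ integrally, where $K=\ker\pi_{i,\ast}$ on $A^1$. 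Since $\text{Pic}(\M{n})$ is free (\cref{thm:dual}), so is the direct summand $K\cong\mathrm{coker}\,\pi_i^\ast$, which is in particular torsion-free. This verifies the hypothesis and yields integral exactness at $\text{Pic}(\M{n})$.

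For the second sequence, I would dualize. By \cref{thm:dual}, the pairing $\text{Pic}(\M{n})\times A_1(\M{n},\Z)\to\Z$ is perfect, and likewise on $\M{n-1}$. The map $c^\ast:\Z^{F_i}\to A_1(\M{n},\Z)$ realizes each F-curve in $F_i$ as its class in $A_1$, and is adjoint to $c$; similarly $\pi_{i,\ast}:A_1(\M{n},\Z)\to A_1(\M{n-1},\Z)$ is adjoint to $\pi_i^\ast$ under the two perfect pairings (projection formula). Thus the second sequence is precisely the $\Z$-linear dual $\mathrm{Hom}(-,\Z)$ of the first sequence, under the identifications $A_1(\M{n},\Z)\cong\mathrm{Hom}(\text{Pic}(\M{n}),\Z)$ and $A_1(\M{n-1},\Z)\cong\mathrm{Hom}(\text{Pic}(\M{n-1}),\Z)$ supplied by \cref{thm:dual}. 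Applying $\mathrm{Hom}(-,\Z)$ to a left-exact sequence $0\to A\to B\to C$ of finitely generated free $\Z$-modules gives a right-exact sequence $C^\vee\to B^\vee\to A^\vee\to 0$ \emph{provided} the cokernel of $A\to B$ is free (equivalently the dual of $B\to C$ is surjective onto $A^\vee$); this is exactly the freeness of $\mathrm{coker}\,\pi_i^\ast$ established above. Hence surjectivity of $\pi_{i,\ast}$ and exactness at $A_1(\M{n},\Z)$ follow.

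The main obstacle, as indicated, is the torsion-freeness of $\mathrm{coker}\,\pi_i^\ast$, which is what makes the integral (rather than merely rational) statement work and what makes the duality between the two sequences clean. I would handle it via the splitting $\pi_{i,\ast}\pi_i^\ast=\mathrm{id}$ together with the freeness of $\text{Pic}(\M{n})$ from \cref{thm:dual}; once that is in place, both \cref{lem:QtoZ} and the $\mathrm{Hom}(-,\Z)$-dualization go through mechanically, and the perfectness of the pairing in \cref{thm:dual} guarantees that no torsion is lost when passing between $\text{Pic}$ and $A_1$.
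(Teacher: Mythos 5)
Your treatment of the first sequence follows the paper's route (inject via the contraction property, then \cref{lem:QtoZ} plus torsion-freeness of $\mathrm{coker}\,\pi_i^\ast$), but the splitting you invoke is the wrong one: $\pi_{i,\ast}\pi_i^\ast$ is \emph{not} the identity on divisor classes --- $\pi_i$ has one-dimensional fibers, so $\pi_{i,\ast}$ sends $A^1(\M{n},\Z)$ into $A^0(\M{n-1},\Z)$ and $\pi_{i,\ast}\pi_i^\ast=0$ there. The splitting that actually works, and the one the paper uses, is $s_i^\ast$ for a section $s_i\colon\M{n-1}\to\M{n}$ of $\pi_i$, which gives $s_i^\ast\pi_i^\ast=\mathrm{id}$ and identifies $\mathrm{coker}\,\pi_i^\ast$ with $\ker s_i^\ast\subseteq\mathrm{Pic}(\M{n})$, hence torsion-free. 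This is a local repair.

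The genuine gap is in the second sequence. Dualizing $0\to A\to B\xrightarrow{c} C$ and asking for exactness of $C^\vee\to B^\vee\to A^\vee\to 0$ requires two things: surjectivity of $B^\vee\to A^\vee$ (controlled by $\mathrm{coker}(A\to B)$, which here is automatic since $B/A\cong\mathrm{Im}\,c$ embeds in the free module $C$), and exactness at $B^\vee$, i.e.\ surjectivity of $C^\vee\to(\mathrm{Im}\,c)^\vee$, which is controlled by $\mathrm{Ext}^1(\mathrm{coker}\,c,\Z)$ and hence requires $\mathrm{coker}\,c$ --- the cokernel of the intersection pairing $\mathrm{Pic}(\M{n})\to\Z^{F_i}$ --- to be torsion-free. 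You have attached the hypothesis to the wrong cokernel: torsion-freeness of $\mathrm{coker}\,\pi_i^\ast$ does not give exactness at $A_1(\M{n},\Z)$, i.e.\ it does not show that every integral $1$-cycle killed by $\pi_{i,\ast}$ is an \emph{integral} combination of the curves in $F_i$ (a rational combination is all your argument yields). This saturation statement is the substantive content of the paper's proof: it exhibits $N=2^{n-2}-(n-2)-1$ boundary divisors $\delta_A$ and $F$-curves $F_A\in F_i$ whose intersection matrix is upper triangular with $1$'s on the diagonal (using \cite[Lemma 4.3]{KM13}), so that $\mathrm{Im}\,c$ contains a rank-$N$ sublattice split off from $\Z^{F_i}$, whence $\mathrm{coker}\,c$ is torsion-free. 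Without some such unimodularity argument your dualization step does not go through.
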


\begin{proof}
    First, we prove the exactness of the first sequence. The map $\pi_i^\ast$ is injective since $\pi_i$ is a contraction. For the rest of the sequence, by \cref{thm:charprojQ} and \cref{lem:QtoZ}, it is sufficient to show that the cokernel of $\pi_i^\ast$ is torsion-free. This condition is met since $\pi_i:\M{n}\to\M{n-1}$ has a section $s:\M{n-1}\to\M{n}$ and the cokernel of $\pi_i^\ast$ coincides with the kernel of $s^\ast$. Hence, the first sequence is exact.

    We now aim to prove the exactness of the second sequence by dualizing the first. However, there is an obstruction: if the cokernel of $c$ is not torsion-free, then the dual of the first exact sequence may fail to be exact. Notably, this condition is also sufficient: if the cokernel of $c$ is torsion-free, then the first exact sequence can be decomposed into two short exact sequences
    \[
    0 \to \text{Pic}(\M{n-1}) \xrightarrow{\pi_i^\ast} \text{Pic}(\M{n}) \xrightarrow{c} \text{Im }c \to 0
    \quad \text{and} \quad
    0 \to \text{Im }c \to \Z^{F_i} \to \text{Coker }c \to 0.
    \]
    Since all modules involved are free $\Z$-modules, these sequences remain exact after taking duals. By combining their duals, we obtain the second exact sequence via \cref{thm:dual}. Therefore, it suffices to prove that $\text{coker }c$ is torsion-free.

    Note that the rank of $\text{Pic}(\M{n})$ is $2^{n-1}-\binom{n}{2}-1$ and the rank of $\text{Pic}(\M{n-1})$ is $2^{n-2}-\binom{n-1}{2}-1$, so the rank of the cokernel of $c$ is $2^{n-2}-(n-2)-1$. Hence, it suffices to show that the image of $c$ contains a sub-$\Z$-module of rank $N:=2^{n-2}-(n-2)-1$, whose inclusion into $\Z^{F_i}$ has a torsion-free cokernel. We prove this by demonstrating the following: there exists line bundles $\delta_1,\cdots, \delta_N\in \text{Pic}(\M{n})$ and $F$-curves $F_1,\cdots, F_N\in F_i$ such that the determinant of the intersection matrix $(\delta_k\cdot F_l)_{1\le k,l\le N}$ is $1$. If this is true, then the image of $\delta_1,\cdots, \delta_N$ under $c$ generates the desired submodule.

    Fix $j\in \left\{1,2,\cdots, n\right\}\setminus\left\{i\right\}$. Let $J=\left\{1,2,\cdots, n\right\}\setminus\left\{i,j\right\}$ and let $I$ be the set of subsets of $J$ that consist of at least $2$ elements. Then $|I|=N$, so we use $I$ as an indexing set. For $A\in I$, define $\delta_A$ be the boundary divisor of $\M{n}$ corresponds to the clutching map $\xi:\M{A+\bullet}\times\M{A^c+\bullet}\to \M{n}$. Since $i,j\in A$ and $|A|\ge 2$, this is well-defined. Also, let $F_A=F(\left\{i\right\}, \left\{\text{min }A\right\},A\setminus\left\{\text{min }A\right\}, (A\cup i)^c )$. We remark that the specific choice of $\text{min }A$ plays no role here; we just want to choose an element from each $A$. Choose a bijection $h:\left\{1,2,\cdots, N\right\}\to I$ so that if $k<l$ then $|h(k)|\ge |h(l)|$. For $1\le k,l\le N$, let $\delta_k:=\delta_{h(k)}$ and $F_l:=F_{h(l)}$. According to \cite[Lemma 4.3]{KM13},
    \[ \delta_k\cdot F_k=1,\ \delta_k\cdot F_l=0\text{ if }k<l. \]
    Hence, the matrix $(\delta_k\cdot F_l)_{1\le k,l\le N}$ is an upper triangular matrix whose diagonal entries are 1, so the determinant is $1$. An example for the case $n=5$, $i=5$, and $j=4$ is given below:
    \begin{center}
    \begin{tabular}{ c | c | c | c | c }
     & $\delta_1=\delta_{\left\{1,2,3\right\}}$ & $\delta_2=\delta_{\left\{1,2\right\}}$ & $\delta_3=\delta_{\left\{1,3\right\}}$ & $\delta_4=\delta_{\left\{2,3\right\}}$ \\ \hline
    $F_1=F(5,1, \left\{2,3\right\}, 4)$ & $1$ & $0$ & $0$ & $-1$ \\  \hline
    $F_2=F(5,1,2,\left\{3,4\right\})$   & $0$ & $1$ & $0$ & $0$ \\   \hline
    $F_3=F(5,1,3,\left\{2,4\right\})$   & $0$ & $0$ & $1$ & $0$ \\ \hline
    $F_4=F(5,2,3,\left\{1,4\right\})$   & $0$ & $0$ & $0$ & $1$
    \end{tabular}.
    \end{center}
    This demonstrates that the cokernel of $c$ is torsion-free.
\end{proof}

For \cref{thm:charKnu}, we need the following technical lemma.

\begin{lem}\label{lem:partition}
Let $X$ be a finite set and $A$ a nonempty proper subset of $X$. There exists a function $a: X \to \mathbb{N}$ such that for any subset $B \subseteq X$, $\sum_{b \in B} a_b = \sum_{b \not\in B} a_b$ holds if and only if $B = A$ or $B = A^c$.
\end{lem}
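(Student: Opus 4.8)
The plan is to reformulate the statement as a genericity (hyperplane‑avoidance) problem. For a subset $B \subseteq X$, write $\ell_B(x) = \sum_{b\in B} x_b - \sum_{b\notin B} x_b$, a linear functional on $\mathbb{R}^X$ whose coefficient vector has all entries equal to $\pm 1$. The conclusion we want is precisely that the weight vector $a$ satisfies $\ell_A(a) = 0$ while $\ell_B(a) \ne 0$ for every $B \notin \{A, A^c\}$. Note that $\ell_{B^c} = -\ell_B$, so the ``balanced'' subsets automatically come in complementary pairs, and the single equation $\ell_A(a)=0$ forces both $A$ and $A^c$ to be balanced; thus the whole lemma reduces to producing an $a \in \mathbb{N}^X$ realizing exactly this vanishing pattern.

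First I would record the separation property of the hyperplanes $H_B := \ker \ell_B \subseteq \mathbb{R}^X$. Since the coefficient vector of $\ell_B$ is the $\pm 1$‑vector determined by $B$, we have $\ell_B = \pm\,\ell_{B'}$ if and only if $B' \in \{B, B^c\}$, and therefore $H_B = H_{B'}$ if and only if $B' \in \{B, B^c\}$. In particular, for every $B \notin \{A, A^c\}$ the intersection $H_A \cap H_B$ is a \emph{proper}, codimension‑one linear subspace of the hyperplane $H_A$ (it cannot equal $H_A$, since that would force $H_B = H_A$).

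Next I would locate the admissible weights inside $H_A$. Because $A$ is a nonempty proper subset, $|X| \ge 2$, so $\dim H_A = |X| - 1 \ge 1$. The open positive part $U := H_A \cap \mathbb{R}_{>0}^X$ is nonempty — for instance the vector assigning $|A^c|$ to each point of $A$ and $|A|$ to each point of $A^c$ lies in $U$, since both sides then sum to $|A|\,|A^c|$ — and $U$ is relatively open in $H_A$. The decisive elementary fact is that a nonempty relatively open subset of a positive‑dimensional affine space is never contained in a finite union of proper linear subspaces; applying this to $U$ and the finitely many subspaces $H_A \cap H_B$ with $B \notin \{A, A^c\}$ produces a point of $U$ avoiding all of them. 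As every equation involved is rational, such a point may be taken rational, and since the conditions $\ell_A = 0$, $\ell_B \ne 0$, and $x > 0$ are all homogeneous, clearing denominators yields the desired $a \in \mathbb{N}^X$ (in fact with strictly positive entries).

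The main obstacle is reconciling three simultaneous demands: the equality $\sum_{b\in A} a_b = \sum_{b\notin A} a_b$ pins $a$ to the hyperplane $H_A$, nonnegativity confines it to an orthant, and the uniqueness requirement forces it to avoid finitely many subspaces. The crux that makes these compatible is the verification that $H_A \ne H_B$ for all $B \notin \{A, A^c\}$ (so the forbidden loci are genuinely proper \emph{inside} $H_A$), combined with the observation that an open piece of $H_A$ cannot be swallowed by finitely many proper subspaces. Alternatively one could attempt an explicit construction using fast‑growing weights on $A$ and on $A^c$ separately and then rescaling the two blocks to equalize their totals, but the genericity argument above is shorter and uniform in the relative sizes of $A$ and $A^c$.
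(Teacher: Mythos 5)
Your proposal is correct and follows essentially the same route as the paper's proof: both realize the admissible weights as the positive part of the hyperplane $\sum_{b\in A}x_b=\sum_{b\notin A}x_b$, observe that each unwanted balance condition cuts out a proper codimension-one subspace of it, avoid the finite union of these by openness, and then pass to a rational point and clear denominators. Your write-up is slightly more explicit than the paper's (you verify that $H_B\ne H_A$ for $B\notin\{A,A^c\}$ and exhibit a concrete positive point on $H_A$), but the underlying argument is identical.
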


\begin{proof}
    For any $B\subseteq X$, let
    \[ M=\left\{ a:X\to \R_{>0} \ |\ \sum_{b \in A} a_b = \sum_{b \not\in A} a_b \right\},\ M_B:=\left\{g\in M\ |\ \sum_{b \in B} a_b = \sum_{b \not\in B} a_b \right\}. \]
    Then, $M$ is a $|X|-1$-dimensonal manifold, and $M_B$ is a $|X|-2$-dimensional embedded submanifold for every $B\ne A,A^c$. Since $\Q^{X}\cap M$ is dense in $M$, if we let $N:=M\setminus \cup_{B\ne A,A^c}M_B$, then $N\cap \Q^{X}\ne \emptyset$. Choose $a\in N\cap \Q^{X}$. After multiplying this by the least common multiple of denominators, we may assume that the value of $a$ is contained in $\N$. By construction, $a$ is the desired function. 
\end{proof}

As a consequence, we obtain \cref{cor:knusln}, which plays an important role in the proof of \cref{thm:charKnu} by implying that the intersection pairing
\[
\text{Pic}(\M{n}) \xrightarrow{c} \Z^{F_{\text{Knu}}}
\]
is surjective. Although only a simplified version of \cref{cor:knusln} is necessary for our purposes—for instance, the base point freeness is not necessary, and thus the latter part of the proof could be omitted—we will make use of the full strength of \cref{cor:knusln} in \cref{subsec:Mori}. Therefore, we present the complete proof of the corollary here.

\begin{cor}\label{cor:knusln}
For any $C \in F_{\text{Knu}}$, there exists a base point free line bundle $\mathcal{L}$ on $\M{n}$ such that $\mathcal{L} \cdot C = 1$ and $\mathcal{L} \cdot D = 0$ for every $D \in F_{\text{Knu}}$ not equal to $C$.
\end{cor}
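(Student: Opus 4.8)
The plan is to produce $\mathcal{L}$ directly as a single type A, level $1$ coinvariant divisor $\mathbb{D}_{0,n}^m(a_\bullet)$, so that base point freeness comes for free from \cref{thm:glogen} and every intersection number reduces, via \cref{eqn:lattice} and \cref{thm:confcal}, to a degree computation on $\M{4}$. First I would record that the curves in $F_{\text{Knu}}$ are exactly the $F(I, J, \{n\}, \{n-1\})$ where $\{I, J\}$ ranges over the unordered partitions of $X := \{1, \ldots, n-2\}$ into two nonempty parts; thus fixing $C = F(I_0, J_0, \{n\}, \{n-1\})$ amounts to singling out one such partition. The role of \cref{lem:partition} is precisely to detect this partition by weights.

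The weights on the first $n-2$ markings are supplied by \cref{lem:partition} applied to $X$ with $A = I_0$: this yields positive integers $a_1, \ldots, a_{n-2}$ for which $\sum_{i \in B} a_i = \sum_{i \notin B} a_i$ holds exactly when $\{B, B^c\} = \{I_0, J_0\}$. Writing $S = \sum_{i=1}^{n-2} a_i$, the balancedness of $I_0$ forces $S = 2s$ to be even, and I would then set
\[ m = s + 1, \qquad a_{n-1} = a_n = 1, \qquad \mathcal{L} = \mathbb{D}_{0,n}^{s+1}(a_1, \ldots, a_{n-2}, 1, 1). \]
The total weight is $2s + 2 = 2m \equiv 0 \pmod m$, so $\mathcal{L}$ is a genuine rank-one coinvariant divisor (nonvanishing by \cref{thm:lattice}).

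To verify the intersection numbers I would run \cref{eqn:lattice}: for $D = F(I, J, \{n\}, \{n-1\})$ one has $\mathcal{L} \cdot D = \deg \mathbb{D}_{0,4}^{m}(b_I, b_J, 1, 1)$, with $b_I, b_J$ the sums over $I, J$ reduced modulo $m$. When $D = C$ these reduced sums are $(s, s)$, so the multiset is $\{1, 1, s, s\}$, whose total is $2m$, and \cref{thm:confcal} gives degree $\min(1, m - s) = 1$. When $D \ne C$ the partition is unbalanced, so one part has sum $s - d$ and the other $s + d$ with $d \geq 1$ an integer; since $s + d \geq s + 1 = m$, the larger sum reduces to $d - 1$, and the multiset $\{s - d, d - 1, 1, 1\}$ has total $s + 1 = m \ne 2m$, forcing degree $0$. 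Hence $\mathcal{L} \cdot C = 1$ and $\mathcal{L} \cdot D = 0$ for every other $D \in F_{\text{Knu}}$, while $\mathcal{L}$ is base point free by \cref{thm:glogen}.

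The one genuinely delicate point — and the reason for the particular choice $m = s + 1$ with trailing weights equal to $1$ — is making the unbalanced partitions contribute $0$ rather than a nonzero degree. Because $b_I + b_J = S$ is independent of the partition when no reduction occurs, no choice of weights alone separates $C$ from the other curves; what separates them is the modular reduction, engineered so that the four-point total drops from $2m$ (balanced) to $m$ (unbalanced) exactly once $d \ge 1$. I expect verifying these degree computations — including the sorting required by \cref{thm:confcal} and the check that $m = s+1$ keeps all relevant reduced sums inside $[0,m)$ — to be the only place demanding care, with \cref{lem:partition} and \cref{thm:glogen} supplying the rest essentially for free.
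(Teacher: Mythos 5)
Your proposal is correct and follows essentially the same route as the paper's proof: apply \cref{lem:partition} to $\{1,\dots,n-2\}$ with $A=I_0$, take $\mathcal{L}=\mathbb{D}_{0,n}^{s+1}(a_1,\dots,a_{n-2},1,1)$, and reduce every intersection number to a degree computation on $\M{4}$ via \cref{eqn:lattice} and \cref{thm:confcal}, with base point freeness from \cref{thm:glogen}. The only ingredient the paper adds is a closing paragraph transporting the conclusion to an arbitrary base field (base point freeness there comes from realizing $\mathcal{L}$ as a pullback of an ample line bundle on a GIT quotient), which your argument, resting on coinvariant divisors over $\mathbb{C}$, does not address.
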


\begin{proof}
    We begin by assuming the base field has characteristic zero. Since the Picard group of $\M{n}$ and base point freeness do not depend on changing the base field, we may assume that the base field is $\mathbb{C}$. This allows us to utilize coinvariant divisors. Let $C = F(\left\{n-1\right\}, \left\{n\right\}, I, J)$ and $X = \left\{1, 2, \cdots, n-2\right\}$. By \cref{lem:partition}, we obtain a function $a: X \to \mathbb{N}$ such that $\sum_{i \in I} a_i = \sum_{i \in J} a_j$ and $\sum_{b \in A} a_b \neq \sum_{b \in A^c} a_b$ for $A \neq I, J$. Let $m = \sum_{i \in I} a_i + 1$ and $\mathcal{L} = \mathbb{D}_{0,n}^m(a_1, a_2, \cdots, a_{n-2}, 1, 1)$. Then, by \cref{thm:lattice} and \cref{thm:confcal},    \[ \mathcal{L}\cdot C = \mathbb{D}_{0,n}^m(a_1,a_2,\cdots, a_{n-2},1,1)\cdot F(\left\{n-1\right\}, \left\{n\right\}, I, J)=\text{deg }\mathbb{D}_{0,4}^m(1,1,m-1,m-1)=1.  \]
    Let $D\in F_{\text{Knu}}$ be another element, not equal to $C$. Suppose $D=F(\left\{n-1\right\}, \left\{n\right\}, I', J')$. Then, according to the construction, $\sum_{i\in I'}a_i\ne \sum_{j\in J'}a_j$. Without loss of generality, we may assume that $\sum_{i\in I'}a_i\ge m$. Hence, by \cref{thm:lattice},
    \[ \mathcal{L}\cdot D=\mathbb{D}_{0,n}^m\left(a_1,a_2,\cdots, a_{n-2},1,1\right)\cdot F(\left\{n-1\right\}, \left\{n\right\}, I', J')=\text{deg }\mathbb{D}_{0,4}^m\left(1,1,\sum_{i\in I'}a_i-m,\sum_{j\in J'}a_j\right).  \]
    Since $0\le \sum_{i\in I'}a_i-m,\sum_{j\in J'}a_j<m$, we can apply \cref{thm:confcal}, and deduce $\mathcal{L}\cdot D=0$.

    Now consider over an arbitrary base field. $\mathbb{D}_{0,n}^m(a_1, a_2, \cdots, a_{n-2}, 1, 1)$ is only defined over $\C$, but as mentioned in \cref{rmk:charp}, the Picard group of $\M{n}$ does not depend on the base field. Therefore, we can still define $\mathcal{L}$ as the line bundle corresponding to $\mathbb{D}_{0,n}^m(a_1, a_2, \cdots, a_{n-2}, 1, 1)$. Since the Chow ring also does not depend on the base field, as established in the preceding paragraph, $\mathcal{L}$ satisfies the intersection condition. It suffices to prove that it is base point free. By \cite[Theorem 1.2]{GG12}, $\mathcal{L}$ is a pullback of the ample line bundle on the GIT quotient $V_{d,k}\sslash_{\vec{c}}\text{SL}_{d+1}$, given that the construction of GIT quotient does not depend on the base field. Therefore, $\mathcal{L}$ is base point free.
\end{proof}

We now turn to the proof of \cref{thm:charcont} for $f_{\text{Knu}}$. 

\begin{thm}\label{thm:charKnu}
    The following sequences 
    \[  0\to \text{Pic}(\M{n-2}) \xrightarrow{(\pi_{n}^\ast, -\pi_{n-1}^\ast)}\text{Pic}(\M{n-1})\times \text{Pic}(\M{n-1})\xrightarrow{\pi_{n-1}^\ast+\pi_{n}^\ast} \text{Pic}(\M{n})\xrightarrow{c} \Z^{F_{\text{Knu}}}\to 0, \]
    and
    \[ 0\to \Z^{F_{\text{Knu}}}\xrightarrow{c^\ast}  A_1(\M{n}, \Z)\xrightarrow{(\pi_{n,\ast}, \pi_{n-1, \ast})} A_1(\M{n-1}, \Z)\times A_1(\M{n-1}, \Z)\xrightarrow{\pi_{n-1, \ast} - \pi_{n, \ast}} A_1(\M{n-2},\Z)\to 0 \]
    are exact, where $c^\ast$ realizes $F_{\text{Knu}}$ as $1$-cycles, and $c$ is the intersection with $F_{\text{Knu}}$.  In particular, \cref{thm:charKnuQ} holds for the integral Picard group.
\end{thm}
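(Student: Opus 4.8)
The plan is to deduce both exact sequences from the rational statement \cref{thm:charKnuQ} together with the integral input from the projection maps in \cref{thm:charproj}, the perfect pairing of \cref{thm:dual}, and the surjectivity furnished by \cref{cor:knusln}. Write $\beta = \pi_{n-1}^\ast + \pi_n^\ast$ and $\alpha = (\pi_n^\ast, -\pi_{n-1}^\ast)$ for the maps in the first sequence, and let $c_n \colon \text{Pic}(\M{n}) \to \Z^{F_n}$ be the intersection pairing with the larger family $F_n$ from \cref{thm:charprojQ}; since $F_{\text{Knu}} \subseteq F_n$, the Knudsen pairing $c$ factors as $c = r \circ c_n$, where $r \colon \Z^{F_n} \to \Z^{F_{\text{Knu}}}$ is the coordinate projection. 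The surjectivity of $c$ at the right end of the first sequence is exactly \cref{cor:knusln}, which produces for each $C \in F_{\text{Knu}}$ a line bundle mapping to the corresponding standard basis vector. Injectivity of $\alpha$ is immediate since $\pi_n^\ast$ is injective. Thus, by \cref{lem:QtoZ} and the rational exactness of \cref{thm:charKnuQ}, the whole first sequence will follow once I show that $\operatorname{coker}\alpha$ and $\operatorname{coker}\beta$ are torsion-free.

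The cokernel of $\alpha$ is handled by a splitting. Choosing a section $s$ of the projection $\pi_n \colon \M{n-1} \to \M{n-2}$ (a universal curve, which admits sections), the composite $s^\ast \circ \mathrm{pr}_1 \circ \alpha$ equals $s^\ast \pi_n^\ast = (\pi_n \circ s)^\ast = \mathrm{id}$, so $\alpha$ is a split injection and $\operatorname{coker}\alpha$ is a direct summand of a free module, hence free. This gives exactness at $\text{Pic}(\M{n-1}) \times \text{Pic}(\M{n-1})$.

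The essential difficulty is the torsion-freeness of $\operatorname{coker}\beta$, equivalently the integral saturation $\operatorname{Im}\beta = \ker c$. I would reduce this to the projection case as follows. By \cref{thm:charproj} applied to $\pi_n$, the image $\operatorname{Im} c_n$ is saturated in $\Z^{F_n}$ and $\ker c_n = \operatorname{Im}\pi_n^\ast$; passing to the quotient by $\ker c_n$ and using $\operatorname{Im}\beta = \operatorname{Im}\pi_{n-1}^\ast + \ker c_n$ identifies $\operatorname{coker}\beta$ with $\operatorname{Im} c_n / c_n(\operatorname{Im}\pi_{n-1}^\ast)$, so it suffices to prove that $c_n(\operatorname{Im}\pi_{n-1}^\ast)$ is saturated in $\operatorname{Im} c_n$. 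Since $r \circ c_n = c$, one has the exact identity $c_n(\ker c) = \ker r \cap \operatorname{Im} c_n$, and $\operatorname{Im}\pi_{n-1}^\ast \subseteq \ker c$; the claim therefore reduces to the reverse inclusion $c_n(\ker c) \subseteq c_n(\operatorname{Im}\pi_{n-1}^\ast)$. Evaluating via the projection formula, for $F \in F_n$ and $\mathcal{L} \in \text{Pic}(\M{n-1})$ we have $\pi_{n-1}^\ast \mathcal{L} \cdot F = \mathcal{L} \cdot \pi_{n-1,\ast} F$, and the key geometric fact is that $\pi_{n-1,\ast} F = 0$ exactly when $F \in F_{\text{Knu}}$ (forgetting the singleton leg $\{n-1\}$ destabilizes the spine), while for $F \notin F_{\text{Knu}}$ it is an F-curve of $\M{n-1}$; forgetting $n-1$ thus induces a surjection $q$ from $F_n \setminus F_{\text{Knu}}$ onto the corresponding family on $\M{n-1}$, and $c_n \circ \pi_{n-1}^\ast$ is the $q$-pullback of the projection pairing $c_n^{(n-1)}$ on $\M{n-1}$.

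Granting this, the inclusion $c_n(\ker c) \subseteq c_n(\operatorname{Im}\pi_{n-1}^\ast)$ splits into two points: that $c_n(x)$ is constant along the fibers of $q$ for $x \in \ker c$, and that the resulting vector on $\M{n-1}$ lies in $\operatorname{Im} c_n^{(n-1)}$. The second point follows from the saturation of $\operatorname{Im} c_n^{(n-1)}$, which is again \cref{thm:charproj} on $\M{n-1}$. The first point is the crux and where I expect the main obstacle: two curves $F, F'$ in a common $q$-fiber differ only by moving the point $n-1$ between two legs, and one must show that their difference $F - F'$ is an integral combination of curves in $F_{\text{Knu}}$, so that $x \cdot (F - F') = 0$ for every $x \perp F_{\text{Knu}}$. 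This is a classical F-curve relation on $\M{n}$ (pulled back from the relations on $\overline{\mathrm{M}}_{0,5}$), but verifying that it holds with the correct integral structure, rather than merely rationally, is the delicate step. Once $\operatorname{coker}\beta$ is shown torsion-free, exactness of the first sequence at $\text{Pic}(\M{n})$ follows from \cref{lem:QtoZ}. Finally, the second sequence is obtained by dualizing the first: with torsion-free cokernels established, the first sequence decomposes into short exact sequences of free $\Z$-modules, applying $\operatorname{Hom}(-,\Z)$ preserves exactness since all quotients are torsion-free, and the perfect pairing of \cref{thm:dual} identifies the duals of the $\text{Pic}$ groups with the corresponding $A_1$ groups and the transpose of $c$ with $c^\ast$, yielding the asserted exact sequence in $A_1$.
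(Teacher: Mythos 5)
Your proposal is correct, but it takes a genuinely different route from the paper, and the one step you flag as the ``delicate crux'' is in fact not delicate at all. The paper works on the curve side first: it proves exactness of the $A_1$-sequence at $A_1(\M{n-1},\Z)\times A_1(\M{n-1},\Z)$ directly, by exhibiting explicit generators of $\ker(\pi_{n-1,\ast}-\pi_{n,\ast})$ (pairs $(F(I\cup\{n-1\},J,K,L),F(I\cup\{n\},J,K,L))$ and $(F(\{n-1\},J,K,L),0)$, the latter handled by \cref{thm:charproj}) and lifting each to an F-curve on $\M{n}$; it then dualizes via \cref{thm:dual} to conclude that $\operatorname{coker}(\pi_{n-1}^\ast+\pi_n^\ast)$ is free, and finishes with \cref{thm:charKnuQ}, \cref{lem:QtoZ}, and \cref{cor:knusln}. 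You instead work on the Picard side, factoring $c=r\circ c_n$ and reducing the saturation of $\operatorname{Im}(\pi_{n-1}^\ast+\pi_n^\ast)$ to the saturation statements already inside the proof of \cref{thm:charproj}; this is a legitimate alternative that trades the paper's explicit generator computation for bookkeeping with the larger pairing $c_n$. As for your point (a): you do not need $F-F'$ to be an \emph{integral} combination of curves in $F_{\text{Knu}}$. For an integral $x\in\ker c$, the rational statement \cref{thm:charKnuQ} already writes $x=\pi_{n-1}^\ast\mathcal{L}_1+\pi_n^\ast\mathcal{L}_2$ with $\mathcal{L}_i\in\text{Pic}(\M{n-1})_\Q$, and the projection formula gives $x\cdot F=\mathcal{L}_1\cdot\pi_{n-1,\ast}F$, which depends only on $q(F)$; equivalently, a rational relation $F-F'\in\operatorname{span}_\Q F_{\text{Knu}}$ already forces the integer $x\cdot(F-F')$ to vanish for every integral $x$ orthogonal to $F_{\text{Knu}}$, since that pairing can be computed in $\Q$. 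The same observation supplies the hypothesis you need for point (b), namely that the descended vector lies in the rational image of $c_n^{(n-1)}$ before you invoke saturation. With that noted, your argument closes with no gap.
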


\begin{proof}
    We start by showing the exactness of the sequence 
    \[  A_1(\M{n}, \Z)\xrightarrow{(\pi_{n,\ast}, \pi_{n-1, \ast})} A_1(\M{n-1}, \Z)\times A_1(\M{n-1}, \Z)\xrightarrow{\pi_{n-1, \ast} - \pi_{n, \ast}} A_1(\M{n-2},\Z)\to 0. \]
    $\pi_{n-1, \ast} - \pi_{n, \ast}$ is surjective by \cref{thm:charproj} (this is just a corollary of the existence of a section of $\pi$). Therefore, it remains to check exactness at $A_1(\M{n-1}, \Z)\times A_1(\M{n-1}, \Z)$. 
    
    We claim that $\pi_{n-1,\ast} - \pi_{n,\ast}$ is spanned by the following two types of elements:
    \begin{enumerate}
        \item For any partition $[n-2] = I \sqcup J \sqcup K \sqcup L$ with $J$, $K$, and $L$ nonempty (while $I$ may be empty), consider the pair $(F(I \cup \{n-1\}, J, K, L), F(I \cup \{n\}, J, K, L))$.
        \item For any $F$-curve $F(\{n-1\}, J, K, L)$ on $\M{n-1}$, consider the pair $(F(\{n-1\}, J, K, L), 0)$.
    \end{enumerate}
    The $F$-curves generate $A_1(\M{n}, \mathbb{Z})$, so the quotient of $A_1(\M{n-1}, \mathbb{Z}) \times A_1(\M{n-1}, \mathbb{Z})$ by the submodule generated by elements of type (1) is naturally isomorphic to $A_1(\M{n-1}, \mathbb{Z})$. Moreover, the $F$-curves appearing in (2) are precisely those contained in $F_{n-1}$ (in the notation of \cref{thm:charproj}) on $\M{n-1}$. Therefore, by \cref{thm:charproj}, the quotient of $A_1(\M{n-1}, \mathbb{Z}) \times A_1(\M{n-1}, \mathbb{Z})$ by the submodule generated by both (1) and (2) is isomorphic to $A_1(\M{n-2}, \mathbb{Z})$. This demonstrates that the kernel of $\pi_{n-1,\ast} - \pi_{n,\ast}$ is indeed generated by the elements of types (1) and (2).
    
    Hence, it suffices to prove that (1) and (2) are included in the image of $(\pi_{n,\ast}, \pi_{n-1, \ast})$. (1) is the image of $F(I\cup\left\{n-1,n\right\},J,K,L)$ and (2) is the image of $F(\left\{n-1\right\}, J\cup\left\{n\right\},K,L)$. This completes the proof of the exactness.
    
    Let $X$ be the kernel of $(\pi_{n,\ast}, \pi_{n-1, \ast})$. Since $A_1(\M{n}, \Z)$ is a free $\Z$-module, $X$ is also a free $\Z$-module. Hence we have an exact sequence of free $\Z$-modules
    \[ 0\to X\to A_1(\M{n}, \Z)\xrightarrow{(\pi_{n,\ast}, \pi_{n-1, \ast})} A_1(\M{n-1}, \Z)\times A_1(\M{n-1}, \Z)\xrightarrow{\pi_{n-1, \ast} - \pi_{n, \ast}} A_1(\M{n-2},\Z)\to 0. \]
    Since each entry is a free $\Z$-module, the dual of this sequence is also exact. Hence, according to \cref{thm:dual}, we get an exact sequence
    \[ 0\to \text{Pic}(\M{n-2}) \xrightarrow{(\pi_{n}^\ast, -\pi_{n-1}^\ast)}\text{Pic}(\M{n-1})\times \text{Pic}(\M{n-1})\xrightarrow{\pi_{n-1}^\ast+\pi_{n}^\ast} \text{Pic}(\M{n})\to \text{Hom}(X,\Z)\to 0.\]
    Since $\text{Hom}(X,\Z)$ is also free, the cokernel of $\text{Pic}(\M{n-1})\times \text{Pic}(\M{n-1})\xrightarrow{\pi_{n-1}^\ast+\pi_{n}^\ast} \text{Pic}(\M{n})$ is torsion-free. Hence, by \cref{thm:charKnuQ} and \cref{lem:QtoZ}, the sequence
    \[  0\to \text{Pic}(\M{n-2}) \xrightarrow{(\pi_{n}^\ast, -\pi_{n-1}^\ast)}\text{Pic}(\M{n-1})\times \text{Pic}(\M{n-1})\xrightarrow{\pi_{n-1}^\ast+\pi_{n}^\ast} \text{Pic}(\M{n})\xrightarrow{c} \Z^{F_{\text{Knu}}}\to 0 \]
    is exact at $\text{Pic}(\M{n})$. The exactness of other parts except $\Z^{F_{\text{Knu}}}$ follows from the preceding argument. Moreover, \cref{cor:knusln} implies the exactness at $\Z^{F_{\text{Knu}}}$. This demonstrates the exactness of the first sequence in the statement. The exactness of the second sequence follows by taking the dual of the first sequence and applying \cref{thm:dual}.
\end{proof}

\begin{rmk}\label{rmk:Knu}
    The exact sequences in \cref{thm:charKnu} highlight a distinctive feature of Knudsen's construction. By comparing them with the exact sequences in \cref{thm:chargen}, we readily observe the structural simplicity of $f_{\text{Knu}}$—for example, $F_{\text{Knu}}$ is linearly independent. Moreover, together with \cref{cor:knusln}, this allows us to prove \cref{prop:kkne}, which shows that $\NE{f_{\text{Knu}}}$ is a simplicial cone. This simple structure of $f_{\text{Knu}}$ plays a crucial role in the proof of \cref{subsec:Mori}, where we classify certain contractions of $\M{n}$, and in \cref{thm:CDint}, which strengthens Fakhruddin's basis theorem. This represents another form of inductive structure on $f_{\text{Knu}}$, which may be useful in further applications as discussed in \cref{subsec:app}.
    
\end{rmk}

We next extend \cref{thm:chargenQ} to the integral setting. 

\begin{thm}\label{thm:chargen}
    Let $S,T\subseteq \left\{1,2,\cdots,n \right\}$ be subsets such that $|S|, |T|\ge 3$ and let $f_{S,T}$ and $F_{S,T}$ as described in \cref{subsec:cont}. 
    
    \begin{enumerate}
        \item The following sequence is exact:
        \[ 0\to \text{Pic}(\M{S\cap T}) \xrightarrow{(\pi^{S\cap T,\ast}, -\pi^{S\cap T,\ast})}\text{Pic}(\M{S})\times \text{Pic}(\M{T})\xrightarrow{\pi^{S,\ast}+\pi^{T,\ast}} \text{Pic}(\M{n})\xrightarrow{c} \Z^{F_{S,T}}.\]
        where $c$ is the intersection pairing. In particular, a line bundle $\mathcal{L}$ on $\M{n}$ is in the image of $\pi^{S,\ast}+\pi^{T,\ast}$ if and only if it intersects with curves in $F_{S,T}$ trivially.
        \item The image of  $f_{S,T}^\ast: \text{Pic}\left(\M{S}\times_{\M{S\cap T}}\M{T} \right)\to \text{Pic}\left(\M{n} \right)$ is $\text{Ker }c$.
        \item The  natural map $\text{Pic}(\M{S}) \times \text{Pic}(\M{T}) \to \text{Pic}\left(\M{S}\times_{\M{S\cap T}}\M{T}\right)$ is a surjection.
    \end{enumerate}
\end{thm}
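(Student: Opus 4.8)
The plan is to deduce the integral statement from its rational counterpart \cref{thm:chargenQ}, by the mechanism already used for $f_{\text{Knu}}$ in \cref{thm:charKnu} and for a single projection in \cref{thm:charproj}: feed \cref{thm:chargenQ} into \cref{lem:QtoZ}, using the perfect pairing of \cref{thm:dual} to pass between $\text{Pic}$ and $A_1$. Granting part (1) over $\Z$, parts (2) and (3) are purely formal and transcribe the proofs of \cref{thm:chargenQ}(2),(3) verbatim: the only inputs are that $\pi^S\times\pi^T$ factors through the contraction $f_{S,T}$, that $f_{S,T}^\ast$ is injective (\cref{thm:gencont}), and that $f_{S,T}^\ast L$ is $c$-trivial for every $L$ on the fiber product, all of which hold integrally. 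Thus the whole task is the integral exactness of the sequence in (1).

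Exactness at the two left-hand terms is cheap. The map $\pi^{S\cap T,\ast}$ is injective since $\pi^{S\cap T}$, a composition of forgetful maps, is a contraction; and because $\pi^{S\cap T}\colon\M{S}\to\M{S\cap T}$ admits a section, $\pi^{S\cap T,\ast}$ is split injective, whence a short chase shows $(\pi^{S\cap T,\ast},-\pi^{S\cap T,\ast})$ has torsion-free cokernel. Combined with \cref{thm:chargenQ}(1), \cref{lem:QtoZ} gives integral exactness at $\text{Pic}(\M{S})\times\text{Pic}(\M{T})$.

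The crux is exactness at $\text{Pic}(\M{n})$, which by \cref{lem:QtoZ} reduces to the torsion-freeness of $\operatorname{coker}(\pi^{S,\ast}+\pi^{T,\ast})$; note this needs no surjectivity of $c$, which is indeed not claimed. I would first reduce to $S\cup T=[n]$: in general $f_{S,T}$ factors through $\pi^{S\cup T}$, whose pullback is a saturated embedding, so torsion-freeness on $\M{S\cup T}$ propagates to $\M{n}$. When $S\cup T=[n]$, so that $[n]=S^c\sqcup T^c\sqcup(S\cap T)$, I would establish the exact sequence of free $A_1$-groups
\[
0\to X\to A_1(\M{n},\Z)\xrightarrow{(\pi^S_\ast,\pi^T_\ast)}A_1(\M{S},\Z)\times A_1(\M{T},\Z)\xrightarrow{\rho^S_\ast-\rho^T_\ast}A_1(\M{S\cap T},\Z)\to 0,
\]
where $\rho^S,\rho^T$ are the forgetful maps to $\M{S\cap T}$ and $X$ is the kernel of $(\pi^S_\ast,\pi^T_\ast)$; the right-hand surjectivity is immediate since $\rho^S_\ast$ alone is onto. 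Dualizing this four-term sequence of free modules through \cref{thm:dual} produces $0\to\text{Pic}(\M{S\cap T})\to\text{Pic}(\M{S})\times\text{Pic}(\M{T})\xrightarrow{\pi^{S,\ast}+\pi^{T,\ast}}\text{Pic}(\M{n})\to\operatorname{Hom}(X,\Z)\to 0$, so the cokernel is the free module $\operatorname{Hom}(X,\Z)$; \cref{lem:QtoZ} then upgrades \cref{thm:chargenQ}(1) to integral exactness.

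The main obstacle is proving middle-exactness of the $A_1$-sequence, i.e. that $\ker(\rho^S_\ast-\rho^T_\ast)$ is exactly the image of $(\pi^S_\ast,\pi^T_\ast)$. I would mimic the generator argument of \cref{thm:charKnu}: describe a spanning set of $\ker(\rho^S_\ast-\rho^T_\ast)$ by pairs of F-curves on $\M{S}$ and $\M{T}$ with matching pushforward to $\M{S\cap T}$ (the two families in \cref{thm:charKnu} are the case $|S^c|=|T^c|=1$), and lift each pair to an F-curve on $\M{n}$, with \cref{thm:charproj} pinning down the cokernel count. The natural organization is an induction on $|S^c|+|T^c|$ whose base cases are a single projection (\cref{thm:charproj}) and Knudsen's map (\cref{thm:charKnu}), the inductive step peeling one forgetful map off $\pi^S$ or $\pi^T$ and splicing the resulting projection sequences via a snake-lemma chase. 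The genuine work lies in this combinatorial verification that the F-curve pairs span and lift; the surrounding homological algebra over $\Z$ is formal.
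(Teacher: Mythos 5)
Your proposal is correct in its overall architecture and agrees with the paper up to the decisive step: like the paper, you handle (2) and (3) formally, get exactness at the first two terms from split injectivity of $\pi^{S\cap T,\ast}$, and reduce exactness at $\text{Pic}(\M{n})$ to torsion-freeness of $\operatorname{coker}(\pi^{S,\ast}+\pi^{T,\ast})$ via \cref{thm:chargenQ} and \cref{lem:QtoZ}. Where you genuinely diverge is in how that torsion-freeness is established. The paper inducts on $n$: after arranging $n\notin S$, $n-1\notin T$, it factors $\pi^{S,\ast}+\pi^{T,\ast}$ through the Knudsen map $\pi_{n-1}^\ast+\pi_n^\ast$, invokes \cref{thm:charKnu} as a black box, and then constructs by hand a free complement $X\times Z$ of the image of the auxiliary map $h$ inside $\text{Pic}(\M{n-1})\times\text{Pic}(\M{n-1})$ using the inductive hypothesis; no $A_1$-sequence for general $(S,T)$ is ever written down. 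You instead propose to prove the four-term exact sequence on $A_1(-,\Z)$ directly by generalizing the type~(1)/type~(2) generator argument from \cref{thm:charKnu} to arbitrary $(S,T)$ and then dualize via \cref{thm:dual}. This route is viable: the needed generators of $\ker(\rho^S_\ast-\rho^T_\ast)$ are (i) $(C,0)$ for $C$ an F-curve on $\M{S}$ with a part contained in $T^c$ (these generate $\ker\rho^S_\ast$ by iterating \cref{thm:charproj}, and lift to F-curves on $\M{n}$ killed by $\pi^T$), (ii) the symmetric elements for $T$, and (iii) for each F-curve $F(I,J,K,L)$ on $\M{S\cap T}$ the matched pair obtained by pushing forward $F(I\cup S^c\cup T^c,J,K,L)$; the quotient count then closes exactly as in \cref{thm:charKnu}. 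Your approach buys a cleaner and strictly stronger output — a symmetric four-term exact sequence on both $\text{Pic}$ and $A_1$ identifying $\operatorname{coker}(\pi^{S,\ast}+\pi^{T,\ast})$ as the free module $\operatorname{Hom}(X,\Z)$, i.e. an extension of \cref{thm:charKnu} to all $(S,T)$ that the paper does not state — at the cost of redoing the combinatorial spanning-and-lifting verification in full generality, which you correctly flag as the genuine work but leave as a sketch; you should carry out the three-family generator analysis above (and note that your proposed induction on $|S^c|+|T^c|$ is not actually needed for it) before this counts as a complete proof.
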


\begin{proof}
    (1) Since $\pi^{S\cap T,\ast}$ is injective, the sequence is exact at $\text{Pic}(\M{S\cap T})$. By applying \cref{thm:charproj} repeatedly, we find that the cokernel of $\pi^{S\cap T,\ast}$ is torsion-free. Hence, the cokernel of $(\pi^{S\cap T,\ast}, -\pi^{S\cap T,\ast})$ is also torsion-free. This, together with \cref{thm:chargenQ} and \cref{lem:QtoZ}, shows that the sequence is exact at $\text{Pic}(\M{S}) \times \text{Pic}(\M{T})$.
    
    Hence, it suffices to show that the exactness at $\text{Pic}(\M{n})$. Again, by \cref{thm:chargenQ} and \cref{lem:QtoZ}, it is enough to prove that the cokernel of $\pi^{S,\ast}+\pi^{T,\ast}$ is torsion-free.

    We use induction on $n$ to prove this. If $n=4$, there is nothing to prove. Assume the induction hypothesis and consider the induction step. If $S\subseteq T$ or $T\subseteq S$, then this can be proved by iterative application of \cref{thm:charproj}. Hence, without loss of generality, we may assume that $n\not\in S$, $n-1\not\in T$. Then, the map  $\text{Pic}\left(\M{S} \right)\times \text{Pic}\left(\M{T} \right)\to \text{Pic}\left(\M{n} \right)$ factors as
    \[ \text{Pic}\left(\M{S} \right)\times \text{Pic}\left(\M{T} \right)\to \text{Pic}\left(\M{n-1} \right)\times \text{Pic}\left(\M{n-1} \right)\xrightarrow{\pi_{n}^\ast + \pi_{n-1}^\ast}  \text{Pic}\left(\M{n} \right). \]
    Note that an extension of torsion-free $\Z$-module by a torsion-free $\Z$-module is again torsion-free. By \cref{thm:charKnu}, the kernel of the map $\pi_{n}^\ast + \pi_{n-1}^\ast$ is $\text{Pic}(\M{n-2})$ and the cokernel of $\pi_{n}^\ast + \pi_{n-1}^\ast$ is torsion-free. Hence, it is enough to show that the cokernel of
    \[ h:\text{Pic}\left(\M{S} \right)\times \text{Pic}\left(\M{T} \right)\times \text{Pic}(\M{n-2})\to \text{Pic}\left(\M{n-1} \right)\times \text{Pic}\left(\M{n-1} \right) \]
    is torsion-free, where the map $\text{Pic}(\M{n-2}) \to \text{Pic}(\M{n-1})\times \text{Pic}(\M{n-1})$ is given by $(\pi_{n}^\ast, -\pi_{n-1}^\ast)$. Consider the intersection $\text{Im }h\cap (\text{Pic}(\M{n-1})\times 0)$. This is generated by $\text{Pic}(\M{S})\times 0$ and $(L,0)$, where $L\in \text{Pic}(\M{n-2})\cap \text{Pic}(\M{T})$. By the first three terms of the exact sequence of (1), which we have already proven, $\text{Pic}(\M{n-2})\cap \text{Pic}(\M{T})=\text{Pic}(\M{T\setminus\left\{n-1,n\right\}})$. Therefore, $\text{Im }h\cap (\text{Pic}(\M{n-1})\times 0)$ is generated by $\text{Pic}(\M{S})\times 0$ and $\text{Pic}(\M{T\setminus\left\{n-1,n\right\}})\times 0$. If we consider $\text{Pic}(\M{S})$ and $\text{Pic}(\M{T\setminus\left\{n-1,n\right\}})$ as submodules of $\text{Pic}(\M{n-1})$, by the induction hypothesis, the cokernel of the natural map $i:\text{Pic}(\M{S})\times \text{Pic}(\M{T\setminus\left\{n-1,n\right\}})\to \text{Pic}(\M{n-1})$ is torsion-free. Since the cokernel is also finitely generated over $\Z$, it is a free $\Z$-module. Hence, there exists a complement $X$ of the image of $i$ within $\text{Pic}(\M{n-1})$. Note that $X$ is also a free $\Z$-module. By construction, the intersection of the image of $h$ and $X\times 0$ is trivial, and the submodule of $\text{Pic}(\M{n-1})\times \text{Pic}(\M{n-1})$ generated by the image of $h$ and $X\times 0$ is $\text{Pic}(\M{n-1})\times Y$, where $Y$ is the submodule of $\text{Pic}(\M{n-1})$ generated by $\text{Pic}\left(\M{T} \right)$ and $\text{Pic}(\M{n-2})$. Again by the induction hypothesis, $Y$ has a complement $Z$, which is also a free $\Z$-module. Altogether, $X\times Z$ is a free $\Z$-submodule of $\text{Pic}\left(\M{n-1} \right)\times \text{Pic}\left(\M{n-1} \right)$, which is a complement of the image of $h$. Therefore, the cokernel of $h$ is torsion-free. This completes the proof of (1).
    
    (2) and (3) can be verified in exactly the same way as the proof of \cref{thm:charKnuQ}; therefore, we omit their proofs.
\end{proof}

We now observe that \cref{thm:charcont} for Keel's construction is simply a special case of \cref{thm:chargen} with $S = [n-1]$ and $T = \left\{1,2,3,n\right\}$.

\begin{cor}\label{cor:charkeel}
     A line bundle on $\M{n}$ is in the image of $f_{\text{Keel}}^\ast$ if and only if it intersects trivially with F-curves in $F_{\text{Keel}}$.
\end{cor}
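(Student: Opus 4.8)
The plan is to recognize Keel's construction as the single instance of $f_{S,T}$ with $S=[n-1]$ and $T=\{1,2,3,n\}$, and then to read the conclusion directly off \cref{thm:chargen}. First I would verify the identification of the morphisms. The projection $\pi_n$ forgetting the $n$th point is exactly $\pi^S$ for $S=[n-1]$, while the projection retaining the points $1,2,3,n$ is $\pi^T$ for $T=\{1,2,3,n\}$. Here $S\cap T=\{1,2,3\}$, so $\M{S\cap T}=\M{3}$ is a single point and the fibre product degenerates to the honest product $\M{n-1}\times\M{4}$. Hence $f_{\text{Keel}}=f_{S,T}$, and in particular $f_{\text{Keel}}^\ast=f_{S,T}^\ast$.

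Next I would check that the associated sets of contracted F-curves coincide, i.e. that $F_{S,T}=F_{\text{Keel}}$. Computing complements gives $S^c=\{n\}$, $T^c=\{4,\dots,n-1\}$, and $S^c\cap T^c=\emptyset$. Since every part of an F-curve is nonempty, the alternative $I\subseteq S^c\cap T^c$ in the definition of $F_{S,T}$ is vacuous, so $F_{S,T}$ consists precisely of those $F(I,J,K,L)$ with $I=\{n\}$ and $J\subseteq\{4,\dots,n-1\}$. Because the class $F(I,J,K,L)$ depends only on the underlying unordered partition of $[n]$ into four parts, this set is exactly the set of F-curves in which one part equals $\{n\}$ and a second part avoids $\{1,2,3\}$ (and therefore also avoids $n$). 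That is precisely the description $K=\{n\}$, $1,2,3\notin L$ defining $F_{\text{Keel}}$, so the two sets agree.

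With both identifications in hand, the corollary is immediate: a line bundle lies in the image of $f_{\text{Keel}}^\ast=f_{S,T}^\ast$ precisely when it lies in $\text{Ker }c$ by \cref{thm:chargen}(2), and \cref{thm:chargen}(1) characterizes $\text{Ker }c$ as exactly the line bundles intersecting every curve of $F_{S,T}=F_{\text{Keel}}$ trivially.

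The only real content is the bookkeeping of the second paragraph—confirming that the symmetry of the F-curve classes makes the two combinatorial descriptions agree, and that no degenerate case (such as an empty part, or $L$ secretly containing $n$) is overlooked. I do not expect a genuine obstacle here, since all the substantive work is already carried out in \cref{thm:chargen}; this statement is a purely formal specialization, which is why I would present it as a corollary rather than reproving anything.
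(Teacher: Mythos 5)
Your proposal is correct and follows exactly the route the paper takes: the paper simply notes that the corollary is the special case $S=[n-1]$, $T=\{1,2,3,n\}$ of \cref{thm:chargen}, and your verification that $S^c=\{n\}$, $T^c=\{4,\dots,n-1\}$ forces $F_{S,T}=F_{\text{Keel}}$ (up to the symmetry of the parts of an F-curve) is precisely the bookkeeping the paper leaves implicit.
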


Note that \cref{cor:charkeel} is also a special case of a theorem by V. Alexeev in \cite[Proposition 4.6]{Fak12}. Alexeev also proved a result  analogous to \cref{thm:charcont} for certain moduli of weighted pointed rational curves; see \cite[Proposition 4.6]{Fak12}.

\section{Consequences of the Main Theorem}\label{sec:cons}

\subsection{Direct Consequences}\label{subsec:cons}

In this section, we list some of the direct corollaries of the main theorems. The characterization of line bundles, as given in \cref{thm:charKap} and \cref{thm:chargen}, naturally leads to the characterization of morphisms from $\M{n}$.

\begin{cor}\label{cor:morKap}
    Let $f:\overline{\mathrm{M}}_{0,n}\to X$ be a morphism to a projective variety $X$ that contracts $\Delta_{0, \left\{i,n\right\}}$ (cf. \cref{subsec:nota}) to a point for every $1\le i\le n-1$, or equivalently, contracts the F-curves in $F_{\text{Kap}}$. Then $f$ factors through $f_{\text{Kap}}$. In particular, if $f$ is such a contraction which is not constant, then $f=f_{\text{Kap}}.$
\end{cor}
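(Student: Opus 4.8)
The plan is to reduce the statement to \cref{thm:charKap} by pulling back an ample class. First I would fix an ample line bundle $A$ on $X$ and consider $f^\ast A\in\text{Pic}(\M{n})$. Since $f$ contracts every F-curve in $F_{\text{Kap}}$, the intersection number $f^\ast A\cdot F$ vanishes for all $F\in F_{\text{Kap}}$, so the hypothesis of \cref{thm:charKap} is met and yields an integer $c$ with $f^\ast A = c\,\psi_n$. Because $f^\ast A$ is nef (on curves it computes $A\cdot f_\ast(-)$) and $\psi_n\cdot F = 1$ for any F-curve $F\notin F_{\text{Kap}}$, one gets $c\ge 0$; moreover $f^\ast A$ is trivial, equivalently $c=0$, exactly when $f$ is constant.

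Next, using that $f_{\text{Kap}}$ is a contraction corresponding to $\psi_n$, that is $\psi_n = f_{\text{Kap}}^\ast\mathcal{O}_{\mathbb{P}^{n-3}}(1)$, I would rewrite $f^\ast A = f_{\text{Kap}}^\ast\mathcal{O}_{\mathbb{P}^{n-3}}(c)$. Then for any curve $C$ contracted by $f_{\text{Kap}}$ one has $A\cdot f_\ast C = f^\ast A\cdot C = \mathcal{O}(c)\cdot(f_{\text{Kap}})_\ast C = 0$, and the ampleness of $A$ forces $f_\ast C = 0$; hence $f$ contracts every curve contracted by $f_{\text{Kap}}$. Since $f_{\text{Kap}}$ is a contraction, so that $(f_{\text{Kap}})_\ast\mathcal{O} = \mathcal{O}$, the rigidity lemma \cite[Proposition 1.14 (b)]{Deb01} (used in the same form in the proof of \cref{prop:nefsem}) then produces a morphism $g\colon\mathbb{P}^{n-3}\to X$ with $f = g\circ f_{\text{Kap}}$. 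This gives the first assertion; the constant case $c=0$ factors trivially.

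For the final statement, I would assume $f$ is a non-constant contraction, so that $c>0$ by the first paragraph. Injectivity of $f_{\text{Kap}}^\ast$ applied to $f^\ast A = f_{\text{Kap}}^\ast(g^\ast A) = f_{\text{Kap}}^\ast\mathcal{O}(c)$ gives $g^\ast A = \mathcal{O}_{\mathbb{P}^{n-3}}(c)$, which is ample; since $g$ therefore contracts no curve it is finite. On the other hand, $f$ being a contraction gives $\mathcal{O}_X = f_\ast\mathcal{O} = g_\ast(f_{\text{Kap}})_\ast\mathcal{O} = g_\ast\mathcal{O}_{\mathbb{P}^{n-3}}$, so that $\mathbb{P}^{n-3}\cong\operatorname{Spec}_X(g_\ast\mathcal{O}_{\mathbb{P}^{n-3}}) = \operatorname{Spec}_X\mathcal{O}_X = X$ and $g$ is an isomorphism. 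Identifying $X$ with $\mathbb{P}^{n-3}$ via $g$ yields $f = f_{\text{Kap}}$.

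The substance of the corollary is entirely contained in \cref{thm:charKap}: once an ample class is recognized as a multiple of $\psi_n$, the remaining steps are formal. Accordingly, I expect the only points requiring genuine care to be the bookkeeping in the factorization argument—verifying that $f$ contracts every curve contracted by $f_{\text{Kap}}$ (not merely the F-curves of $F_{\text{Kap}}$), so that \cite[Proposition 1.14 (b)]{Deb01} applies—and, in the non-constant case, checking that the induced $g$ is finite with $g_\ast\mathcal{O}=\mathcal{O}$, so that the clean identification $\mathbb{P}^{n-3}=\operatorname{Spec}_X(g_\ast\mathcal{O})$ upgrades $g$ from a finite cover to an isomorphism.
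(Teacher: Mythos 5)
Your argument is correct and follows the same route as the paper: pull back an ample class, invoke \cref{thm:charKap} to identify it as a multiple of $\psi_n$, deduce that $f$ contracts every curve contracted by $f_{\text{Kap}}$, and conclude by the rigidity lemma of \cite[Proposition 1.14]{Deb01}. The only (cosmetic) difference is in the final assertion, where the paper simply cites that $\mathbb{P}^{n-3}$ admits no nontrivial contractions, while you spell this out by showing the induced $g$ is finite with $g_\ast\mathcal{O}=\mathcal{O}_X$ and hence an isomorphism.
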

\begin{proof}
    The equivalence can also be derived from the proof of \cref{thm:charKap}. Let $\mathcal{O}_X(1)$ be an ample line bundle on $X$. Then $f^\ast \mathcal{O}_X(1)$ satisfies the assumptions of \cref{thm:charKap}, and hence it is a constant multiple of $\psi_n$. This line bundle corresponds to Kapranov’s construction, so $f$ contracts every fiber of $f_{\text{Kap}}$. The first assertion then follows directly from \cite[Lemma 1.15 (b)]{Deb01}, and the second follows from this together with the fact that $\mathbb{P}^{n-3}$ admits no nontrivial contractions.
\end{proof}

\begin{cor}\label{cor:morgen}
    Let $S,T\subseteq \left\{1,2,\cdots,n \right\}$ be subsets such that $|S|, |T|\ge 3$ and let $f_{S,T}$ and $F_{S,T}$ as described in \cref{subsec:cont}. Also, let $f:\overline{\mathrm{M}}_{0,n}\to X$ be a morphism to a projective variety $X$ that contracts the F-curves in $F_{S,T}$ to a point. Then $f$ factors through $f_{S,T}$. 
\end{cor}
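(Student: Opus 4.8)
The plan is to mirror the proof of \cref{cor:morKap}, replacing the role of \cref{thm:charKap} with the integral characterization obtained in \cref{thm:chargen}. First I would fix an ample line bundle $\mathcal{O}_X(1)$ on $X$ and consider its pullback $f^\ast\mathcal{O}_X(1)$ on $\M{n}$. Since $f$ contracts each F-curve $C$ in $F_{S,T}$ to a point, we have $f^\ast\mathcal{O}_X(1)\cdot C=\mathcal{O}_X(1)\cdot f_\ast C=0$, so $f^\ast\mathcal{O}_X(1)$ lies in $\text{Ker }c$ in the notation of \cref{thm:chargen}. By part (2) of that theorem, $\text{Ker }c$ is exactly the image of $f_{S,T}^\ast$, so there is a line bundle $L$ on $\M{S}\times_{\M{S\cap T}}\M{T}$ with $f^\ast\mathcal{O}_X(1)=f_{S,T}^\ast L$.

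Next I would upgrade this numerical identity to the geometric statement that $f$ is constant on every fiber of $f_{S,T}$. Let $F$ be a fiber of $f_{S,T}$. Then $f_{S,T}^\ast L$ restricts trivially to $F$, hence so does $f^\ast\mathcal{O}_X(1)$. If $f$ did not contract $F$ to a point, then $f(F)$ would contain a curve, and choosing a curve inside $F$ mapping onto it would force $f^\ast\mathcal{O}_X(1)$ to have strictly positive degree there by ampleness of $\mathcal{O}_X(1)$, contradicting triviality on $F$. Thus $f$ is constant on every fiber of $f_{S,T}$.

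Finally, because $f_{S,T}$ is a contraction by \cref{thm:gencont}, it is proper with connected fibers (so that $(f_{S,T})_\ast\mathcal{O}_{\M{n}}=\mathcal{O}_{\M{S}\times_{\M{S\cap T}}\M{T}}$), and $X$ is projective hence separated. The rigidity statement \cite[Lemma 1.15 (b)]{Deb01} then applies verbatim and yields a morphism $g$ with $f=g\circ f_{S,T}$, which is the desired factorization.

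The genuinely substantive input is supplied entirely by \cref{thm:chargen}: its \emph{integral} exact sequence is what guarantees that $f^\ast\mathcal{O}_X(1)$ is an honest $f_{S,T}$-pullback rather than merely a pullback after inverting some integer, which is essential since we are pulling back a specific ample line bundle and cannot afford to lose torsion information. The step most likely to require care is the passage from numerical vanishing on $F_{S,T}$ to contraction of whole fibers, where one must combine triviality of $f^\ast\mathcal{O}_X(1)$ on a fiber (a consequence of its being an $f_{S,T}$-pullback, not merely of the vanishing on the one-dimensional strata) with ampleness of $\mathcal{O}_X(1)$; this is precisely where properness and connectedness of the fibers of $f_{S,T}$ must be invoked. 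All remaining steps, namely the choice of $\mathcal{O}_X(1)$ and the appeal to rigidity, are routine.
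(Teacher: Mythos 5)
Your proof is correct and follows essentially the same route as the paper, which establishes this corollary by repeating the argument of \cref{cor:morKap} with \cref{thm:chargen} in place of \cref{thm:charKap}: pull back an ample line bundle, identify it as an $f_{S,T}$-pullback via the characterization of $\operatorname{Ker} c$, and conclude by the rigidity lemma. One minor remark: the integral exactness is not actually essential at this point — the rational statement \cref{thm:chargenQ} would already suffice, since knowing that some positive multiple of $f^\ast\mathcal{O}_X(1)$ is an $f_{S,T}$-pullback is enough for the degree argument on fibers.
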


\begin{proof}
    The proof is analogous to the proof of \cref{cor:morKap}, using \cref{thm:chargen} instead of \cref{thm:charKap}. 
\end{proof}

One of the obstructions of computing $\NE{f_{\text{S,T}}}$ is that its rank is typically large. For example, $\NE{f_{\text{Kap}}}$ generate a codimension $1$ subspace of $A_1(\M{n})$, so the determination of them is only slightly less difficult than proving the F-conjecture. Therefore, we could view the calculation of the relative cone of curves for such contractions as a refinement of the F-conjecture. A significant benefit of the main theorems (\cref{thm:charKap}, \cref{thm:charKnu}, and \cref{thm:chargen}) is that it characterizes morphisms $f:\M{n}\to X$ which factor through $f_{\text{Kap}}$ or $f_{\text{S,T}}$, without the need to calculate $\NE{f_{\text{Kap}}}$ or $\NE{f_{\text{S,T}}}$. 

Since \cref{thm:charcont} pertains to a line bundle, it consequently induces a corresponding dual statement about $A_1(\M{n})$. Due to the statement of \cref{thm:charKap}, \cref{thm:charKnu} and \cref{thm:chargen}, we obtain results on the relation between F-curves.

\begin{cor}\label{cor:curveKap}
    $A_1(F_{\text{Kap}})$ is a codimension $1$ subspace of $A_1(\M{n})$ and does not contain any F-curves other than those in $F_{\text{Kap}}$.
\end{cor}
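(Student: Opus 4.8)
The plan is to read off both assertions from \cref{thm:charKap} by means of the perfect intersection pairing between $\text{Pic}(\M{n})_\Q$ and $A_1(\M{n})_\Q$ (over $\Q$, the nondegeneracy of this pairing is immediate from the definition of numerical equivalence, and is in any case a consequence of \cref{thm:dual}). The substantive content has already been established in \cref{thm:charKap}, so the corollary amounts to translating that statement into the language of $1$-cycles.

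For the codimension claim, I would consider the annihilator
\[ A_1(F_{\text{Kap}})^\perp := \left\{\mathcal{L} \in \text{Pic}(\M{n})_\Q \ \middle|\ \mathcal{L}\cdot C = 0 \text{ for all } C \in A_1(F_{\text{Kap}})\right\}. \]
Since $A_1(F_{\text{Kap}})$ is spanned by the curves in $F_{\text{Kap}}$, a line bundle lies in this annihilator exactly when it intersects every curve of $F_{\text{Kap}}$ trivially, which by \cref{thm:charKap} happens precisely for the rational multiples of $\psi_n$. Thus $A_1(F_{\text{Kap}})^\perp = \Q\psi_n$ is one-dimensional, and because the pairing is perfect the dimension of the annihilator equals the codimension of $A_1(F_{\text{Kap}})$. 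Hence $A_1(F_{\text{Kap}})$ has codimension $1$.

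For the second assertion, I would first note that, because the four-part partition defining an F-curve is unordered, an F-curve fails to lie in $F_{\text{Kap}}$ exactly when the part containing $n$ is the singleton $\{n\}$; that is, the F-curves outside $F_{\text{Kap}}$ are precisely those of the form $F(I,J,K,\{n\})$. Recall from the proof of \cref{thm:charKap} (cf. \cref{rmk:inv2}) that $\psi_n \cdot F(I,J,K,\{n\}) = 1$. Suppose for contradiction that such an $F$ lay in $A_1(F_{\text{Kap}})$. Since $\psi_n \in A_1(F_{\text{Kap}})^\perp$, it would pair trivially with every element of $A_1(F_{\text{Kap}})$, giving $\psi_n \cdot F = 0$ and contradicting $\psi_n \cdot F = 1$. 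Therefore no F-curve outside $F_{\text{Kap}}$ belongs to $A_1(F_{\text{Kap}})$.

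I do not anticipate a serious obstacle here, as the hard analysis is packaged into \cref{thm:charKap}; the only steps requiring care are the dictionary between ``trivial intersection with $F_{\text{Kap}}$'' and the annihilator/codimension count, which rests on the nondegeneracy recorded in \cref{thm:dual}, and the combinatorial identification of which F-curves lie outside $F_{\text{Kap}}$ together with their intersection with $\psi_n$.
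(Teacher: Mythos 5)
Your proposal is correct and follows essentially the same route as the paper: the codimension-one claim is the dual reading of \cref{thm:charKap} via the perfect intersection pairing, and the second claim follows from $\psi_n \cdot F(I,J,K,\{n\}) = 1$ together with $\psi_n$ annihilating $A_1(F_{\text{Kap}})$. The paper's proof is just a terser version of the same argument, and your added care about the unordered-partition description of the F-curves outside $F_{\text{Kap}}$ is accurate.
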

\begin{proof}
    The first assertion offers an alternative interpretation of \cref{thm:charKap}. The second assertion stems from the fact that for any F-curve $F$ not contained in $F_{\text{Kap}}$, the product $\psi_n\cdot F$ equals 1.
\end{proof}

\begin{cor}\label{cor:Knuind}
    $F_{\text{Knu}}$ is a linearly independent subset of $A_1(\M{n})$.
\end{cor}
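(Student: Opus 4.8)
The plan is to deduce the linear independence of $F_{\text{Knu}}$ directly from the exactness of the first sequence in \cref{thm:charKnu}, viewed through its dual pairing with $\text{Pic}(\M{n})$. The key observation is that the intersection map $c : \text{Pic}(\M{n}) \to \Z^{F_{\text{Knu}}}$ appearing in \cref{thm:charKnu} is surjective (this is exactly the content of the rightmost exactness, which rests on \cref{cor:knusln}). Linear independence of a set of curves in $A_1(\M{n})$ is equivalent to the statement that the pairing against $\text{Pic}(\M{n})$ can separate them, i.e. that the transpose of $c$ is injective — and surjectivity of $c$ over $\Z$ (hence over $\Q$) is precisely the dual condition.

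First I would recall that $c$ is, by definition, the map sending a line bundle $\mathcal{L}$ to the tuple of intersection numbers $(\mathcal{L} \cdot C)_{C \in F_{\text{Knu}}}$. By \cref{cor:knusln}, for each $C \in F_{\text{Knu}}$ there is a line bundle $\mathcal{L}_C$ with $\mathcal{L}_C \cdot C = 1$ and $\mathcal{L}_C \cdot D = 0$ for all $D \in F_{\text{Knu}} \setminus \{C\}$; thus $c(\mathcal{L}_C)$ is the standard basis vector $e_C \in \Z^{F_{\text{Knu}}}$, which shows $c$ is surjective. Dually, by \cref{thm:dual} the pairing $\text{Pic}(\M{n}) \times A_1(\M{n},\Z) \to \Z$ is perfect, so the realization map $c^\ast : \Z^{F_{\text{Knu}}} \to A_1(\M{n},\Z)$ that sends each generator to the class of the corresponding F-curve is the transpose of $c$. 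The surjectivity of $c$ over $\Q$ therefore forces $c^\ast$ to be injective over $\Q$.

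The point to make explicit is that injectivity of $c^\ast_\Q$ is exactly the linear independence of $F_{\text{Knu}}$: a nontrivial relation $\sum_{C} \lambda_C [C] = 0$ in $A_1(\M{n})_\Q$ would be a nonzero element of the kernel of $c^\ast_\Q$. Alternatively, and perhaps more transparently, I would argue by pairing directly: given a relation $\sum_C \lambda_C [C] = 0$, pair both sides against each $\mathcal{L}_D$ from \cref{cor:knusln}. Using $\mathcal{L}_D \cdot C = \delta_{DC}$ yields $\lambda_D = 0$ for every $D$, so the relation is trivial. This second formulation avoids any appeal to duality machinery and uses only the existence of the dual basis of line bundles.

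The main obstacle is not in the independence argument itself — which is a formal consequence once the separating line bundles are in hand — but in the input \cref{cor:knusln}, whose proof requires constructing, for each $C \in F_{\text{Knu}}$, a coinvariant divisor with the prescribed intersection pattern via \cref{lem:partition}, \cref{thm:lattice}, and \cref{thm:confcal}. Since \cref{cor:knusln} is already established in the excerpt, the remaining work here is genuinely light; the only subtlety to watch is keeping the distinction between the $\Q$-statement (linear independence, which is what is claimed) and the stronger integral surjectivity of $c$, so that I invoke exactly the hypotheses needed and no more.
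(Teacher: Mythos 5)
Your proof is correct, but it follows a different route from the one the paper actually writes out. The paper's proof of \cref{cor:Knuind} is a dimension count: it takes the exact sequence of \cref{thm:charKnuQ}(1), uses \cref{thm:basis} and \cref{thm:propvac} to identify the image of $\pi_{n-1}^\ast+\pi_n^\ast$ with the span of the $\mathbb{D}_{0,n}^2(a_\bullet)$ having $a_{n-1}=0$ or $a_n=0$, computes that this image has codimension $2^{n-3}-1=|F_{\text{Knu}}|$ in $\text{Pic}(\M{n})_\Q$, and concludes that since $|F_{\text{Knu}}|$ hyperplanes cut out a subspace of codimension exactly $|F_{\text{Knu}}|$, the corresponding functionals --- hence the curves --- must be linearly independent. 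Your argument instead uses the dual basis of line bundles $\mathcal{L}_C$ from \cref{cor:knusln} and pairs a putative relation $\sum_C\lambda_C[C]=0$ against each $\mathcal{L}_D$ to kill every coefficient; this is precisely the ``alternative proof'' the paper mentions in one sentence immediately after its own. Both are valid and non-circular (\cref{cor:knusln} and \cref{thm:charKnu} both precede the corollary). The trade-off: the paper's count needs only the $\mathfrak{sl}_2$, level~$1$ machinery already used for \cref{thm:charKnuQ}, while your route requires the higher-level $\mathfrak{sl}_m$ divisors constructed via \cref{lem:partition}; in exchange you get the stronger statement that $c$ is surjective (even integrally), not just that its kernel has the right codimension. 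Your direct pairing formulation is cleaner than the detour through \cref{thm:dual} and transposes, and you are right that for the $\Q$-statement claimed here the duality machinery is not needed at all.
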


\begin{proof}
    Consider the exact sequence in \cref{thm:charKnuQ} (1). Using \cref{thm:basis} and \cref{thm:propvac}, the image of $\pi_{n-1}^\ast + \pi_{n}^\ast$ is the $\Q$-vector space generated by $\mathbb{D}_{0,n}^2(a_\bullet)$, such that $a_{n-1}$ or $a_n = 0$. Hence, the codimension of the image of $\pi_{n-1}^\ast + \pi_{n}^\ast$ inside $\text{Pic}(\M{n})_\Q$ is $2^{n-3} - 1$. This coincides with the cardinality of $F{\text{Knu}}$. Each curve in $F_{\text{Knu}}$ defines a subspace of $\text{Pic}(\M{n})_\Q$ whose codimension of the total intersection is the same as the cardinality of $F_{\text{Knu}}$, indicating that $F_{\text{Knu}}$ is linearly independent.
\end{proof}

An alternative proof also follows from \cref{cor:knusln}. This result is analogous to \cite[Proposition 4.1]{AGSS12} and \cite[Theorem 2.1 (1)]{AGS14}, but it is stronger. While these theorems deal with $\lfloor\frac{n}{2}\rfloor-1$ symmetric F-curves, \cref{cor:Knuind} concerns $2^{n-3}-1$ ordinary F-curves. In the same way as the proof of \cref{cor:Knuind}, we can apply \cref{thm:chargen} to prove a statement about F-curves. 

\cref{cor:Knuind} suggests that the relative closed cone of curves $\NE{f_{\text{Knu}}}$ might be the simplicial cone generated by $F_{\text{Knu}}$. This turns out to be true.

\begin{prop}\label{prop:kkne}
    $\NE{f_{\text{Knu}}}$ is the simplicial cone generated by $F_{\text{Knu}}$.
\end{prop}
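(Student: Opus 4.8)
The plan is to prove the two inclusions between $\NE{f_{\text{Knu}}}$ and the cone $\sigma$ spanned by the F-curves in $F_{\text{Knu}}$, after first pinning down the linear span $A_1(f_{\text{Knu}})$ in which $\NE{f_{\text{Knu}}}$ sits. Recall that by definition $\NE{f_{\text{Knu}}}=\NE{\M{n}}\cap A_1(f_{\text{Knu}})$.

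First I would identify $A_1(f_{\text{Knu}})$ with the span $A_1(F_{\text{Knu}})$. Since $f_{\text{Knu}}$ is the morphism into $\M{n-1}\times_{\M{n-2}}\M{n-1}$ induced by $\pi_{n-1}$ and $\pi_n$, functoriality of pushforward gives $A_1(f_{\text{Knu}})\subseteq \ker(\pi_{n,\ast},\pi_{n-1,\ast})$. By the second (dual) exact sequence of \cref{thm:charKnu}, the map $c^\ast$ identifies $\Z^{F_{\text{Knu}}}$ with $\ker(\pi_{n,\ast},\pi_{n-1,\ast})$, and since $c^\ast$ realizes $F_{\text{Knu}}$ as $1$-cycles this kernel is exactly $A_1(F_{\text{Knu}})$. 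Conversely, the curves in $F_{\text{Knu}}$ are precisely the F-curves contracted by $f_{\text{Knu}}$, so $A_1(F_{\text{Knu}})\subseteq A_1(f_{\text{Knu}})$. Combining the inclusions yields $A_1(f_{\text{Knu}})=A_1(F_{\text{Knu}})$, whence $\NE{f_{\text{Knu}}}=\NE{\M{n}}\cap A_1(F_{\text{Knu}})$.

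The easy inclusion $\sigma\subseteq\NE{f_{\text{Knu}}}$ is then immediate: each $C\in F_{\text{Knu}}$ is an effective curve class lying in $A_1(F_{\text{Knu}})=A_1(f_{\text{Knu}})$. Moreover $\sigma$ is simplicial, since $F_{\text{Knu}}$ is linearly independent by \cref{cor:Knuind} and hence forms a basis of $A_1(F_{\text{Knu}})$. For the reverse inclusion I would use the base point free line bundles of \cref{cor:knusln} as a dual basis. Given $\gamma\in\NE{f_{\text{Knu}}}$, write $\gamma=\sum_{C\in F_{\text{Knu}}}a_C\,C$ in this basis. For each fixed $C$, \cref{cor:knusln} supplies a base point free—hence nef—line bundle $\mathcal{L}_C$ with $\mathcal{L}_C\cdot C=1$ and $\mathcal{L}_C\cdot D=0$ for all other $D\in F_{\text{Knu}}$. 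Pairing against $\gamma$ gives $a_C=\mathcal{L}_C\cdot\gamma$, which is nonnegative because $\mathcal{L}_C$ is nef and $\gamma\in\NE{\M{n}}$. Thus every $a_C\ge 0$, so $\gamma\in\sigma$, giving $\NE{f_{\text{Knu}}}\subseteq\sigma$ and hence equality.

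The substance of the argument is front-loaded into the inputs we may assume: the linear independence from \cref{cor:Knuind} and, above all, the existence in \cref{cor:knusln} of a nef dual basis to $F_{\text{Knu}}$. Once these are in hand the cone statement is formal, so the only genuine care needed is the bookkeeping in the first step — verifying that the ambient space $A_1(f_{\text{Knu}})$ is exactly cut out by $\pi_{n,\ast}$ and $\pi_{n-1,\ast}$ and coincides with $A_1(F_{\text{Knu}})$, rather than a strictly larger space that could enlarge the relative cone and break the dual-basis computation.
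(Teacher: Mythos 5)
Your proposal is correct and follows essentially the same route as the paper: use \cref{thm:charKnu} (together with \cref{cor:Knuind}) to place any class of $\NE{f_{\text{Knu}}}$ in the span of $F_{\text{Knu}}$, then pair against the nef dual basis from \cref{cor:knusln} to force nonnegative coefficients. The only difference is that you spell out the identification $A_1(f_{\text{Knu}})=A_1(F_{\text{Knu}})$ via the dual exact sequence, a bookkeeping step the paper leaves implicit.
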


\begin{proof}
    By \cref{thm:charKnu} and \cref{cor:Knuind}, any element of $\NE{f_{\text{Knu}}}$ is a linear combination of $F_{\text{Knu}}$. It is enough to prove that every coefficient is nonnegative. This directly follows from \cref{cor:knusln}.
\end{proof}

Note that it is hard to directly compute the cone $\NE{f_{\text{Knu}}}$, since we need to consider a limit of effective curve classes. However, the nef divisors given by coinvariant divisors in \cref{cor:knusln} allow us to work on the dual space and make the proof easier.

\cref{thm:chargenQ} allows us to compute the dimension of the space generated by $F_{S,T}$. Although the intersection pairing $c$ may not be surjective in this case—unlike in \cref{thm:charKnuQ}—the spaces $\text{Span }F_{S,T}$ and $\text{Im }c$ are dual to each other. Therefore, we can apply \cref{thm:chargenQ} (1) to obtain:

\begin{cor}\label{cor:fdimgen}
    \[ \text{dim}_\Q \text{ Span }F_{S,T}=\text{rank }\text{Pic}(\M{n})-\text{rank }\text{Pic}(\M{S})-\text{rank }\text{Pic}(\M{T})+\text{rank }\text{Pic}(\M{S\cap T}). \]
\end{cor}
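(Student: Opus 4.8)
The plan is to read off the result from the four-term exact sequence in \cref{thm:chargenQ}(1) by an alternating dimension count, and then to identify $\dim_\Q \text{Span }F_{S,T}$ with the rank of the intersection map $c$ using the perfect pairing between divisors and curves. Since both $\text{rank Pic}(\M{S})$-type quantities and $\dim_\Q \text{Span }F_{S,T}$ are insensitive to the difference between $\text{Pic}$ and numerical equivalence once we tensor with $\Q$, the whole argument lives on the $\Q$-vector space level.

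First I would compute $\dim_\Q \text{Im }c$. Write the exact sequence of \cref{thm:chargenQ}(1) as
\[ 0 \to \text{Pic}(\M{S\cap T})_\Q \xrightarrow{(\pi^{S\cap T,\ast}, -\pi^{S\cap T,\ast})} \text{Pic}(\M{S})_\Q \times \text{Pic}(\M{T})_\Q \xrightarrow{\pi^{S,\ast}+\pi^{T,\ast}} \text{Pic}(\M{n})_\Q \xrightarrow{c} \Q^{F_{S,T}}. \]
Exactness at the first two spots, together with injectivity on the left, gives
\[ \dim_\Q \ker c = \dim_\Q \text{Im}(\pi^{S,\ast}+\pi^{T,\ast}) = \text{rank Pic}(\M{S}) + \text{rank Pic}(\M{T}) - \text{rank Pic}(\M{S\cap T}). \]
Applying rank–nullity to $c$ then yields
\[ \dim_\Q \text{Im }c = \text{rank Pic}(\M{n}) - \text{rank Pic}(\M{S}) - \text{rank Pic}(\M{T}) + \text{rank Pic}(\M{S\cap T}), \]
which is exactly the right-hand side of the claimed formula.

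It remains to show $\dim_\Q \text{Span }F_{S,T} = \dim_\Q \text{Im }c$. Here I would invoke the perfectness of the intersection pairing $\text{Pic}(\M{n})_\Q \times A_1(\M{n})_\Q \to \Q$, which with $\Q$-coefficients and modulo numerical equivalence is immediate (it is the easy rational form of \cref{thm:dual}). Let $c^\ast \colon \Q^{F_{S,T}} \to A_1(\M{n})_\Q$ be the curve-realization map sending the basis vector $e_F$ to the class $[F]$, so that $\text{Im }c^\ast = \text{Span }F_{S,T}$. For $\mathcal{L} \in \text{Pic}(\M{n})_\Q$ and $v = \sum_{F} a_F e_F$ one computes $\mathcal{L} \cdot c^\ast(v) = \sum_F a_F (\mathcal{L}\cdot F)$, which is the standard pairing of $c(\mathcal{L}) = (\mathcal{L}\cdot F)_F$ with $v$. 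Thus $c^\ast$ is the transpose of $c$ under the perfect pairing and the standard pairing on $\Q^{F_{S,T}}$, and since a linear map and its transpose have equal rank, $\dim_\Q \text{Span }F_{S,T} = \dim_\Q \text{Im }c^\ast = \dim_\Q \text{Im }c$. I do not anticipate a genuine obstacle: the exactness is supplied by \cref{thm:chargenQ}(1), and the only point demanding care is the transpose identity, a formal consequence of perfectness of the $\Q$-valued intersection pairing; the remaining work is bookkeeping of the alternating dimensions along the sequence.
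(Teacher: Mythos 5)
Your proposal is correct and follows essentially the same route as the paper: the paper likewise reads off $\dim_\Q \operatorname{Im} c$ from the exact sequence of \cref{thm:chargenQ}(1) and identifies it with $\dim_\Q \operatorname{Span} F_{S,T}$ by the duality between $\operatorname{Span} F_{S,T}$ and $\operatorname{Im} c$ under the intersection pairing. Your write-up merely makes explicit the transpose argument that the paper leaves as a one-line remark.
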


Finally, using \cref{thm:charKap} and \cref{thm:chargen}, we can produce extremal rays of nef cone of $\M{n}$.

\begin{cor}\label{cor:psiext}
    $\psi_i$ generates an extremal ray of the cone of F-nef divisors of $\M{n}$. In particular, It generates an extremal ray of the nef cone of $\M{n}$.
\end{cor}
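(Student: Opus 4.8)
The statement is symmetric under the $S_n$-action permuting the marked points, which permutes the $\psi$-classes, so the plan is to fix an element of $S_n$ carrying $i$ to $n$ and reduce to showing that $\psi_n$ spans an extremal ray of the F-nef cone $\mathrm{Nef}_F(\M{n})$, the cone of classes meeting every F-curve nonnegatively. The entire argument then rests on two facts already in hand: the precise intersection numbers of $\psi_n$ with F-curves, and the characterization in \cref{thm:charKap} of $\psi_n$ as the unique class (up to scalar) vanishing on $F_{\text{Kap}}$.

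First I would record those intersection numbers. Every F-curve $F(I,J,K,L)$ either has the part containing $n$ of size $>1$, in which case it lies in $F_{\text{Kap}}$ and $\psi_n\cdot F=0$ by the restriction-triviality on $\Delta_{0,\{i,n\}}$ used in the proof of \cref{thm:charKap}, or it has $n$ in a singleton part, in which case $\psi_n\cdot F=1$ by \cite[Section 3]{AC09}. In particular $\psi_n\in\mathrm{Nef}_F(\M{n})$ and $\psi_n$ is strictly positive on at least one F-curve. Now suppose $\psi_n=D_1+D_2$ with $D_1,D_2\in\mathrm{Nef}_F(\M{n})$. For each $F\in F_{\text{Kap}}$ the relation $0=\psi_n\cdot F=D_1\cdot F+D_2\cdot F$ with both summands nonnegative forces $D_1\cdot F=D_2\cdot F=0$ for every $F\in F_{\text{Kap}}$. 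By \cref{thm:charKap} each $D_j$ is therefore a scalar multiple $c_j\psi_n$, and testing against an F-curve with $n$ in a singleton part gives $c_j=D_j\cdot F\ge 0$. Hence $D_1,D_2$ lie on the ray $\mathbb{R}_{\ge 0}\,\psi_n$, which is exactly the extremality assertion for $\mathrm{Nef}_F(\M{n})$.

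For the final clause, since $\psi_n$ is base point free (it defines $f_{\text{Kap}}$, and coinvariant divisors are globally generated by \cref{thm:glogen}) it is nef, and $\mathrm{Nef}(\M{n})\subseteq\mathrm{Nef}_F(\M{n})$; any decomposition of $\psi_n$ inside the smaller nef cone is a fortiori one inside the F-nef cone, so extremality in the larger cone descends to the smaller. The argument is short, and its only real content is the duality between a facet of the F-curve cone and an extremal ray of its dual: concretely, the fact (dually recorded in \cref{cor:curveKap}) that $F_{\text{Kap}}$ spans a codimension-one subspace with $\psi_n$ as its unique annihilator. Identifying the common-zero locus $\{D: D\cdot F=0\ \text{for all}\ F\in F_{\text{Kap}}\}$ with the single ray through $\psi_n$, which is precisely what \cref{thm:charKap} supplies, is the conceptual heart and the step I would take the most care over; everything else is bookkeeping with signs.
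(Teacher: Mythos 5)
Your proof is correct and follows essentially the same route as the paper, which simply cites \cref{thm:charKap} to conclude that $F_{\text{Kap}}$ characterizes $\psi_n$ as an extremal ray; your write-up fills in the standard decomposition argument ($\psi_n = D_1 + D_2$ forces each $D_j$ to vanish on $F_{\text{Kap}}$, hence to be a nonnegative multiple of $\psi_n$) and the descent from the F-nef cone to the nef cone, both of which are exactly the implicit content of the paper's one-line proof.
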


\begin{proof}
    By \cref{thm:charcont}, $F_{\text{Kap}}$ characterizes $\psi_n$ as an extremal ray. 
\end{proof}

\begin{cor}\label{cor:genext}
    Let $\pi^S:\M{n}\to \M{S}$ be the projection. If $\mathcal{L}$ generates an extremal ray of nef cone (resp. F-nef cone) of $\M{S}$, then $\pi^{S,\ast}\mathcal{L}$ also generates an extremal ray of nef cone (resp. F-nef cone) of $\M{n}$.
\end{cor}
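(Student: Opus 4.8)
The plan is to reduce to a single forgetful map and then descend (F-)nefness along it. Since $\pi^S$ is a composition of projections $\pi_i$ each forgetting one point, and since a composite of maps that carry extremal rays to extremal rays again carries extremal rays to extremal rays, it suffices to prove the statement for a single forgetful map $\pi_i:\M{n}\to\M{n-1}$. So I would show that if $\mathcal{L}$ generates an extremal ray of the nef cone (resp. F-nef cone) of $\M{n-1}$, then $\pi_i^\ast\mathcal{L}$ generates an extremal ray of the nef cone (resp. F-nef cone) of $\M{n}$.

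First I would record the elementary inputs. The pullback $\pi_i^\ast$ is injective because $\pi_i$ is a contraction, so $\pi_i^\ast\mathcal{L}\neq 0$; and $\pi_i^\ast\mathcal{L}$ lies in the relevant cone, since pullback preserves nefness and, for any F-curve $F$, $\pi_i^\ast\mathcal{L}\cdot F=\mathcal{L}\cdot\pi_{i,\ast}F\ge 0$, using that $\pi_{i,\ast}F$ is either $0$ or again an F-curve (the restriction of $\pi_i$ to an F-curve is either constant or an isomorphism of degree $1$). To prove extremality, suppose $\pi_i^\ast\mathcal{L}=A+B$ with $A,B$ nef (resp. F-nef). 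For each $F\in F_i$, the set of F-curves contracted by $\pi_i$, we have $\pi_i^\ast\mathcal{L}\cdot F=\mathcal{L}\cdot\pi_{i,\ast}F=0$ while $A\cdot F,\,B\cdot F\ge 0$, forcing $A\cdot F=B\cdot F=0$. By \cref{thm:charprojQ}, tensored with $\R$, a class intersecting every curve of $F_i$ trivially lies in the image of $\pi_i^\ast$; hence $A=\pi_i^\ast A'$ and $B=\pi_i^\ast B'$ for unique real classes $A',B'$ on $\M{n-1}$.

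It then remains to check that $A'$ and $B'$ lie in the corresponding cone, i.e.\ that (F-)nefness descends along $\pi_i$, and to conclude. For the nef case this is the standard fact that a surjective projective morphism reflects nefness: any curve $C'$ on $\M{n-1}$ is dominated by a curve $C$ on $\M{n}$, so $A\cdot C=d\,(A'\cdot C')$ with $d=\deg(C/C')>0$ and thus $A'\cdot C'\ge 0$. For the F-nef case, every F-curve $F'$ on $\M{n-1}$ lifts to an F-curve $F$ on $\M{n}$ with $\pi_{i,\ast}F=F'$ (adjoin the point $i$ to one of the four parts), whence $A'\cdot F'=\pi_i^\ast A'\cdot F=A\cdot F\ge 0$, and similarly for $B'$. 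Applying injectivity of $\pi_i^\ast$ to $\pi_i^\ast\mathcal{L}=\pi_i^\ast(A'+B')$ gives $\mathcal{L}=A'+B'$ with $A',B'$ in the cone; extremality of $\mathcal{L}$ then yields $A',B'\in\mathbb{R}_{\ge 0}\mathcal{L}$, and hence $A,B\in\mathbb{R}_{\ge 0}\pi_i^\ast\mathcal{L}$. This proves extremality of $\pi_i^\ast\mathcal{L}$, and iterating over the forgetful maps composing $\pi^S$ finishes the proof.

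I expect the main obstacle to be the two descent verifications, and specifically pinning down the behaviour of F-curves under a forgetful map: that $\pi_{i,\ast}$ sends an F-curve either to $0$ or to an F-curve of degree $1$, and that every F-curve on $\M{n-1}$ admits an F-curve lift. These facts are elementary but essential, since they drive both the step identifying $A,B$ as pullbacks via \cref{thm:charprojQ} and the descent of F-nefness; everything else is formal once they are in place.
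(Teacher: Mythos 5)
Your argument is correct, and it takes a genuinely different route from the paper's. The paper works with $\pi^S$ directly and with a dual characterization of extremality: it picks curves $C_1,\dots,C_r$ on $\M{S}$ cutting out the ray $\R_{\ge 0}\mathcal{L}$ inside the nef cone, lifts them to curves $D_i$ on $\M{n}$ with $\pi^S_\ast D_i=n_iC_i$, and then invokes \cref{thm:chargen} (with $|T|=3$) to conclude that a nef class orthogonal to $F_{S,T}$ and to the $D_i$ is a multiple of $\pi^{S,\ast}\mathcal{L}$. You instead use the intrinsic definition of an extremal ray (every decomposition $\pi^\ast\mathcal{L}=A+B$ inside the cone forces $A,B$ proportional to $\pi^\ast\mathcal{L}$), reduce to a single forgetful map, and apply \cref{thm:charprojQ} to recognize $A$ and $B$ as pullbacks before descending (F-)nefness. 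Both proofs hinge on the same key input — that classes killing the contracted F-curves are pullbacks — and your two descent verifications (that $\pi_{i,\ast}$ sends an F-curve to $0$ or to an F-curve of degree one, and that every F-curve downstairs lifts to one upstairs) are exactly the facts the paper also uses implicitly in its final sentence. What your route buys is that it never assumes the extremal ray of $\mathrm{Nef}(\M{S})$ is \emph{exposed} by finitely many curve classes; the paper's opening step quietly assumes this, which is automatic for the polyhedral F-nef cone but is not a formal property of extremal rays of a general closed cone. What the paper's route buys is brevity and a slightly stronger output: it exhibits an explicit finite set of curves cutting out the ray $\R_{\ge 0}\pi^{S,\ast}\mathcal{L}$, rather than only establishing extremality.
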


\begin{proof}
    First, assume that $\mathcal{L}$ generates an extremal ray of the nef cone of $\M{S}$. Then, there exist curves $C_1, \ldots, C_r$ on $\M{S}$ such that if a nef line bundle intersects trivially with $C_1, \ldots, C_r$, then it is a constant multiple of $\mathcal{L}$. There exist curves $D_1, \ldots, D_r$ on $\M{n}$ such that $\pi^S(D_i) = n_iC_i$ for each $i$ and a positive integer $n_i$. Therefore, if a nef line bundle on $\M{n}$ intersects trivially with both $F_{S,T}$ and $D_1,\cdots, D_r$, where $T$ is any subset with $|T| = 3$, by \cref{thm:chargen}, it is a constant multiple of $\pi^{S,\ast}\mathcal{L}$. Note that a line bundle on $\M{S}$ is nef if and only if its pullback to $\M{n}$ is nef. The statement for the F-nef cone can be proven similarly, given that any F-curve on $\M{S}$ is the image of an F-curve on $\M{n}$.
\end{proof}

\cref{cor:psiext} and \cref{cor:genext} collectively generate a lot of extremal rays of the nef cone of $\M{n}$.

\subsection{Mori Theory of \texorpdfstring{$\M{n}$}{TEXT} with respect to \texorpdfstring{$\NE{F_\text{K\MakeLowercase{nu}}}$}{TEXT}}\label{subsec:Mori}

According to \cref{prop:kkne}, the relative cone of curves $\NE{f_{\text{Knu}}}$ from Knudsen's construction is the simplicial cone generated by $F_{\text{Knu}}$. In this section, we provide the classification of contractions of $\M{n}$ whose relative cone of curves is contained in $\NE{F_\text{Knu}}$, or equivalently, the contractions which $f_{\text{Knu}}$ factors through.

\begin{defn}\label{defn:Knucont}
For any subset $A \subseteq F_{\text{Knu}}$, we define $f_A:\M{n}\to\overline{\mathrm{M}}_{0,n}^{\text{Knu}}(A)$ to be a contraction of $\M{n}$ whose relative closed cone of curves is exactly the simplicial cone generated by $A$.
\end{defn}

The uniqueness of such a variety $\overline{\mathrm{M}}_{0,n}^{\text{Knu}}(A)$ and contraction $f_A$ follows from \cite[Proposition 1.14(b)]{Deb01}, while the existence is established as part of the following theorem.

\begin{thm}\label{thm:secknumori}
    We can associate:
    \begin{description}
        \item[Object] For any subset $A\subseteq F_{\text{Knu}}$, a projective variety $\overline{\mathrm{M}}_{0,n}^{\text{Knu}}(A)$ and a contraction  
        \[ f_A:\M{n}\to\overline{\mathrm{M}}_{0,n}^{\text{Knu}}(A) \]
        as defined in \cref{defn:Knucont}.
        \item[Morphism] For any two subsets $A,B\subseteq F_{\text{Knu}}$ such that $A\subseteq B$, a birational contraction 
        \[f_{A,B}:\overline{\mathrm{M}}_{0,n}^{\text{Knu}}(A)\to \overline{\mathrm{M}}_{0,n}^{\text{Knu}}(B).\]
    \end{description}
    These associations satisfy:
    \begin{enumerate}
        \item $\overline{\mathrm{M}}_{0,n}^{\text{Knu}}(\emptyset)=\overline{\mathrm{M}}_{0,n}$, $\overline{\mathrm{M}}_{0,n}^{\text{Knu}}(F_{\text{Knu}})=\M{n}\times_{\M{n-1}}\M{n}$.
        \item $f_{A,B}$ is transitive, i.e. $f_{B,C}\circ f_{A,B}=f_{A,C}$.
        \item $f_{\emptyset, F_{\text{Knu}}}=f_{\text{Knu}}$.
        \item $\NE{f_{\emptyset, A}}$ is the simplicial cone generated by $A$, and $f_{\emptyset, A}$ is the unique contraction of $\overline{\mathrm{M}}_{0,n}$ with this property. In particular, $f_A=f_{\emptyset, A}$.
        \item $f_{\emptyset, A}^\ast\text{Pic}\left(\overline{\mathrm{M}}_{0,n}^{\text{Knu}}(A) \right)$ is the set of line bundles on $\overline{\mathrm{M}}_{0,n}$ which intersect with curves in $A$ trivially. In particular, $\text{Pic}\left(\overline{\mathrm{M}}_{0,n}^{\text{Knu}}(A)\right)$ is a free abelian group of rank $2^{n-1}-\binom{n}{2}-|A|-1$.
    \end{enumerate}
\end{thm}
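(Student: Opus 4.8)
The plan is to build everything inductively from $f_{\text{Knu}}$, using the exact sequence of \cref{thm:charKnu} as the algebraic engine and \cref{cor:knusln} as the geometric engine for producing contractions. First I would address existence of the objects $\overline{\mathrm{M}}_{0,n}^{\text{Knu}}(A)$ and the contractions $f_A$. The key observation is that for each single curve $C \in A$, \cref{cor:knusln} supplies a \emph{base point free} line bundle $\mathcal{L}_C$ with $\mathcal{L}_C \cdot C = 1$ and $\mathcal{L}_C \cdot D = 0$ for all other $D \in F_{\text{Knu}}$. Summing over $C \notin A$, the bundle $\mathcal{L}_A := \sum_{C \in F_{\text{Knu}} \setminus A} \mathcal{L}_C$ is base point free (being a sum of globally generated bundles), intersects every curve of $A$ trivially, and intersects every curve of $F_{\text{Knu}} \setminus A$ positively. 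Hence $\mathcal{L}_A$ is semiample and the associated morphism $f_A \colon \M{n} \to \overline{\mathrm{M}}_{0,n}^{\text{Knu}}(A)$ (the Stein factorization / image of the linear system) contracts precisely the curves in $\NE{A}$. Projectivity of $\overline{\mathrm{M}}_{0,n}^{\text{Knu}}(A)$ is automatic since it is the image of a projective variety under a morphism defined by a base point free bundle.

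Next I would verify property (4), which is the heart of the theorem, since it pins down $f_A = f_{\emptyset,A}$ uniquely. The containment $\NE{f_A} \subseteq \NE{A}$ follows because $\mathcal{L}_A$ is positive on every other generator of the simplicial cone $\NE{f_{\text{Knu}}} = \NE{F_{\text{Knu}}}$ (\cref{prop:kkne}); conversely each $C \in A$ is contracted by construction. Because $\NE{f_{\text{Knu}}}$ is simplicial with $F_{\text{Knu}}$ linearly independent (\cref{cor:Knuind}, \cref{prop:kkne}), the relative cone $\NE{f_A}$ is exactly the face $\NE{A}$, and no curve outside $\NE{A}$ lies in it. Uniqueness then follows from \cite[Proposition 1.14(b)]{Deb01}: any two contractions with the same relative cone of curves are canonically isomorphic. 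The morphisms $f_{A,B}$ for $A \subseteq B$ are produced by the same rigidity principle, since $\NE{f_A} = \NE{A} \subseteq \NE{B} = \NE{f_B}$ forces $f_B$ to factor through $f_A$; transitivity (2) and the identifications in (1) and (3) are then formal consequences, with $\overline{\mathrm{M}}_{0,n}^{\text{Knu}}(F_{\text{Knu}}) = \M{n} \times_{\M{n-1}} \M{n}$ recovered from the definition of $f_{\text{Knu}}$.

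Finally, property (5) follows directly from \cref{thm:charKnu}. Since $f_A$ is a contraction and its relative cone is $\NE{A}$, the rigidity of \cite[Proposition 1.14(b)]{Deb01} identifies $f_A^\ast \text{Pic}(\overline{\mathrm{M}}_{0,n}^{\text{Knu}}(A))$ with the subgroup of line bundles intersecting every curve of $A$ trivially; this is exactly the image of $c^{-1}$ on the coordinate subspace $\Z^{A} \subseteq \Z^{F_{\text{Knu}}}$ in the exact sequence of \cref{thm:charKnu}. The rank count $2^{n-1} - \binom{n}{2} - |A| - 1$ comes from subtracting $|A|$ from $\operatorname{rank}\text{Pic}(\M{n}) = 2^{n-1} - \binom{n}{2} - 1$, using that the surjection $c$ and the linear independence of $F_{\text{Knu}}$ make the quotient free of exactly that rank. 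Freeness of $\text{Pic}(\overline{\mathrm{M}}_{0,n}^{\text{Knu}}(A))$ follows because $f_A^\ast$ realizes it as a saturated (hence direct) summand of the free group $\text{Pic}(\M{n})$.

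I expect the main obstacle to be the rigorous justification that $\NE{f_A}$ is \emph{exactly} the face $\NE{A}$ rather than merely containing it — equivalently, that the semiample bundle $\mathcal{L}_A$ does not accidentally contract additional curves lying outside $\NE{f_{\text{Knu}}}$. This is where the simplicial structure and linear independence of $F_{\text{Knu}}$ (\cref{prop:kkne}, \cref{cor:Knuind}) are indispensable: they guarantee that $\mathcal{L}_A^\perp \cap \NE{f_{\text{Knu}}}$ is precisely the face spanned by $A$, and that this characterization lifts to the full cone $\NE{\M{n}}$ once one checks $\mathcal{L}_A$ restricts to a relatively ample class on the image. Controlling this requires knowing that $\NE{f_A} \subseteq \NE{f_{\text{Knu}}}$, which holds because $\mathcal{L}_A$ dominates a relatively ample bundle for $f_{\text{Knu}}$ up to the curves in $A$.
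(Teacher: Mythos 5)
Your overall architecture (produce the contraction from base point free bundles supplied by \cref{cor:knusln}, then get uniqueness, functoriality, and property (5) from rigidity together with \cref{thm:charKnu} and \cref{cor:Knuind}) matches the paper's. But there is a genuine gap in your construction of $f_A$: you define it by the single bundle $\mathcal{L}_A=\sum_{C\in F_{\text{Knu}}\setminus A}\mathcal{L}_C$, and this does not control $\NE{f_A}$. The bundles $\mathcal{L}_C$ of \cref{cor:knusln} are only normalized against curves in $F_{\text{Knu}}$; nothing prevents $\mathcal{L}_A$ from vanishing on effective curve classes outside $\NE{f_{\text{Knu}}}$, so the morphism attached to $\mathcal{L}_A$ may contract strictly more than the face $\NE{A}$. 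The failure is already visible in the extreme case $A=F_{\text{Knu}}$: your sum is empty, $\mathcal{L}_A$ is trivial, and the associated map contracts $\M{n}$ to a point rather than to $\M{n-1}\times_{\M{n-2}}\M{n-1}$, contradicting your own item (1). Your closing sentence — that $\mathcal{L}_A$ ``dominates a relatively ample bundle for $f_{\text{Knu}}$ up to the curves in $A$'' — is exactly the unproven assertion needed here, and it is not true as stated.

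The paper repairs this by never relying on $\mathcal{L}_A$ alone: it takes the product morphism $f_{\text{Knu}}\times\prod_{C\in A^c}\phi_C$ into $\left(\M{n-1}\times_{\M{n-2}}\M{n-1}\right)\times\prod_{C\in A^c}\mathbb{P}^{N_C}$ and Stein-factorizes its image. Equivalently, one should replace your $\mathcal{L}_A$ by $f_{\text{Knu}}^\ast H+\mathcal{L}_A$ for $H$ ample on $\M{n-1}\times_{\M{n-2}}\M{n-1}$. With the $f_{\text{Knu}}$ factor present, a curve is contracted only if it already lies in $\NE{f_{\text{Knu}}}$, which by \cref{prop:kkne} is the simplicial cone on $F_{\text{Knu}}$, and then the normalization $\mathcal{L}_C\cdot C=1$, $\mathcal{L}_C\cdot D=0$ cuts this down to exactly $\NE{A}$. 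Once you make this correction, the remainder of your argument goes through essentially as in the paper, with one smaller caveat: in (5), rigidity only gives the containment $f_A^\ast\mathrm{Pic}\subseteq A^\perp$; the reverse containment requires exhibiting generators of $A^\perp$ that visibly descend, namely $f_{\text{Knu}}^\ast\mathrm{Pic}=\ker c$ together with the classes $\mathcal{L}_C$ for $C\in A^c$, which is again why the construction must carry the $f_{\text{Knu}}$ factor and the individual $\phi_C$'s rather than only their sum.
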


This theorem relies on \cref{cor:knusln}. 

\begin{proof}
    First, we construct $\M{n}^{\text{Knu}}(A)$. For each $C \in F_{\text{Knu}}$, let $\mathcal{L}_C$ be a line bundle that satisfies the condition stated in \cref{cor:knusln}, and let $\phi_C: \M{n} \to \mathbb{P}^{N_C}$ be the morphism corresponding to $\mathcal{L}_C$. Consider the morphism 
    \[ f_{\text{Knu}}\times\prod_{C\in A^c}\phi_C:\M{n}\to \left(\M{n-1}\times_{\M{n-2}} \M{n-1} \right)\times\prod_{C\in A^c}\mathbb{P}^{N_C}. \]
    Let $X_A^0$ be the image of this map and $f_A^0:\M{n}\to X_A^0$ be the corresponding map. By taking the Stein factorization of $f_A^0$, we obtain a contraction $\M{n}\to X_A$. Define $\M{n}^{\text{Knu}}(A):=X_A$ and let $f_{\emptyset, A}$ be the map $\M{n}\to X_A=\M{n}^{\text{Knu}}(A)$. $\M{n}^{\text{Knu}}(A)$ is automatically a projective variety by the construction. Note that an element of $A$ is contracted by $f_{\text{Knu}}$ and $\phi_C$ for every $C\in A^c$. Hence, any curve in $A$ is contracted by $f_{\emptyset, A}$. This implies that the image of $f_{\emptyset, A}^\ast:\text{Pic}\left(\M{n}^{\text{Knu}}(A)\right)\to \text{Pic}\left(\M{n}\right)$ intersects trivially with every element of $A$. Note that, by the construction, the image of $f_{\emptyset, A}^\ast$ contains the images of $f_{\text{Knu}}$ and line bundles $\mathcal{L}_C$ for $C\in A^c$. Since $\mathcal{L}_C\cdot C=1$ and $\mathcal{L}_C\cdot D=0$ for other $D\in A$, \cref{thm:charKnu} (1) implies that the image of $f_{\text{Knu}}^\ast$ is exactly the set of line bundles on $\overline{\mathrm{M}}_{0,n}$ which trivially intersect with curves in $A$. With \cref{cor:Knuind}, this completes the proof of (5).

    By definition, $f_{\text{Knu}}$ factors through $f_{\emptyset, A}$. Hence, $\NE{f_{\emptyset, A}} \subseteq \NE{f_{\text{Knu}}}$, where the latter is the simplicial cone generated by $F_{\text{Knu}}$, according to \cref{prop:kkne}, and the former is an extremal face of the latter. Therefore, $\NE{f_{\emptyset, A}}$ is generated by the curves in $F_{\text{Knu}}$ that are contracted by $f_{\emptyset, A}$. This demonstrates the first assertion of (4). The second assertion of (4) follows from \cite[Proposition 1.14(b)]{Deb01}. In particular, this shows that our construction satisfies the conditions of \cref{defn:Knucont}.

    By the uniqueness of (4) and \cref{prop:kkne}, since $f_{\text{Knu}}$ is a contraction, it follows that $\overline{\mathrm{M}}_{0,n}^{\text{Knu}}(\emptyset)=\overline{\mathrm{M}}_{0,n}$, $\overline{\mathrm{M}}_{0,n}^{\text{Knu}}(F_{\text{Knu}})=\M{n}\times_{\M{n-1}}\M{n}$ and $f_{\emptyset, F_{\text{knu}}}=f_{\text{Knu}}$. This completes the proof of (1) and (3). According to (4) and \cite[Proposition 1.14]{Deb01}, for any $A\subseteq B$, there exists a unique morphism $f_{A,B}:\M{n}^{\text{Knu}}(A)\to \M{n}^{\text{Knu}}(B)$ that satisfies $f_{\emptyset, B}=f_{A,B}\circ f_{\emptyset, A}$. Since $f_{\emptyset, F_{\text{Knu}}}=f_{\text{Knu}}$ is a birational morphism, by $f_{\emptyset, F_{\text{Knu}}}=f_{A, F_{\text{Knu}}}\circ f_{\emptyset, A}$, $f_{\emptyset, A}$ is also birational. Moreover, because $f_{\emptyset, B}=f_{A,B}\circ f_{\emptyset, A}$, $f_{A,B}$ is birational for every $A\subseteq B$. Since $f_{\emptyset, A}$ is a contraction, $\M{n}^{\text{Knu}}(A)$ is normal. Therefore, $f_{A,B}$ is a birational morphism between normal varieties, hence it is also a contraction. Since $f_{A,C}$ and $f_{B,C} \circ f_{A,B}$ are two birational morphisms between $\M{n}^{\text{Knu}}(A)$ and $\M{n}^{\text{Knu}}(C)$ that coincide on $\rm{M}_{0,n}$, they are the same morphism. This completes the proof of (2).
\end{proof}

\begin{rmk}\label{rmk:proj}
   An essential part of the proof of \cref{thm:secknumori} is showing that $\M{n}^{\text{Knu}}(A)$ is projective. As we will see later, there are alternative constructions of $\M{n}^{\text{Knu}}(A)$ that do not require \cref{cor:knusln}. However, to prove projectivity, we use the use of coinvariant divisors. In \cref{eg:segre}, we will explore why proving the projectivity of contractions of $\M{n}$ is a nontrivial task. This underscores the utility of coinvariant divisors: they offer a vast family of base-point-free line bundles on $\M{n}$, whose intersections with F-curves can be readily computed.
\end{rmk}

\begin{eg}\label{eg:segre} (Non-projective contractions of $\M{6}$)
    In \cite{Mo15}, it is demonstrated that the Segre cubic $\mathcal{S}_3$ is a contraction of $\M{6}$. Indeed, $\mathcal{S}_3$ has 10 singular points, and $\M{6}$ can be obtained by blowing up all of them. Let $\rho: \M{6} \to \mathcal{S}_3$ be the contraction map. For each singular point, the exceptional locus of the blow-up is isomorphic to $\mathbb{P}^1 \times \mathbb{P}^1$ and corresponds to the boundary divisor $\Delta_{\left\{a,b,c\right\}}$ of $\M{6}$. In other words, $\mathcal{S}_3$ can be obtained by contracting every boundary divisor of the form $\Delta_{\left\{a,b,c\right\}}$ to a point.

    Hence, $\rho$ serves as a resolution of $\mathcal{S}_3$. However, there exist resolutions of $\mathcal{S}_3$ that are smaller than $\M{6}$, as explained in \cite{Fin87}. This phenomenon can be understood in terms of F-curves. Since $\M{6}$ is a log Fano variety and any F-curve generates an extremal ray of $\NE{\M{6}}$, it is possible to contract any F-curve on $\M{6}$. Let $p$ be a singular point of $\mathcal{S}_3$ corresponding to the boundary divisor $\Delta_{\left\{1,2,3\right\}}$. This divisor contains $F(\left\{1\right\},\left\{2\right\},\left\{3\right\},\left\{4,5,6\right\})$ and $F(\left\{4\right\},\left\{5\right\},\left\{6\right\},\left\{1,2,3\right\})$. Contracting one of them yields a smooth variety, with the fiber over $p$ being $\mathbb{P}^1$. Therefore, for each singular point of $\mathcal{S}_3$, there are two small resolutions, corresponding to two F-curves contained in the respective boundary divisor. These local resolutions can be glued together to obtain a small resolution of $\mathcal{S}_3$, which still acts as a contraction of $\M{6}$. Given the choice between two options for each point, there are a total of $2^{10} = 1024$ small resolutions of $\mathcal{S}_3$. These resolutions are classified in \cite{Fin87}. There are a total of $13$ isomorphism classes of these small resolutions, and $7$ of them are non-projective. Since all of them can be realized as contractions of $\M{6}$ through gluing, they constitute a large family of non-projective contractions of $\M{6}$. This demonstrates the non-triviality of establishing the projectivity of $\M{n}^{\text{Knu}}(A)$.
\end{eg}

There are plenty of examples of nonprojective contractions of $\M{n}$ in \cite[Section 11]{MSvAX18}.

We can pose natural questions stemming from \cref{thm:secknumori}.

\begin{qes}\label{qes:modular}
    Is it possible to find a modular description of $\overline{\mathrm{M}}_{0,n}^{\text{Knu}}(A)$?
\end{qes}

Recall that $\overline{\mathrm{M}}_{0,n}^{\text{Knu}}(A)$ is constructed in terms of $\mathfrak{sl}_n$ level $1$ coinvariant divisors.These coinvariant divisors are pullbacks of line bundles from Hassett's moduli space of weighted pointed stable curves $\overline{\rm{M}}_{0, \mathcal{A}}$, according to \cite[Proposition 4.7]{Fak12} and \cite[Theorem A]{AGSS12}, we expect that the modular description of $\overline{\mathrm{M}}_{0,n}^{\text{Knu}}(A)$ is related to $\overline{\rm{M}}_{0, \mathcal{A}}$. However, the naive approach does not work: $\M{n-1}\times_{\M{n-2}}\M{n-1}$  is Hassett's moduli space corresponding to the weight $(1,1,\cdots, 1,0,0)$, so the natural guess is $(1,1,\cdots, a_1,a_2)$, but  this merely yields $\overline{\mathrm{M}}_{0,n}$. 

Another significant source of contractions of $\M{n}$ is the modular compactification of Smyth \cite{Smy13}. We refer to \cite{MSvAX18} for a detailed description of modular compactifications for rational curves. Note that, by \cref{prop:strknumori}, $\overline{\mathrm{M}}_{0,n}^{\text{Knu}}(A)$ admits at worst finite quotient singularities, so it can be a coarse moduli space of a certain Deligne-Mumford moduli stack with nontrivial stabilizers.

\begin{eg}\label{eg:triple}
    Define $\text{T}^i_{0,n}$ as the limit of the following diagram:
    \[ \begin{tikzcd}
       \M{n-1}\arrow[d, "\pi_{n-1}"]\arrow[rd, near start, "\pi_n"] & \M{n-1}\arrow[ld,  near start, "\pi_{i}"]\arrow[rd,  near start, swap, "\pi_n"] & \M{n-1}\arrow[d, "\pi_{n-1}"]\arrow[ld,  near start, "\pi_{i}"] \\
       \M{n-2} & \M{n-2} & \M{n-2}.
    \end{tikzcd}\]
    where the second row corresponds to the moduli space of stable pointed rational curve with the index set $\left\{1,2,\cdots, n\right\}\setminus\left\{n-1, n\right\}$, $\left\{1,2,\cdots, n\right\}\setminus\left\{i, n\right\}$ and $\left\{1,2,\cdots, n\right\}\setminus\left\{i,n-1\right\}$. This construction forms a `triple fiber product' of projections. In other words,
    \[ \text{T}_{0,n}^i:=\left(\overline{\mathrm{M}}_{0,n-1}\times_{\overline{\mathrm{M}}_{0,n-2}} \overline{\mathrm{M}}_{0,n-1} \right)\times_{\overline{\mathrm{M}}_{0,n-2}\times \overline{\mathrm{M}}_{0,n-2}}\overline{\mathrm{M}}_{0,n-1}. \] 
    
    In naive terms, this can be considered as the image of $\M{n}\to \M{n-1}\times \M{n-1}\times \M{n-1}$. This admits a natural map $f_{T,i}:\overline{\mathrm{M}}_{0,n}\to \text{T}^i_{0,n}$. We will demonstrate that $\text{T}^i_{0,n}$ is normal, which implies that $f_{T,i}$ is a birational contraction. Note that $\M{n-1}\times_{\M{n-2}}\M{n-1}$ is a local complete intersection, as elucidated in the proof of \cref{thm:gencont}. Given that $\M{n-2}\times \M{n-2}$ is smooth, and following the same reasoning as in the proof of \cref{thm:gencont}, $\text{T}_{0,n}^i$ is regularly embedded as a closed subscheme of $\left(\overline{\mathrm{M}}_{0,n-1}\times_{\overline{\mathrm{M}}_{0,n-2}} \overline{\mathrm{M}}_{0,n-1} \right)\times\overline{\mathrm{M}}_{0,n-1}$. Since the latter is a local complete intersection, $\text{T}_{0,n}^i$ also qualifies as a local complete intersection scheme. In particular, it is Cohen-Macaulay. According to \cref{thm:strKnu}, $f_{T,i}$ is an isomorphism on $\M{n}\setminus \rm{M}_C$ where $C=F_{T,i}:=F(\left\{1,2,\cdots, n\right\}\setminus\left\{i, n-1,n\right\},\left\{i\right\},\left\{n-1\right\},\left\{n\right\})$. Note that this is the only F-curve contracted by $f_{T,i}$. Moreover, it is evident that $f_{T,i}$ contracts $\rm{M}_C$ into a subvariety of codimension $2$ within $\text{T}_{0,n}^i$, since it factors through $\M{n-2}$. Consequently, $\text{T}_{0,n}^i$ is smooth outside a codimension $2$ locus. In particular, it is regular in codimension $1$. Hence, by Serre's criterion for normality, $\text{T}_{0,n}^i$ is normal. According to \cref{thm:secknumori}(4),  $\overline{\mathrm{M}}_{0,n}^{\text{Knu}}(F_{T,i})=\text{T}^i_{0,n}$. Given that $\text{T}^i_{0,n}$ is defined via fiber products, it admits a modular description.

    $\text{T}_{0,n}^i$ possesses another interesting property. By \cref{thm:secknumori} (5), the codimension of the image $f_{T,i}^\ast$ in $\text{Pic}(\M{n})$ is $1$. However, the codimension of the image of the map $\text{Pic}(\M{n-1})\times\text{Pic}(\M{n-1})\times\text{Pic}(\M{n-1})\xrightarrow{\pi_{i}^\ast+\pi_{n-1}^\ast+\pi_n^\ast}\text{Pic}(\M{n})$ is $2^{n-4}$. This can be established using \cref{thm:basis}. Therefore, unlike the case of the fiber product in \cref{thm:chargen} (3), there are many line bundles on the triple fiber product that do not originate from its components. Hence, the triple fiber product is intrinsically more complicated than the fiber product.
\end{eg}

\begin{qes}\label{qes:smooth}
    How singular is $\overline{\mathrm{M}}_{0,n}^{\text{Knu}}(A)$? Is it $\mathbb{Q}$-factorial? Furthermore, is $f_{A,A\cup \left\{C\right\}}$ a blow-up along a regular embedding?
\end{qes}

We remark that $\M{n}^{\text{Knu}}(A)$ is generally singular since $\M{n}^{\text{Knu}}(F_{\text{Knu}})=\M{n-1}\times_{\M{n-2}}\M{n-1}$. As observed in \cite[Section 1]{Ke92}, this singularity is étale locally isomorphic to the product of an affine space and the cone over a singular quadric surface. In particular, it is generally not $\Q$-factorial. We will completely answer this question in \cref{prop:strknumori}. Indeed, the singularities of $\M{n}^{\text{Knu}}(A)$ cannot be worse than those of $\M{n-1}\times_{\M{n-2}}\M{n-1}$.

We will answer \cref{qes:smooth} by demonstrating an alternative construction of $\overline{\mathrm{M}}_{0,n}^{\text{Knu}}(A)$. \cref{thm:strKnu}, related to Knudsen's construction, may not be the original contributions of this paper. They represent relatively elementary observations regarding $f_{\text{Knu}}$ which may already be well-known to experts in this subject. We have decided to include this not only to provide a complete proof of the properties of $\overline{\mathrm{M}}_{0,n}^{\text{Knu}}(A)$ but also to contribute to the literature on $\M{n}$.

Let $C=F(\left\{n-1\right\}, \left\{n\right\}, I, J)$ be an F-curve contracted by $f_{\text{Knu}}$. If $|I|, |J|\ne 1$, define
\[ \rm{M}_C:=\M{\left\{n-1,n,\bullet, \circ \right\}}\times\M{I+\bullet}\times \M{J+\circ}\hookrightarrow \M{n},\ \rm{N}_C:=\M{\ast+\bullet+\circ}\times\M{I+\bullet}\times \M{J+\circ}\hookrightarrow\M{n-1}. \]
Here, the map is induced by the clutching morphism. If $I=\left\{i\right\}$, define
\[ \rm{M}_C:=\M{\left\{n-1,n,i, \circ \right\}}\times \M{J+\circ}\hookrightarrow \M{n},\ \rm{N}_C:=\M{i+\ast+\circ}\times\M{J+\circ}\hookrightarrow\M{n-1}. \]
The map is also the clutching map. We can define $\rm{M}_C,\rm{N}_C$ similarly when $|J|=1$. There is a natural projection from $\rm{M}_C\to \rm{N}_C$, where both $n$ and $n-1$ map to $\ast$. The fiber of this projection is exactly the F-curves of type $C$. In other words, $\rm{N}_C$ parametrizes F-curves of type $C$, and $\mathrm{M}_C$ is the union of curves that are obviously rationally equivalent to $C$. Through the construction, if we realize $\rm{N}_C$ as a closed subvariety of $\M{n-1}\times_{\M{n-2}}\M{n-1}$ by $\rm{N}_C\hookrightarrow\M{n-1}\xrightarrow{\Delta}\M{n-1}\times_{\M{n-2}}\M{n-1}$, the composition map $\rm{M}_C\hookrightarrow \M{n}\xrightarrow{f_{\text{Knu}}}\M{n-1}\times_{\M{n-2}}\M{n-1}$ factors through $\rm{N}_C$. Note that the $\rm{N}_C$'s for various $C\in F_{\text{Knu}}$ do not intersect in $\M{n-1}\times_{\M{n-2}}\M{n-1}$. Hence, we have the following diagram:
\begin{equation}\label{eqn:diag}
\begin{tikzcd}
     \coprod_{C\in F_{\text{Knu}}}\rm{M}_C\arrow[rr, hook]\arrow[d]& & \M{n}\arrow[d, "f_{\text{Knu}}"] \\
    \coprod_{C\in F_{\text{Knu}}}\rm{N}_C\arrow[r, hook]& \M{n-1}\arrow[r, "\Delta"] & \M{n-1}\times_{\M{n-2}}\M{n-1}. \\
\end{tikzcd}
\end{equation}

\begin{prop}\label{thm:strKnu}
\cref{eqn:diag} is a fiber product diagram. Furthermore, if we let
\[ U:=\left(\M{n-1}\times_{\M{n-2}}\M{n-1}\right)\setminus\left(\coprod_{C\in F_{\text{Knu}}}\rm{N}_C \right) \]
then $f_{\text{Knu}}$ is an isomorphism on $f_{\text{Knu}}^{-1}(U)$.
\end{prop}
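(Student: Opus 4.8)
The plan is to read off the fibres of $f_{\text{Knu}}$ from the modular description of the two projections and then feed the result into the normality of the target. Recall that $f_{\text{Knu}}(x)=(\pi_n(x),\pi_{n-1}(x))$, and that a point of $\M{n-1}\times_{\M{n-2}}\M{n-1}$ is a pair $(D_1,D_2)$ of stable pointed curves, $D_1$ carrying the labels $\{1,\dots,n-1\}$ and $D_2$ carrying $\{1,\dots,n-2,n\}$, which become isomorphic to a common curve $C_0\in\M{n-2}$ after forgetting the last label and stabilising. To reconstruct $f_{\text{Knu}}^{-1}(D_1,D_2)$ I would read off from $D_1$ the position of the marked point $n-1$ relative to $C_0$ and from $D_2$ the position of $n$, and then count the stable $n$-pointed curves over $C_0$ realising both positions.

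First I would carry out this fibre computation. If the two recorded positions are not the same node of $C_0$ — that is, if $(D_1,D_2)$ does not lie on the diagonal image of some $\mathrm{N}_C$ — then there is a unique realisation (when the positions coincide at a smooth point one inserts the rigid three-pointed bubble carrying $n-1$ and $n$), so $f_{\text{Knu}}^{-1}(D_1,D_2)$ is a single reduced point. If instead both points sit over a common node $\nu$ of $C_0$, write $C_0=C_I\cup_\nu C_J$; then $(D_1,D_2)=\Delta(D_1)$ with $D_1\in\mathrm{N}_C$ for $C=F(\{n-1\},\{n\},I,J)$, and a realisation amounts to a choice of cross-ratio of the four points $n-1,n,\bullet,\circ$ on a component inserted at $\nu$. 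Hence the fibre is the copy of $\M{\{n-1,n,\bullet,\circ\}}\cong\mathbb{P}^1$ obtained by fixing $C_I,C_J$, i.e.\ exactly the fibre of the projection $\mathrm{M}_C\to\mathrm{N}_C$ over $D_1$. This simultaneously identifies $f_{\text{Knu}}^{-1}\bigl(\coprod_C\Delta(\mathrm{N}_C)\bigr)$ set-theoretically with $\coprod_C\mathrm{M}_C$ and shows that $f_{\text{Knu}}$ restricted to $\mathrm{M}_C$ is $\Delta\circ(\mathrm{M}_C\to\mathrm{N}_C)$, giving the commutativity of \cref{eqn:diag}.

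For statement (b) I would then argue as follows. Over $U$ every fibre of $f_{\text{Knu}}$ is finite by the previous step; since $f_{\text{Knu}}$ is proper and birational and its target is normal (as established in the proof of \cref{thm:gencont}), Zariski's Main Theorem forces $f_{\text{Knu}}$ to be an isomorphism over the open set where the fibres are zero-dimensional, in particular over $U$. The same input shows that the locus where $f_{\text{Knu}}$ fails to be an isomorphism is exactly $\coprod_C\mathrm{M}_C$.

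Finally, for the fibre product assertion and the main obstacle: because the images $\Delta(\mathrm{N}_C)$ are pairwise disjoint in the target, their preimages $\mathrm{M}_C$ are pairwise disjoint in $\M{n}$, so $\coprod_C\mathrm{M}_C\hookrightarrow\M{n}$ is a closed immersion, and the compatibility from the first step yields by the universal property a closed immersion $g\colon\coprod_C\mathrm{M}_C\hookrightarrow P:=\M{n}\times_{(\M{n-1}\times_{\M{n-2}}\M{n-1})}\bigl(\coprod_C\mathrm{N}_C\bigr)$ which is bijective on points. A surjective closed immersion onto a reduced scheme is an isomorphism, so it remains only to check that $P$ is reduced, and this is the delicate part: $P$ is the scheme-theoretic preimage of $\coprod_C\Delta(\mathrm{N}_C)$ and a priori could carry nilpotents. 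I expect to dispatch it locally — along the divisorial strata ($C\in F_{\text{Knu}}^{1}$, where $\mathrm{N}_C$ lies in the smooth locus of the target) $f_{\text{Knu}}$ is locally the blow-up of a smooth codimension-two centre, with $P$ the reduced projectivised normal bundle, while along the remaining strata the target has ordinary conifold singularities $\{xy=uv\}$ and $f_{\text{Knu}}$ is locally the standard small resolution, whose scheme-theoretic fibre over the node is a reduced $\mathbb{P}^1$. Controlling this scheme structure uniformly — equivalently, verifying the identity of scheme structures directly through the universal-curve description rather than case by case — is the step that requires the most care.
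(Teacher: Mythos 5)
Your argument follows the same route as the paper: a pointwise analysis of the fibres of $f_{\text{Knu}}$ via the modular description (the paper phrases it as ``if $f_{\text{Knu}}(x)=f_{\text{Knu}}(y)$ then $x,y\in\mathrm{M}_C$ for some $C$'', you phrase it as computing $f_{\text{Knu}}^{-1}(D_1,D_2)$, but the case analysis is the same), followed by Zariski's Main Theorem applied to a proper birational map onto the normal target to get the isomorphism over $U$. Two small remarks. First, your gloss ``the two recorded positions are not the same node of $C_0$'' does not literally match ``$(D_1,D_2)\notin\coprod_C\Delta(\mathrm{N}_C)$'': when $|I|=1$ the locus $\mathrm{N}_C$ corresponds to both $n-1$ and $n$ landing at the same \emph{marked} point $i$ of $C_0$ (which also produces a $\mathbb{P}^1$-fibre, since the inserted bubble then carries four special points); the second description is the correct one, and your parenthetical about rigid three-pointed bubbles applies only to coincidence at a smooth \emph{unmarked} point. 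Second, on the scheme-theoretic fibre-product assertion you are more careful than the paper, which simply declares the computation on $\mathrm{M}_C$ ``direct''; your reduction to checking that $P$ is reduced is sound, and the local verification you sketch does close it (the exceptional divisor of a blow-up along a smooth codimension-two centre is the reduced projectivised normal bundle, and the scheme-theoretic fibre of the standard small resolution of $\{xy=uv\}$ over the vertex is a reduced $\mathbb{P}^1$, as one sees on the chart where the ideal $(x,y,u,v)$ pulls back to $(u,y)$), so this acknowledged loose end is completable exactly as you propose rather than a genuine obstruction.
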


\begin{proof}
    We begin by proving that if $x,y\in \M{n}$ are closed points satisfying $f_{\text{Knu}}(x)=f_{\text{Knu}}(y)$, then $x,y\in \rm{M}_C$ for some $C\in F_{\text{Knu}}$. Let $D,E$ be stable pointed rational curves corresponding to $x,y$ and $p_1,\cdots, p_n$ and $q_1,\cdots, q_n$ be the marked points on them. $f_{\text{Knu}}(x)=f_{\text{Knu}}(y)$ holds if and only if $\pi_n([D])=\pi_n([E])$ and $\pi_{n-1}([D])=\pi_{n-1}([E])$. 
    
    If $p_n$ and $p_{n-1}$ are not in the same irreducible component of $D$, then we can still recover $D$ from $\pi_{n}([D])$ and $\pi_{n-1}([D])$.This is because, regardless of the number of special points on the irreducible component of $p_{n-1}$ and $p_n$, $\pi_n([D])$ retains the location of $p_{n-1}$ and $\pi_{n-1}([D])$ retains the location of $p_n$. Therefore, they must lie on the same irreducible component $X$ of $D$. If $X$ contains at least three special point excluding $p_{n-1}$ and $p_n$, then $\pi_{n}([D])$ and $\pi_{n-1}([D])$ still identifies $p_{n-1}$ and $p_n$, given sufficient number of points. Consequently, $X$ can contain at most four special points. If there is exactly one special point, $r$, excluding $p_{n-1}$ and $p_n$ on $X$, then $r$ must be a node and also contained in another irreducible component $Y$ of $D$. Since $\pi_{n-1}([D])=\pi_{n-1}([E])$ and $\pi_n([D])=\pi_n([E])$, $E$ can only be derived by attaching an $n$th point on $\pi_{n-1}([D])$ which must be placed on $Y$. If $Y$ has more than three special points, then obtaining such an $E \neq D$ in this manner is impossible, due to the sufficient number of special points. Altogether, either $X$ contains four special points or $X,Y$ both contain three special points. In any case, $D$ is contained in $\rm{M}_C$ for some $C\in F_{\text{Knu}}$. 

    Now, the second assertion is evident since $f_{\text{Knu}}$ is a proper birational map between normal varieties, which is one-to-one on $f_{\text{Knu}}^{-1}(U)$. Indeed, the Zariski Main Theorem implies that over $f_{\text{Knu}}^{-1}(U)$, it is a finite morphism, and hence, due to normality and birationality, it is an isomorphism. The first assertion is also direct, as the computation of $f_{\text{Knu}}$ on $\mathrm{M}_C$ is straightforward.
\end{proof}

This theorem elucidates the structure of $f_{\text{Knu}}$. For instance, it yields the following stronger version of \cref{prop:kkne} for $f_{\text{Knu}}$:

\begin{cor}\label{cor:intKnucont}
If an integral curve $D$ on $\M{n}$ is contracted by $f_{\text{Knu}}$, then it is rationally equivalent to a curve in $F_{\text{Knu}}$.
\end{cor}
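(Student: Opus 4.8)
The plan is to read off \cref{cor:intKnucont} directly from the two assertions of \cref{thm:strKnu}. Let $D\subseteq\M{n}$ be an integral curve with $f_{\text{Knu}}(D)$ a single point. First I would show that $D$ cannot meet the isomorphism locus $f_{\text{Knu}}^{-1}(U)$: by the second assertion of \cref{thm:strKnu}, $f_{\text{Knu}}$ restricts to an isomorphism, in particular an injection, on $f_{\text{Knu}}^{-1}(U)$. If $D\cap f_{\text{Knu}}^{-1}(U)$ were nonempty it would be a dense open subscheme of the irreducible curve $D$, hence one-dimensional, and would map injectively into $\M{n-1}\times_{\M{n-2}}\M{n-1}$; its image would then be one-dimensional, contradicting the fact that $f_{\text{Knu}}(D)$ is a point. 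Thus $D$ lies in the closed complement $\M{n}\setminus f_{\text{Knu}}^{-1}(U)=f_{\text{Knu}}^{-1}\!\left(\coprod_{C}\rm{N}_C\right)$, which by the fiber product assertion of \cref{thm:strKnu} equals $\coprod_{C\in F_{\text{Knu}}}\rm{M}_C$. Since $D$ is irreducible and the $\rm{M}_C$ are pairwise disjoint, $D\subseteq\rm{M}_C$ for a single $C=F(\{n-1\},\{n\},I,J)\in F_{\text{Knu}}$.

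Next I would use the factorization recorded just before \cref{eqn:diag}: the restriction of $f_{\text{Knu}}$ to $\rm{M}_C$ factors as $\rm{M}_C\to\rm{N}_C\hookrightarrow\M{n-1}\xrightarrow{\Delta}\M{n-1}\times_{\M{n-2}}\M{n-1}$, where the last arrow is the (closed, hence injective) diagonal. Consequently, since $f_{\text{Knu}}(D)$ is a point, the image of $D$ under the projection $p_C\colon\rm{M}_C\to\rm{N}_C$ is a point as well, i.e. $D$ is contracted by $p_C$. Writing out the definitions of $\rm{M}_C$ and $\rm{N}_C$ (in the case $|I|,|J|\neq 1$, and likewise in the singleton cases), the only forgotten factor is the four-marked component $\M{\{n-1,n,\bullet,\circ\}}\cong\M{4}\cong\mathbb{P}^1$, so $p_C$ is a projection whose fibers are copies of $\mathbb{P}^1$, and these fibers are precisely the F-curves of type $C$. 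As $D$ is an integral curve contained in a single such fiber $\Phi\cong\mathbb{P}^1$, dimension reasons force $D=\Phi$ as reduced subschemes, so $D$ is itself an F-curve of type $C$.

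Finally, because the class of an F-curve $F(I,J,K,L)$ in $\text{A}_1(\M{n})$ is independent of the attaching curves chosen (\cref{subsec:fconj}), and because rational and numerical equivalence coincide on $\M{n}$ (as recorded in the proof of \cref{thm:dual}), we conclude that $D$ is rationally equivalent to $C\in F_{\text{Knu}}$. I do not expect a serious obstacle here, since the statement is essentially a corollary of \cref{thm:strKnu}; the only points demanding care are the opening step—arguing that a contracted integral curve cannot meet $f_{\text{Knu}}^{-1}(U)$—and the bookkeeping required to recognize the fibers of $p_C$ as F-curves uniformly across the $|I|=1$ and $|J|=1$ cases.
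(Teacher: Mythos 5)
Your proof is correct and follows essentially the same route as the paper: both use \cref{thm:strKnu} to locate $D$ inside a single $\mathrm{M}_C$ and identify it with a fiber of $\mathrm{M}_C\to\mathrm{N}_C$, i.e.\ an F-curve of type $C$. The only (harmless) difference is the final step: the paper concludes that any two fibers of $\mathrm{M}_C\to\mathrm{N}_C$ are rationally equivalent because the base $\mathrm{N}_C$ is rational, whereas you invoke the independence of the class of $F(I,J,K,L)$ from the attaching data together with Keel's identification of rational and numerical equivalence on $\M{n}$ — both justifications are valid.
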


\begin{proof}
\cref{thm:strKnu} implies that, in the context of \cref{eqn:diag}, the morphism $f_{\text{Knu}}$ contracts $\mathrm{M}_C$ precisely to $\mathrm{N}_C$ for each $C \in F_{\text{Knu}}$. Therefore, if an integral curve $D$ is contracted by $f_{\text{Knu}}$, it must be a fiber of $\mathrm{M}_C \to \mathrm{N}_C$ for some $C \in F_{\text{Knu}}$. Since $N_C$ is rational and both $C$ and $D$ are fibers of the same fibration, $D$ is rationally equivalent to $C$.
\end{proof}

One might guess that \cref{thm:strKnu} implies \cref{cor:Knuind} or \cref{thm:secknumori} since it shows that $f_{\text{Knu}}$ contracts disjoint boundary strata through projection. However, it is insufficient to imply either of them. \cref{eg:segre} demonstrates why it is not enough to prove \cref{thm:secknumori}: even though we can individually contract boundary strata, their gluing might not result in a projective variety. \cref{eg:hironaka} shows that \cref{thm:strKnu} cannot prove both of them.

\begin{eg}\label{eg:hironaka}(A variant of Hironaka's example)
    Let $C,D$ be two rational, smooth, closed subcurves of $\mathbb{P}^3$ that intersect exactly at two points $P,Q$. For example, we can let $C$ be a planar conic and $D$ be a line contained in the same plane. Let $X_0$ be the blow-up of $\mathbb{P}^3$ along $C$ and $D_0$ be the proper transform of $D$, with points $P'$ and $Q'$ corresponding to $P$ and $Q$, respectively. Let $X$ be the blow-up of $X_0$ along $D_0$ and let $F$ be the exceptional divisor. Since $F\to D_0$ is a flat family of curves, If we let $F_P$ and $F_Q$ be the fibers at $P'$ and $Q'$, we find that they are rationally equivalent subcurves of $X$.  Let $\pi:X\to \mathbb{P}^3$ be the natural projection, and let $E:=\pi^{-1}(C)$. Again, $E\to C$ is a flat family of curves. Hence, if we let $E_P$ and $E_Q$ be the fibers at $P$ and $Q$, respectively, then they are rationally equivalent subcurves. Note that $E_P$ (resp. $E_Q$) consists of two irreducible components, one of which is $F_P$ (resp. $F_Q$). Let $E_P'$ (resp. $E_Q'$) be the other component. Then $E_P=E_P'+F_P$, $E_Q=E_Q'+F_Q$. Hence, $E_P'$ and $E_Q'$ are disjoint, rationally equivalent, integral curves. 
    
    Let $Y$ be the blow-up of $\mathbb{P}^3$ along the union of $C$ and $D$. By the universal property of blow-up, there exists a natural morphism $f:X\to Y$. the map $f$ is an isomorphism on $X\setminus (E_P'\cup E_Q')$, and contracts $E_P'$ and $E_Q'$ respectively to points. Hence, the situation is exactly the same as $f_{\text{Knu}}$, but the disjoint fibers are rationally equivalent. More precisely, the rational equivalence of $E_P'$ and $E_Q'$ is given by the following: Start from $E_P'$. You first `borrow' $F_P$, so you can move through $E$. When you arrive at $Q$, you get $E_Q'$ and `win' $F_Q$, which you can `pay back' to $F$. \cref{cor:Knuind} demonstrates this method is impossible in $f_{\text{Knu}}$, hence it is not just a corollary of \cref{thm:strKnu}. In terms of projectivity, by gluing local blow-ups, we can contract only one of $E_P', E_Q'$. However, since they are rationally equivalent, such contraction cannot be projective. With \cref{eg:segre}, this demonstrates the non-triviality of \cref{thm:secknumori}.
\end{eg}

Now, we can examine the singularity of $\M{n}^{\text{Knu}}(A)$. Define $U$ as in \cref{thm:strKnu}. For $A\subseteq F_{\text{Knu}}$ let 
 \[ U_A:=\left(\M{n-1}\times_{\M{n-2}}\M{n-1}\right)\setminus\left(\coprod_{C\in A^c}\rm{N}_C \right) \text{ and }\M{n}(A):=U_A\coprod_{U}f_{\text{Knu}}^{-1}(U_{A^c}), \]
 i.e. the gluing of $U_A$ and $f_{\text{Knu}}^{-1}(U_{A^c})$ along $U$.

\begin{prop}\label{prop:strknumori}
    $\M{n}^{\text{Knu}}(A)=\M{n}(A)$. Hence, the singularity of $\M{n}^{\text{Knu}}(A)$ cannot be worse than that of $\M{n-1}\times_{\M{n-2}}\M{n-1}$. In particular, if we let
    \[F_{\text{Knu}}^{1}= \left\{F(I,J,K,L)\ |\ K=\left\{n\right\}, L=\left\{n-1\right\}, |I|\text{ or }|J|=1  \right\}\subseteq F_{\text{Knu}}, \]
    \begin{enumerate}
        \item $\overline{\mathrm{M}}_{0,n}^{\text{Knu}}(A)$ is a local complete intersection.
        \item $\overline{\mathrm{M}}_{0,n}^{\text{Knu}}(A)$ is smooth if and only if $A\subseteq F_{\text{Knu}}^{1}$. If $A\not\subseteq F_{\text{Knu}}^{1}$, then $\overline{\mathrm{M}}_{0,n}^{\text{Knu}}(A)$ is not even $\mathbb{Q}$-factorial.
        \item $f_{A, A\cup [C]}$ is a blow-up along a regular embedding (hence a divisorial contraction) if and only if $C\in F_{\text{Knu}}^{1}$. Otherwise, this is a small contraction.
    \end{enumerate}
\end{prop}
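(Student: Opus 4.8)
The plan is to prove everything from the single structural identification $\M{n}^{\text{Knu}}(A)=\M{n}(A)$, after which (1)--(3) reduce to reading off local geometry from the two building blocks: the smooth variety $\M{n}$ and the fiber product $\M{n-1}\times_{\M{n-2}}\M{n-1}$, whose singularities are described explicitly in \cite[Section 1]{Ke92}. First I would check that the gluing defining $\M{n}(A)$ is legitimate: both $U_A$ (open in $\M{n-1}\times_{\M{n-2}}\M{n-1}$) and $f_{\text{Knu}}^{-1}(U_{A^c})$ (open in $\M{n}$) contain $U$ as an open subset, the second via the isomorphism $f_{\text{Knu}}^{-1}(U)\xrightarrow{\sim}U$ of \cref{thm:strKnu} together with $U\subseteq U_{A^c}$. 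Because the $\rm{N}_C$ are pairwise disjoint and $A,A^c$ partition $F_{\text{Knu}}$, one has $U_A\cup U_{A^c}=\M{n-1}\times_{\M{n-2}}\M{n-1}$, hence $f_{\text{Knu}}^{-1}(U_A)\cup f_{\text{Knu}}^{-1}(U_{A^c})=\M{n}$.

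To identify the gluing with $\M{n}^{\text{Knu}}(A)$, I would exploit the factorization $f_{\text{Knu}}=f_{A,F_{\text{Knu}}}\circ f_{\emptyset,A}$ from \cref{thm:secknumori}. The map $f_{A,F_{\text{Knu}}}$ contracts precisely the curves of $A^c$, whose exceptional locus maps into $\coprod_{C\in A^c}\rm{N}_C$; so it is quasi-finite, hence (being proper birational between normal varieties, by ZMT) an isomorphism over $U_A$. Dually $f_{\emptyset,A}$ contracts only $A$, so it restricts to an isomorphism $f_{\text{Knu}}^{-1}(U_{A^c})\xrightarrow{\sim}f_{A,F_{\text{Knu}}}^{-1}(U_{A^c})$. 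These two opens cover $\M{n}^{\text{Knu}}(A)$, their intersection is $f_{A,F_{\text{Knu}}}^{-1}(U)$, and on it both identifications agree with $U\cong f_{\text{Knu}}^{-1}(U)$; this is exactly the gluing datum of $\M{n}(A)$, giving the asserted equality. The singularity statement is then immediate: $\M{n}(A)$ is covered by an open in the smooth $\M{n}$ and the open $U_A\subseteq\M{n-1}\times_{\M{n-2}}\M{n-1}$, so its singularities are no worse than those of the fiber product, which are étale-locally a product of affine space with the cone over $\mathbb{P}^1\times\mathbb{P}^1$. Part (1) follows because both local models are local complete intersections (recall $\M{n-1}\times_{\M{n-2}}\M{n-1}$ is lci from the proof of \cref{thm:gencont}).

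For (2) the key is to identify the singular locus of $\M{n-1}\times_{\M{n-2}}\M{n-1}$ — the locus where the fiber of the universal curve acquires a node at which both extra sections meet, where the local equation is $\{xy=zw\}$ — precisely with $\coprod_{C\notin F_{\text{Knu}}^1}\rm{N}_C$. Since $U_A$ contains $\rm{N}_C$ exactly for $C\in A$ and omits those for $C\in A^c$, the glued space meets a singular point iff $A$ contains some $C\notin F_{\text{Knu}}^1$; hence $\M{n}(A)$ is smooth iff $A\subseteq F_{\text{Knu}}^1$, and when $A\not\subseteq F_{\text{Knu}}^1$ the presence of a cone-over-$\mathbb{P}^1\times\mathbb{P}^1$ point (the standard non-$\mathbb{Q}$-factorial singularity) yields non-$\mathbb{Q}$-factoriality. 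For (3) I would analyze $f_{A,A\cup\{C\}}$ over $\rm{N}_C$, the only locus where it is not an isomorphism, via the clutching description of $\rm{M}_C\to\rm{N}_C$. A dimension count shows that when $C\in F_{\text{Knu}}^1$ (say $I=\{i\}$) the contracted locus $\rm{M}_C$ is a divisor over the smooth codimension-$2$ center $\rm{N}_C$, and $f_{A,A\cup\{C\}}$ is the blow-up along the regular embedding $\rm{N}_C$; when $C\notin F_{\text{Knu}}^1$ the locus $\rm{M}_C$ has codimension $2$, so the contraction is small.

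The main obstacle I anticipate is the bookkeeping in the identification step — making the two ``isomorphism over an open'' claims for $f_{A,F_{\text{Knu}}}$ and $f_{\emptyset,A}$ precise and verifying they assemble to exactly the gluing datum of $\M{n}(A)$, not merely a variety sharing the same open cover — together with pinning down the singular locus of $\M{n-1}\times_{\M{n-2}}\M{n-1}$ as $\coprod_{C\notin F_{\text{Knu}}^1}\rm{N}_C$ and confirming the blow-up claim in (3) through the explicit clutching maps.
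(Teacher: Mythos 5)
Your overall strategy coincides with the paper's: identify $\overline{\mathrm{M}}_{0,n}^{\text{Knu}}(A)$ with the glued space $\M{n}(A)$ and then read off (1)--(3) from the two local models. Your identification step is organized a little differently (you split $\overline{\mathrm{M}}_{0,n}^{\text{Knu}}(A)$ into the two opens $f_{A,F_{\text{Knu}}}^{-1}(U_A)$ and $f_{A,F_{\text{Knu}}}^{-1}(U_{A^c})$ via the factorization, whereas the paper builds a map $f'\colon \M{n}(A)\to \overline{\mathrm{M}}_{0,n}^{\text{Knu}}(A)$ by gluing and checks it is bijective on closed points), but both versions rest on the same point-set input — that $f_{\emptyset,A}$ collapses the $\mathbb{P}^1$-fibers of $\mathrm{M}_C\to \mathrm{N}_C$ exactly for $C\in A$ and is injective elsewhere — which the paper supplies in its first paragraph and which you correctly flag as the bookkeeping to be done. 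Parts (1) and (3) match the paper's argument.

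There is, however, a genuine gap in your proof of the second half of (2). You deduce non-$\mathbb{Q}$-factoriality from "the presence of a cone-over-$\mathbb{P}^1\times\mathbb{P}^1$ point (the standard non-$\mathbb{Q}$-factorial singularity)." But $\mathbb{Q}$-factoriality is not an étale-local or analytic-local property: the description in \cite[Section 1]{Ke92} is only an étale-local (or formal-local) isomorphism with $\mathbb{A}^k\times \mathrm{Cone}(\mathbb{P}^1\times\mathbb{P}^1)$, and a variety can be algebraically $\mathbb{Q}$-factorial (even factorial) at a point whose completed local ring is not a UFD — nodal quartic threefolds are the classical example. So the local model alone does not rule out $\mathbb{Q}$-factoriality of $\overline{\mathrm{M}}_{0,n}^{\text{Knu}}(A)$. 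The paper avoids this by a global argument: $f_{\emptyset,A}$ contracts only $|A\cap F_{\text{Knu}}^{1}|$ divisors, so $\mathrm{rank}\,\mathrm{A}_{n-4}$ drops by at most $|A\cap F_{\text{Knu}}^{1}|$, while \cref{thm:secknumori}(5) forces $\mathrm{rank}\,\mathrm{Pic}$ to drop by $|A|$; when $A\not\subseteq F_{\text{Knu}}^{1}$ these ranks disagree, which is incompatible with $\mathbb{Q}$-factoriality. Your gap is fixable without that count — your own codimension computation in (3) shows that for $C\in A\setminus F_{\text{Knu}}^{1}$ the morphism $\overline{\mathrm{M}}_{0,n}^{\text{Knu}}(A\setminus\{C\})\to \overline{\mathrm{M}}_{0,n}^{\text{Knu}}(A)$ is a non-trivial small contraction, and a proper birational morphism of normal varieties onto a $\mathbb{Q}$-factorial target has purely divisorial exceptional locus (\cite[Proposition 1.40]{Deb01} or the analogous statement in Kollár--Mori) — but some such global argument must be supplied; the local model by itself does not suffice.
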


\begin{proof}
    First, we prove that the image of $\rm{M}_C$ under $f_A:=f_{\emptyset, A}$ is $\rm{M}_C$ if $C\not\in A$ and $\rm{N}_C$ if $C\in A$. By \cref{thm:strKnu}, the image of $\rm{M}_C$ under $f_{\text{Knu}}$ is $\rm{N}_C$, thus the image must be at least as large as $\rm{N}_C$. Since $\rm{M}_C\simeq\rm{N}_C\times \mathbb{P}^1$, the image is $\rm{M}_C$ if the map does not contract the $\mathbb{P}^1$ part, and $\rm{N}_C$ if the map contracts it. Since the $\mathbb{P}^1$ part of $\rm{M}_C$ corresponds to $C$, so the image is $\rm{M}_C$ if $C\not\in A$ and $\rm{N}_C$ if $C\in A$. 

    Now we prove that $f=f_{A,F_{\text{Knu}}} : \M{n}^{\text{Knu}}(A) \to \M{n-1}\times_{\M{n-2}}\M{n-1}$ is an isomorphism on $f^{-1}(U_{A})$. Since $f|_{f^{-1}(U_{A})}$ is a proper birational map between normal varieties, it is enough to show that it is one-to-one on closed points. This is confirmed by the first paragraph. Hence, we have the induced isomorphism $f^{-1}:U_A\to f^{-1}(U_{A})$.

    The restriction of $f_{A}:\M{n}\to \M{n}^{\text{Knu}}(A)$ to $f_{\text{Knu}}^{-1}(U_{A^c})$ and $f^{-1}:U_A\to f^{-1}(U_{A})$ coincides on $U$, since both maps are isomorphisms on $U$. Hence, by gluing these maps, we obtain $f':\M{n}(A)\to \M{n}^{\text{Knu}}(A)$. It is enough to show that this is an isomorphism. For the same reason as above, it is enough to show that this is a one-to-one map on closed points. Again, this follows from the first paragraph. 

    By Step 1 of \cref{thm:gencont}, $\M{n-1}\times_{\M{n-2}}\M{n-1}$ is a local complete intersection. (1) follows from this. By Step 2 of \cref{thm:gencont}, $\M{n-1}\times_{\M{n-2}}\M{n-1}$ is smooth outside $\coprod_{C\not\in F_{\text{Knu}}^1}\rm{N}_C$. Moreover, by \cite[Introduction]{Ke92}, $\M{n-1}\times_{\M{n-2}}\M{n-1}$ is singular at $\coprod_{C\not\in F_{\text{Knu}}^1}\rm{N}_C$. Therefore, by the first assertion, $\M{n}^{\text{Knu}}(A)$ is smooth if and only if $A\subseteq F_{\text{Knu}}^1$.

    By \cref{thm:strKnu}, $\M{n}\to \M{n}^{\text{Knu}}(A)$ contracts $|A\cap F_{\text{Knu}}^1|$ divisors and does not affect other divisors. Therefore,  
    \[ \text{rank}_{\Q} \text{ A}_1\left(\M{n}^{\text{Knu}}(A) \right)\ge\text{rank}_{\Q} \text{ A}_1\left(\M{n} \right)- |A\cap F_{\text{Knu}}^1| \]
    By \cref{thm:secknumori} (5), 
    \[ \text{rank}_{\Q} \text{ Pic}\left(\M{n}^{\text{Knu}}(A) \right)\otimes\Q = \text{rank}_{\Q} \text{ Pic}\left(\M{n} \right)\otimes\Q- |A| \]
    Hence, if $A\not\subseteq F_{\text{Knu}}^1$, then $\M{n}^{\text{Knu}}(A)$ is not $\Q$-factorial. This completes the proof of (2).

    (3) For $C\in F_{\text{Knu}}$, the exceptional locus of $f_{A, A\cup [C]}$ is $\rm{M}_C$, and $f_{A, A\cup [C]}\left(\rm{M}_C\right)=\rm{N}_C$. Hence this is a small contraction if $C\not\in F_{\text{Knu}}^1$. Since $\rm{N}_C$ is contained in the smooth locus of $\M{n-1}\times_{\M{n-2}}\M{n-1}$ of $C\in F_{\text{Knu}}^1$, $f_{A, A\cup [C]}$ is a blow-up along $\rm{N}_C$ in this case, hence a blow-up along a regular embedding.
\end{proof}

This gives a fairly complete description of the singularities and morphisms between $\M{n}^{\text{Knu}}(A)$. In particular, among contractions of $\M{n}$, $\overline{\mathrm{M}}_{0,n}^{\text{Knu}}\left(F_{\text{Knu}}^{1}\right)$ is the minimal resolution of $\M{n-1}\times_{\M{n-2}}\M{n-1}$.  \cref{prop:strknumori} (3) may suggest constructing $\M{n}^{\text{Knu}}(A)$ via blow up of closed subschemes of $\M{n-1}\times_{\M{n-2}}\M{n-1}$. However, this seems challenging, since the construction of $\M{n}$ from $\M{n-1}\times_{\M{n-2}}\M{n-1}$ is not local. Recall that, in \cite[Definition 2.3]{Knu83}, Knudsen defined a coherent sheaf $\mathcal{K}$ on $X=\M{n-1}\times_{\M{n-2}}\M{n-1}$ by
\[ 0\to \mathcal{O}_X\to \mathcal{O}_X(\Delta)\times \mathcal{O}_X(s_1+\cdots+s_{n-2})\to \mathcal{K}\to 0\]
and proved $\text{Proj}(\text{Sym }\mathcal{K})=\M{n}$. Hence, we cannot distinguish $\rm{N}_C$'s in this construction. \cref{cor:knusln} plays a key role in constructing line bundles that distinguish $\rm{N}_C$'s and to construct $\M{n}^{\text{Knu}}(A)$ using them.

\section{Further Discussion}\label{sec:conclu}

\subsection{Advantage of coinvariant divisors}\label{subsec:nec}

Given that the main results of this paper are proved using coinvariant divisors, it is natural to ask what advantages this approach has over alternative methods. The essential part of the proofs of the main theorems involves solving a linear equation. In principle, we can solve them directly using a classical basis such as the one provided in \cite[Lemma 2]{GF03}. However, the corresponding linear algebra problem is highly nontrivial. This complexity originates from the instability of the basis under pullback. This instability introduces various relations between line bundles on $\M{n}$ into our linear equation. Therefore, although it is theoretically possible to prove these main theorems with a classical basis, it would be an enormous task if $n$ is large. The advantage of the basis given by coinvariant divisors is that the corresponding linear equations are reasonable, as we saw in \cref{lem:linalg} and the proof of \cref{thm:charKap}. This leads to relatively simple proofs of the main theorems, written in \cref{sec:char}.

For $f_{\text{Kap}}$ and $f_{\text{Keel}}$, an alternative method exists to prove the main theorem. Moreover, if the characteristic of the base field is $0$, \cref{thm:charcont} for $f_{\text{Knu}}$, which is weaker than \cref{thm:charKnu}, can be proved without using the theory of coinvariant divisors. First we will discuss the case of $f_{\text{Kap}}$ and $f_{\text{Keel}}$. We outline the proof, communicated to the author by N. Fakhruddin: Both Kapranov's and Keel's constructions are special cases of Hassett's moduli space of weighted pointed rational curves, by \cite[Section 6]{Has03}. Moreover, their weights are contained in $(0,1]$, unlike Knudsen's construction, so \cite[Proposition 4.6]{Fak12} applies to this case.

Despite this proof, we still believe that the proof using coinvariant divisors is meaningful. This proof only applies to iterative smooth blowdowns corresponds to an F-curve, while the coinvariant divisor method can be applied to a more general set of contractions, as described by \cref{thm:charcont} and the discussion in \cref{sec:intro}. In particular, the coinvariant divisor method is also applicable to non-birational contractions. Moreover, the coinvariant divisor method is of a very algebraic nature, providing an alternative perspective to the geometric proof outlined above.

We note as further evidence that the method using coinvariant divisors is generalizable. Using the theory of conformal blocks associated to vertex operator algebras \cite{DGT21, DGT22a, DGT22b}, in \cite{Cho25}, the author generalizes the result of this paper to $\overline{\mathrm{M}}_{1,n}$. The proof relies on an analogue of \cref{thm:basis} in genus $1$, involving coinvariant divisors arising from the discrete series Virasoro VOA $\mathrm{Vir}_{5,2}$. This suggests that coinvariant divisors have the potential to be applied in broader contexts.

If the characteristic of the base field is $0$, \cite[Lemma 3.2.5]{KMM87} indeed applies to $f_{\text{Knu}}$. Let $D=\sum_{T} \Delta_{T}$ where $T$ runs over subsets of $\left\{1,2,\cdots, n-2\right\}$ such that $|T|, |\left\{1,2,\cdots, n-2\right\}\setminus T|\ge 2$. Then $K_{\M{n}}+\epsilon D$ has negative intersection with every curve in $F_{\text{Knu}}$ for $\epsilon\in \Q_{>0}$. Therefore, by \cite[Theorem 3.2.1]{KMM87}, \cref{thm:charcont} holds for $f_{\text{Knu}}$. However, this method has three significant limitations. First, it is not at all helpful for actually computing the Picard group. We still need to solve linear equations to explicitly characterize line bundles from Knudsen's construction, which is the aim of \cref{thm:charKnu}. Second, it only applies when the characteristic is zero, as in positive characteristic, the contraction theorem is only conjectured to hold. Finally, this requires \cref{prop:kkne} as an input, which is nontrivial to prove. The strategy of \cref{rmk:charp} does not apply due to the first defect, i.e., the lack of an explicit description of the Picard group. Using this method, we can also prove \cref{thm:Knumori}, by using \cite[Theorem 3.2.1]{KMM87}. However, to prove this theorem, we need not only the characteristic zero condition but also \cref{prop:kkne} and \cref{thm:Knuind} as an input, which requires the use of coinvariant divisors. 

We further note that, there is a proof of \cref{thm:Knumori} that does not utilize the coinvariant divisors, as suggested in the proof of \cref{cor:knusln}. Indeed, we can construct the line bundles in \cref{cor:knusln} via the pullback of ample divisors on the GIT quotient $V_{d,k}\sslash_{\vec{c}}\text{SL}_{d+1}$. Using \cite[Theorem 2.1]{GG12} with a set of numbers provided by \cref{lem:partition} gives the desired base point free line bundles.

Nevertheless, the method of using coinvariant divisors has the advantage of being readily generalizable. We remark that constructing a new contraction of $\M{n}$ as a GIT quotient is a nontrivial task, and studying its interaction with $F$-curves is also laborious. To the best of the author's knowledge, only those coinvariant divisors associated with $\mathfrak{sl}_n$ at level $1$, or certain special cases of $\mathfrak{sl}_2$, can be constructed via geometric invariant theory \cite{Fak12,Gia13,GG12,AGS14}. In contrast, by \cref{thm:glogen}, all coinvariant divisors are base point free, and the computation of their intersection numbers with $F$-curves is relatively easy, thanks to \cref{thm:factor} and \cref{thm:propvac}. This leaves ample room for the application of coinvariant divisors to the study of the geometry of $\M{n}$.

\subsection{Applications}\label{subsec:app}

Our study of Knudsen's construction is partially motivated by the desire to develop an inductive method, similar to the inductive approach in \cite{Ke92}, for proving theorems on $\M{n}$. The induction based on Keel's construction has the advantage of consisting of iterative blow-ups of regularly embedded subschemes. Knudsen's construction lacks this feature, but it has a simpler structure, as described in \cref{thm:charKnu} and \cref{thm:secknumori}. We present an example theorem that can be proved using such an inductive method, which is an improvement on \cref{thm:basis}.

\begin{thm}\label{thm:CDint}
    \begin{enumerate}
        \item $\text{Pic}(\M{n})$ is generated by coinvariant divisors of type A and level 1.
        \item Coinvariant divisors of $\mathfrak{sl}_2$ and level 1 form a basis of $\text{Pic}(\M{n})\otimes \Z\left[\frac{1}{2}\right]$.
    \end{enumerate}
\end{thm}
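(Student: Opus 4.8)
The plan is to prove both statements by induction on $n$, using the exact sequence of \cref{thm:charKnu} attached to Knudsen's construction as the inductive engine and \cref{cor:knusln} (together with \cref{lem:linalg}) to control the ``new'' directions indexed by $F_{\text{Knu}}$. Throughout I use that, by \cref{thm:propvac}, the pullback of a type A level $1$ coinvariant divisor along a point-forgetting projection is again a type A level $1$ coinvariant divisor (with a $0$ weight inserted), so the property of being generated by such divisors is preserved by $\pi_{n-1}^\ast$ and $\pi_n^\ast$.

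For part (1) I induct on $n$, the base case being $\M{4}\cong\mathbb{P}^1$ with $\text{Pic}(\M{4})\cong\Z$ generated by $\mathbb{D}_{0,4}^2(1,1,1,1)$ (degree $1$ by \cref{thm:confcal}). For the step, consider the right-exact piece
\[\text{Pic}(\M{n-1})\times\text{Pic}(\M{n-1})\xrightarrow{\pi_{n-1}^\ast+\pi_n^\ast}\text{Pic}(\M{n})\xrightarrow{c}\Z^{F_{\text{Knu}}}\to 0\]
of \cref{thm:charKnu}. By \cref{cor:knusln}, each $C\in F_{\text{Knu}}$ admits a type A level $1$ coinvariant divisor $\mathcal{L}_C$ with $c(\mathcal{L}_C)=e_C$. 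Given any $L\in\text{Pic}(\M{n})$, the class $L-\sum_{C}(L\cdot C)\,\mathcal{L}_C$ lies in $\ker c=\text{Im}(\pi_{n-1}^\ast+\pi_n^\ast)$, which by the induction hypothesis is generated by type A level $1$ coinvariant divisors; hence so is $L$. Note that the $\mathcal{L}_C$ from \cref{cor:knusln} generally require levels $m>2$, which is consistent with part (2) holding only after inverting $2$.

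For part (2), I tensor the full exact sequence of \cref{thm:charKnu} with $\Z[1/2]$ (flat, so exactness is preserved) and induct on the statement that $\{\mathbb{D}_{0,n}^2(a_\bullet):a_\bullet\in A_n\}$ is a $\Z[1/2]$-basis of $\text{Pic}(\M{n})_{\Z[1/2]}$. By \cref{thm:basis} these classes are $\Q$-linearly independent and number $|A_n|=\text{rank}\,\text{Pic}(\M{n})$, so only generation must be checked. By \cref{thm:propvac} and the induction hypothesis, the divisors with $a_{n-1}=0$ or $a_n=0$ generate $\text{Im}(\pi_{n-1}^\ast+\pi_n^\ast)_{\Z[1/2]}=(\ker c)_{\Z[1/2]}$, so it remains to show that the residual divisors, those with $a_{n-1}=a_n=1$ (i.e.\ $a_\bullet\in A_n^{n-1,n}$), map under $c$ onto a $\Z[1/2]$-basis of $\Z[1/2]^{F_{\text{Knu}}}$. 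Since $|A_n^{n-1,n}|=|F_{\text{Knu}}|=2^{n-3}-1$, this amounts to showing that the square matrix $P=\bigl(\mathbb{D}_{0,n}^2(a_\bullet)\cdot C\bigr)_{a_\bullet\in A_n^{n-1,n},\,C\in F_{\text{Knu}}}$ is invertible over $\Z[1/2]$, equivalently $\det P=\pm 2^k$.

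The main obstacle is this determinant computation, and the key observation is that $P$ is exactly the matrix analyzed in \cref{lem:Qknu} with $s=n-1$, $t=n$: its commutative diagram identifies $c$ restricted to $A_n^{n-1,n}\times F_{\text{Knu}}$ with the evaluation pairing $k[G]\times\text{Hom}(G,\mathbb{F}_2)\to k[\mathbb{F}_2]$ of \cref{lem:linalg}, via \cref{thm:lattice} and \cref{thm:confcal}. Because those identities have integer coefficients, the identification descends to $k=\mathbb{F}_p$ for every odd prime $p$, and \cref{lem:linalg} (valid over any field of characteristic $\neq 2$) makes the pairing nondegenerate there; thus $P$ is invertible modulo every odd prime, forcing $\det P=\pm 2^k$. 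This is precisely where inverting $2$ becomes unavoidable: the evaluation pairing degenerates in characteristic $2$, which is why part (2) cannot hold integrally — a failure already visible for $\psi_n$ in \cref{rmk:inv2}.
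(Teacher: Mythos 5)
Your proposal is correct and follows essentially the same route as the paper: induction on $n$ through the exact sequence of \cref{thm:charKnu}, with \cref{cor:knusln} supplying the type A level $1$ classes hitting the standard generators of $\Z^{F_{\text{Knu}}}$ for part (1), and the mod-$p$ nondegeneracy of the pairing from \cref{lem:Qknu}/\cref{lem:linalg} handling the $F_{\text{Knu}}$-directions for part (2). Your reformulations (explicit subtraction of $\sum_C (L\cdot C)\mathcal{L}_C$ instead of a diagram chase, and ``$\det P=\pm 2^k$'' instead of ``the cokernel of $I\to\Z^{F_{\text{Knu}}}$ is a $2$-group'') are equivalent to the paper's argument.
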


\begin{proof}
    Let $X_n$ (resp. $X_n^2$) be the submodule of $\text{Pic}(\M{n})$ generated by type A (resp. $\mathfrak{sl}_2$) level $1$ coinvariant divisors.
    
    (1) The case $n=4$ follows from \cref{thm:confcal}. The induction step is implied by the following commutative diagram:
    \[\begin{tikzcd}
        X_{n-1}\times X_{n-1}\arrow[r, "\pi_{n-1}^\ast+\pi_{n}^\ast"]\arrow[d] & X_n\arrow[r, "c"]\arrow[d]& \Z^{F_{\text{Knu}}}\arrow[r]\arrow[d, "\text{id}"] & 0\\
        \text{Pic}(\M{n-1})\times \text{Pic}(\M{n-1})\arrow[r, "\pi_{n-1}^\ast+\pi_{n}^\ast"] & \text{Pic}(\M{n}) \arrow[r, "c"] & \Z^{F_{\text{Knu}}} \arrow[r] & 0
    \end{tikzcd}\]
      where every vertical map is an inclusion. Moreover, \cref{cor:knusln} implies that both $X_n\xrightarrow{c} \Z^{F_{\text{Knu}}}$ and $\text{Pic}(\M{n})\xrightarrow{c} \Z^{F_{\text{Knu}}}$ are surjective. Moreover, the first vertical map is also surjective by the induction hypothesis. Therefore, by a straightforward diagram chasing, $X_n\to \text{Pic}(\M{n})$ is surjective .

    (2) We use induction on $n$. The case $n=4$ also follows from \cref{thm:confcal}. Assume that the theorem holds for $n-1$ and consider the case for $n$. We have the following commutative diagram    
    \[\begin{tikzcd}
        X_{n-1}^2\times X_{n-1}^2\arrow[r, "\pi_{n-1}^\ast+\pi_{n}^\ast"]\arrow[d] & X_n^2\arrow[r, "c"]\arrow[d]& I\arrow[r]\arrow[d] & 0\\
        \text{Pic}(\M{n-1})\times \text{Pic}(\M{n-1})\arrow[r, "\pi_{n-1}^\ast+\pi_{n}^\ast"] & \text{Pic}(\M{n}) \arrow[r, "c"] & \Z^{F_{\text{Knu}}} \arrow[r] & 0
    \end{tikzcd}\]
    where $I$ is the image of $X_n^2\to \text{Pic}(\M{n})\xrightarrow{c} \Z^{F_{\text{Knu}}}$. This is well-defined for the same reason as (1). By the induction hypothesis, the cokernel of $X_{n-1}^2\times X_{n-1}^2\to \text{Pic}(\M{n-1})\times \text{Pic}(\M{n-1})$ is a $2$-group. Thus, it suffices to prove that the cokernel of $I\to \Z^{F_{\text{Knu}}}$ is a $2$-group.

    To prove this, it suffices to show that the sequence
    \[ (X_{n-1}^2\times X_{n-1}^2)\otimes \Z/p\Z\to X_n^2\otimes \Z/p\Z\to (\Z/p\Z)^{F_{\text{Knu}}} \to 0 \]
    is exact, which represents the `mod $p$' version of the first row, for every odd prime number $p$. According to \cref{thm:propvac} and \cref{thm:basis}, the cokernel of $(X_{n-1}^2\times X_{n-1}^2)\otimes \Z/p\Z\to X_n^2\otimes \Z/p\Z$ is a $2^{n-3}$-dimensional $\Z/p\Z$-vector space. Hence, it is enough to prove the exactness at $X_n^2\otimes \Z/p\Z$. This can be established as in the proof of \cref{lem:Qknu}, using \cref{lem:linalg} for $k=\Z/p\Z$.
\end{proof}

 This proof reveals that the exact sequence of \cref{thm:charKnu} can be utilized in the form of an induction argument. Especially, the surjectivity of $c$ and the injectivity of $c^\ast$ explained in \cref{thm:charKnu} are crucial ingredients of the proof. This is the distinguished feature of the exact sequence associated with Knudsen's construction. For example, we have a similar exact sequence for projection maps in \cref{thm:charproj}, but in this case $\text{Ker }c^\ast$ is hard to explicitly describe, which makes it difficult to apply to the induction as in the proof of \cref{thm:CDint}. Given the effectiveness of \cref{thm:charKnu} in establishing \cref{thm:CDint}, we expect that this method can be applied to various circumstances regarding $\M{n}$.

\printbibliography

@book{Deb01,
  title={Higher-Dimensional Algebraic Geometry},
  author={Debarre, Oliver},
  isbn={9780387952277},
  lccn={2001020053},
  series={Universitext},
  year={2001},
  publisher={Springer New York}
}

@article{Fin87,
title = {Small resolutions of the Segre cubic},
journal = {Indagationes Mathematicae (Proceedings)},
volume = {90},
number = {3},
pages = {261-277},
year = {1987},
issn = {1385-7258},
doi = {https://doi.org/10.1016/1385-7258(87)90015-1},
author = {Hans Finkelnberg},
}

@article{HK00,
author = {Yi Hu and Sean Keel},
title = {{Mori dream spaces and GIT.}},
volume = {48},
journal = {Michigan Mathematical Journal},
number = {1},
publisher = {University of Michigan, Department of Mathematics},
pages = {331 -- 348},
year = {2000},
doi = {10.1307/mmj/1030132722},
URL = {https://doi.org/10.1307/mmj/1030132722}
}

@article{AC09,
  title={Divisors in the moduli spaces of curves},
  author={Arbarello, Enrico and Cornalba, Maurizio},
  journal={Surveys in Differential Geometry},
  volume={14},
  number={1},
  pages={1--22},
  year={2009},
  publisher={International Press of Boston}
}

@article{CT15,
author = {Ana-Maria Castravet and Jenia Tevelev},
title = {{$\overline {\rm{M}}_{0,n}$ is not a Mori dream space}},
volume = {164},
journal = {Duke Mathematical Journal},
number = {8},
publisher = {Duke University Press},
pages = {1641 -- 1667},
year = {2015},
doi = {10.1215/00127094-3119846},
URL = {https://doi.org/10.1215/00127094-3119846}
}

@unpublished{Fe15,
      title={Semiampleness criteria for divisors on $\overline{\rm{M}}_{0,n}$}, 
      author={Maksym Fedorchuk},
      year={2015},
      eprint={1407.7839},
      archivePrefix={arXiv},
      primaryClass={math.AG},
    note={}
}

@unpublished{Fe20,
      title={Symmetric F-conjecture for $g\leq 35$}, 
      author={Maksym Fedorchuk},
      year={2020},
      eprint={2007.13457},
      archivePrefix={arXiv},
      primaryClass={math.AG},
    note={}
}

@article {GF03,
    AUTHOR = {Farkas, Gavril and Gibney, Angela},
     TITLE = {The {M}ori cones of moduli spaces of pointed curves of small
              genus},
   JOURNAL = {Trans. Amer. Math. Soc.},
  FJOURNAL = {Transactions of the American Mathematical Society},
    VOLUME = {355},
      YEAR = {2003},
    NUMBER = {3},
     PAGES = {1183--1199},
      ISSN = {0002-9947,1088-6850},
   MRCLASS = {14H10 (14C25 14D06)},
  MRNUMBER = {1938752},
MRREVIEWER = {Dan\ Avritzer},
       DOI = {10.1090/S0002-9947-02-03165-3},
       URL = {https://doi.org/10.1090/S0002-9947-02-03165-3},
}

@article {Gib09,
    AUTHOR = {Gibney, Angela},
     TITLE = {Numerical criteria for divisors on {$\overline{\rm{M}}_g$} to be
              ample},
   JOURNAL = {Compos. Math.},
  FJOURNAL = {Compositio Mathematica},
    VOLUME = {145},
      YEAR = {2009},
    NUMBER = {5},
     PAGES = {1227--1248},
      ISSN = {0010-437X,1570-5846},
   MRCLASS = {14H10 (14E30)},
  MRNUMBER = {2551995},
MRREVIEWER = {Andrei\ B.\ Bogatyr\"{e}v},
       DOI = {10.1112/S0010437X09004047},
       URL = {https://doi.org/10.1112/S0010437X09004047},
}

@article{GKM02,
  title={Towards the ample cone of $\overline{\rm{M}}_{g,n}$},
  author={Gibney, Angela and Keel, Sean and Morrison, Ian},
  journal={Journal of the American Mathematical Society},
  volume={15},
  number={2},
  pages={273--294},
  year={2002}
}

@article{Has03,
title = {Moduli spaces of weighted pointed stable curves},
journal = {Advances in Mathematics},
volume = {173},
number = {2},
pages = {316-352},
year = {2003},
issn = {0001-8708},
doi = {https://doi.org/10.1016/S0001-8708(02)00058-0},
url = {https://www.sciencedirect.com/science/article/pii/S0001870802000580},
author = {Brendan Hassett},
keywords = {Moduli spaces of pointed curves, Log minimal model program},
}

@article{Ka93,
  title={Veronese curves and Grothendieck-Knudsen moduli space $\overline{\rm{M}}_{0,n}$},
  author={Kapranov, Mikhail M.},
  journal={J. Algebraic Geom},
  volume={2},
  number={2},
  pages={239--262},
  year={1993}
}

@unpublished{Ka92,
      title={Chow quotients of Grassmannian I}, 
      author={Mikhail M. Kapranov},
      year={1992},
      eprint={alg-geom/9210002},
      archivePrefix={arXiv},
      primaryClass={alg-geom}
}

@article{Ke92,
  title={Intersection theory of moduli space of stable N-pointed curves of genus zero},
  author={Sean Keel},
  journal={Transactions of the American Mathematical Society},
  year={1992},
  volume={330},
  pages={545-574},
}

@incollection {KM13,
    AUTHOR = {Keel, Sean and McKernan, James},
     TITLE = {Contractible extremal rays on {$\overline{\rm{M}}_{0,n}$}},
 BOOKTITLE = {Handbook of moduli. {V}ol. {II}},
    SERIES = {Adv. Lect. Math. (ALM)},
    VOLUME = {25},
     PAGES = {115--130},
 PUBLISHER = {Int. Press, Somerville, MA},
      YEAR = {2013},
      ISBN = {978-1-57146-258-9},
   MRCLASS = {14H10 (14E30)},
  MRNUMBER = {3184175},
MRREVIEWER = {Alex\ Massarenti},
}

@incollection{KMM87,
  title={Introduction to the minimal model problem},
  author={Kawamata, Yujiro and Matsuda, Katsumi and Matsuki, Kenji},
  booktitle={Algebraic geometry, Sendai, 1985},
  volume={10},
  pages={283--361},
  year={1987},
  publisher={Mathematical Society of Japan}
}

@article {Knu83,
    AUTHOR = {Knudsen, Finn F.},
     TITLE = {The projectivity of the moduli space of stable curves. {II}.
              {T}he stacks {$\rm{M}_{g,n}$}},
   JOURNAL = {Math. Scand.},
  FJOURNAL = {Mathematica Scandinavica},
    VOLUME = {52},
      YEAR = {1983},
    NUMBER = {2},
     PAGES = {161--199},
      ISSN = {0025-5521,1903-1807},
   MRCLASS = {14H10 (14D20 14D22)},
  MRNUMBER = {702953},
MRREVIEWER = {P.\ E.\ Newstead},
       URL = {https://doi.org/10.7146/math.scand.a-12001},
}

@article {Mo15,
    AUTHOR = {Moon, Han-Bom},
     TITLE = {Mori's program for {$\overline{\rm{M}}_{0,6}$} with symmetric
              divisors},
   JOURNAL = {Math. Nachr.},
  FJOURNAL = {Mathematische Nachrichten},
    VOLUME = {288},
      YEAR = {2015},
    NUMBER = {7},
     PAGES = {824--836},
      ISSN = {0025-584X,1522-2616},
   MRCLASS = {14E30 (14H10)},
  MRNUMBER = {3345106},
       DOI = {10.1002/mana.201300289},
       URL = {https://doi.org/10.1002/mana.201300289},
}

@article{MS19,
title = {On the $S_n$-invariant F-conjecture},
journal = {Journal of Algebra},
volume = {517},
pages = {439-456},
year = {2019},
issn = {0021-8693},
url = {https://www.sciencedirect.com/science/article/pii/S0021869318305180},
author = {Han-Bom Moon and David Swinarski},
keywords = {Moduli space, Rational curves, Birational geometry, Nef cone, Base point free divisor}
}

@article{MSvAX18,
  title={Birational contractions of $\overline{\rm{M}}_{0,n}$ and combinatorics of extremal assignments},
  author={Moon, Han-Bom and Summers, Charles and von Albade, James and Xie, Ranze},
  journal={Journal of Algebraic Combinatorics},
  volume={47},
  pages={51--90},
  year={2018},
  publisher={Springer}
}

@article{Smy13,
  title={Towards a classification of modular compactifications of $\mathcal{M}_{g,n}$},
  author={Smyth, David Ishii},
  journal={Inventiones mathematicae},
  volume={192},
  number={2},
  pages={459--503},
  year={2013},
  publisher={Springer}
}

@article{AGS14, title={Higher-Level $\mathfrak{sl}_2$ Conformal Blocks Divisors on $\overline{\rm{M}}_{0,n}$}, volume={57}, DOI={10.1017/S0013091513000941}, number={1}, journal={Proceedings of the Edinburgh Mathematical Society}, author={Alexeev, Valery and Gibney, Angela and Swinarski, David}, year={2014}, pages={7–30}}

@article {AGSS12,
    AUTHOR = {Arap, Maxim and Gibney, Angela and Stankewicz, James and
              Swinarski, David},
     TITLE = {{$\mathfrak{sl}_n$} level 1 conformal blocks divisors on {$\overline{\rm{M}}_{0,n}$}},
   JOURNAL = {Int. Math. Res. Not. IMRN},
  FJOURNAL = {International Mathematics Research Notices. IMRN},
      YEAR = {2012},
    NUMBER = {7},
     PAGES = {1634--1680},
      ISSN = {1073-7928,1687-0247},
   MRCLASS = {14H10 (14C20 14D20)},
  MRNUMBER = {2913186},
MRREVIEWER = {Montserrat\ Teixidor i Bigas},
       DOI = {10.1093/imrn/rnr064},
       URL = {https://doi.org/10.1093/imrn/rnr064},
}

@unpublished{BFM91,
      title={Introduction to Algebraic Field Theory on Curves}, 
      author={Alexander Beilinson and Boris Feigin and Barry Mazur},
      year={1991},
      note={unpublished}
}

@article {BGM15,
    AUTHOR = {Belkale, Prakash and Gibney, Angela and Mukhopadhyay, Swarnava},
     TITLE = {Vanishing and identities of conformal blocks divisors},
   JOURNAL = {Algebr. Geom.},
  FJOURNAL = {Algebraic Geometry},
    VOLUME = {2},
      YEAR = {2015},
    NUMBER = {1},
     PAGES = {62--90},
   MRCLASS = {14H10 (14H60 14N35)},
  MRNUMBER = {3322198},
MRREVIEWER = {Felix\ Janda},

}

@article {BGM16,
    AUTHOR = {Belkale, Prakash and Gibney, Angela and Mukhopadhyay, Swarnava},
     TITLE = {Nonvanishing of conformal blocks divisors on {$\overline{\rm{M}}_{0,n}$}},
   JOURNAL = {Transform. Groups},
  FJOURNAL = {Transformation Groups},
    VOLUME = {21},
      YEAR = {2016},
    NUMBER = {2},
     PAGES = {329--353},
      ISSN = {1083-4362,1531-586X},
   MRCLASS = {14H10 (14H60 17B20)},
  MRNUMBER = {3492039},
MRREVIEWER = {Arvid\ Siqveland},
       DOI = {10.1007/s00031-015-9357-2},
       URL = {https://doi.org/10.1007/s00031-015-9357-2},
}

@unpublished{Cho25,
      title={Conformal Block Divisors for Discrete Series Virasoro VOA $\text{Vir}_{2k+1,2}$}, 
      author={Daebeom Choi},
      year={2025},
      eprint={2502.21270},
      archivePrefix={arXiv},
      primaryClass={math.AG},
      url={https://arxiv.org/abs/2502.21270}, 
    note={}
}

@article{DGT21,
author = {Chiara Damiolini and Angela Gibney and Nicola Tarasca},
title = {{Conformal blocks from vertex algebras and their connections on $\overline{\mathcal{M}}_{g,n}$}},
volume = {25},
journal = {Geometry \& Topology},
number = {5},
publisher = {MSP},
pages = {2235 -- 2286},
keywords = {Chern classes of vector bundles on moduli of curves, conformal blocks and coinvariants, connections and Atiyah algebras, sheaves on moduli of curves, vertex algebras},
year = {2021},
URL = {https://doi.org/10.2140/gt.2021.25.2235}
}

@article {DGT22a,
    AUTHOR = {Damiolini, Chiara and Gibney, Angela and Tarasca, Nicola},
     TITLE = {On factorization and vector bundles of conformal blocks from
              vertex algebras},
   JOURNAL = {Ann. Sci. \'Ec. Norm. Sup\'er. (4)},
  FJOURNAL = {Annales Scientifiques de l'\'Ecole Normale Sup\'erieure.
              Quatri\`eme S\'erie},
    VOLUME = {57},
      YEAR = {2024},
    NUMBER = {1},
     PAGES = {241--292},
      ISSN = {0012-9593,1873-2151},
   MRCLASS = {17B69 (14D21)},
  MRNUMBER = {4732678},
}

@inbook{DGT22b, place={Cambridge}, series={London Mathematical Society Lecture Note Series}, title={Vertex Algebras of CohFT-type}, volume={1}, DOI={10.1017/9781108877831.006}, booktitle={Facets of Algebraic Geometry: A Collection in Honor of William Fulton's 80th Birthday}, publisher={Cambridge University Press}, author={Damiolini, Chiara and Gibney, Angela and Tarasca, Nicola}, year={2022}, pages={164–189}, collection={London Mathematical Society Lecture Note Series}}

@incollection{Fak12, AUTHOR = {Fakhruddin, Najmuddin}, TITLE = {Chern classes of conformal blocks}, BOOKTITLE = {Compact moduli spaces and vector bundles}, SERIES = {Contemp. Math.}, VOLUME = {564}, PAGES = {145--176}, PUBLISHER = {Amer. Math. Soc., Providence, RI}, YEAR = {2012}, MRCLASS = {14C17 (14D23 81T40)}, MRNUMBER = {2894632}, MRREVIEWER = {Dmitry Kerner}, DOI = {10.1090/conm/564/11148}, URL = {https://doi.org/10.1090/conm/564/11148}, }

@book {FBZ04,
    AUTHOR = {Frenkel, Edward and Ben-Zvi, David},
     TITLE = {Vertex algebras and algebraic curves},
    SERIES = {Mathematical Surveys and Monographs},
    VOLUME = {88},
   EDITION = {Second},
 PUBLISHER = {American Mathematical Society, Providence, RI},
      YEAR = {2004},
     PAGES = {xiv+400},
      ISBN = {0-8218-3674-9},
   MRCLASS = {17B69 (14D21 14H81)},
  MRNUMBER = {2082709},
       DOI = {10.1090/surv/088},
       URL = {https://doi.org/10.1090/surv/088},
}

@article {Gia13,
    AUTHOR = {Giansiracusa, Noah},
     TITLE = {Conformal blocks and rational normal curves},
   JOURNAL = {J. Algebraic Geom.},
  FJOURNAL = {Journal of Algebraic Geometry},
    VOLUME = {22},
      YEAR = {2013},
    NUMBER = {4},
     PAGES = {773--793},
      ISSN = {1056-3911,1534-7486},
   MRCLASS = {14H10 (14L24)},
  MRNUMBER = {3084722},
MRREVIEWER = {Scott\ R.\ Nollet},
       DOI = {10.1090/S1056-3911-2013-00601-3},
       URL = {https://doi.org/10.1090/S1056-3911-2013-00601-3},
}

@article{GG12,
title = {The cone of type A, level 1, conformal blocks divisors},
journal = {Advances in Mathematics},
volume = {231},
number = {2},
pages = {798-814},
year = {2012},
issn = {0001-8708},
doi = {https://doi.org/10.1016/j.aim.2012.05.017},
url = {https://www.sciencedirect.com/science/article/pii/S0001870812002046},
author = {Noah Giansiracusa and Angela Gibney},
keywords = {Conformal blocks, Moduli of curves, GIT, Nef cone},
}

@article{NT05,
author = {Kiyokazu Nagatomo and Akihiro Tsuchiya},
title = {{Conformal field theories associated to regular chiral vertex operator algebras, I: Theories over the projective line}},
volume = {128},
journal = {Duke Mathematical Journal},
number = {3},
publisher = {Duke University Press},
pages = {393 -- 471},
year = {2005},
doi = {10.1215/S0012-7094-04-12831-3},
URL = {https://doi.org/10.1215/S0012-7094-04-12831-3}
}

@incollection {TUY89,
    AUTHOR = {Tsuchiya, Akihiro and Ueno, Kenji and Yamada, Yasuhiko},
     TITLE = {Conformal field theory on universal family of stable curves
              with gauge symmetries},
 BOOKTITLE = {Integrable systems in quantum field theory and statistical
              mechanics},
    SERIES = {Adv. Stud. Pure Math.},
    VOLUME = {19},
     PAGES = {459--566},
 PUBLISHER = {Academic Press, Boston, MA},
      YEAR = {1989},
      ISBN = {0-12-385342-7},
   MRCLASS = {81T40 (14H15 17B67 17B81 32G15)},
  MRNUMBER = {1048605},
MRREVIEWER = {Yukihiko\ Namikawa},
       DOI = {10.2969/aspm/01910459},
       URL = {https://doi.org/10.2969/aspm/01910459},
}
\end{document}